\newtheorem{thm}{Theorem}[section]
\newtheorem{lemma}[thm]{Lemma}
\newtheorem{prop}[thm]{Proposition}
\newtheorem{cor}[thm]{Corollary}
\theoremstyle{definition}
\newtheorem{defn}[thm]{Definition}
\newtheorem{rmk}[thm]{Remark}
\newcommand{\A}{\mathcal{A}}
\def\R{{\mathbb R}}
\def\C{{\mathbb C}}
\def\N{{\mathbb N}}
\def\T{{\mathbb T}}
\def\Z{{\mathbb Z}}
\def\so{_{\scriptscriptstyle O}}
\def\su{_{\scriptscriptstyle U}}
\def\pr{^{\scriptscriptstyle \R}}
\def\po{^{\scriptscriptstyle O}}
\def\pu{^{\scriptscriptstyle U}}
\def\sr{_{\scriptscriptstyle \R}}
\def\sc{_{\scriptscriptstyle \C}}
\def\crr{^{\scriptscriptstyle {\it CR}}}
\def\crt{^{\scriptscriptstyle {\it CRT}}}
\def\CRT{\mathcal{CRT}}
\def\CR{\mathcal{CR}}
\def\snf{\rm{SNF}}
\newcommand{\fj}{\mathfrak j}
\newcommand{\fJ}{\mathfrak J}
\newcommand{\fB}{\mathfrak B}
\newcommand{\en}{\operatorname{End}}
\newcommand{\sm}[4]
	{ \left( \begin{smallmatrix} {#1} & {#2} \\ {#3} & {#4} \end{smallmatrix} \right)  }
\newcommand{\smv}[2]
	{ \left( \begin{smallmatrix} {#1}  \\ {#2} \end{smallmatrix} \right)  }
\newcommand{\smh}[2]
	{ \left( \begin{smallmatrix} {#1} & {#2}  \end{smallmatrix} \right)  }
\def\id{\text {id} \,}
\def\im{\text {im} \,}
\def\coker{\text {coker} \,}
\newcommand\calg{$C\sp *$-algebra}
\newcommand\ctalg{$C\sp {*, \tau}$-algebra}
\begin{document}

\vspace{-1cm}

\title{$K$-theory for real $k$-graph $C\sp*$-algebras} 
\author{Jeffrey L. Boersema and Elizabeth Gillaspy}

\begin{abstract}
We initiate the study of real $C\sp*$-algebras associated to higher-rank graphs $\Lambda$, with a focus on their $K$-theory.  Following Kasparov and Evans, we identify a spectral sequence which computes the $\CR$ $K$-theory of $C^*\sr(\Lambda, \gamma)$ for any involution $\gamma$ on $\Lambda$, and show that the $E^2$ page of this spectral sequence can be straightforwardly computed from the combinatorial data of the $k$-graph $\Lambda$ and the involution $\gamma$. We provide a complete description of $K\crr(C\sp*\sr(\Lambda, \gamma))$ for several examples of higher-rank graphs $\Lambda$ with involution.
\end{abstract}

\maketitle

%%%%%%%%%%%%%%%%%%%%%%%%%%%%%%%%%%%%%%%%%%%

\section{Introduction}

Using  the classification of simple purely infinite real $C\sp*$-algebras  \cite{boersema-JFA, brs}, the first author together with Ruiz and Stacey established in \cite[Theorem 11.1]{brs} that for odd $n$, there are two distinct real $C\sp*$-algebras ($\mathcal E_n$ and $\mathcal O_n\pr$) whose complexification is the Cuntz algebra $\mathcal O_n$.  While $\mathcal O_n\pr$ is easy to describe in terms of generators and relations, %to date we still have no $C\sp*$-algebraic description of $\mathcal E_n$: beyond  its existence,
 the only facts known about $\mathcal E_n$ (beyond its existence) are its $K$-theory \cite[Theorem 11.1]{brs} and that it cannot arise as the real  $C\sp*$-algebra of any directed graph \cite[Theorem 6.1]{boersema-MJM}.  This latter fact is quite surprising, since $\mathcal O_n$ is one of the most straightforward examples of a graph $C\sp*$-algebra, and every directed graph gives rise to many potentially different real $C\sp*$-algebras. Indeed \cite{boersema-MJM}, any idempotent graph automorphism $\gamma$ on a graph $E$ gives rise to the real $C\sp*$-algebra $C\sp*\sr(E, \gamma)$ (see Equation \eqref{eq:real-C*-alg} below).

To date, much of the literature on real $C\sp*$-algebras has focused on their $K$-theory (cf.~\cite{schroderbook,boersema2002,brs,boersema-loring}), with some attention paid to other structural properties (cf.~\cite{boersema-kaplansky, rosenberg-real, stacey2003, boersema-stacey2005, boersema-ruiz2011}). In some sense, the $K$-theoretic data is enough:  \cite{boersema-JFA} explains how to construct a purely infinite simple real $C\sp*$-algebra with any appropriate specified $\CR$ $K$-theory, and the $\CR$ $K$-theory is known to be a classifying invariant for simple purely infinite real $C\sp*$-algebras \cite{brs}.  However, the construction in \cite{boersema-JFA} is quite layered and obtuse -- it allows us to detect the existence of real structures for given complex $C\sp*$-algebras, but does not otherwise shine a lot of light. We therefore wish to develop alternative constructions to help us generate more examples of real $C\sp*$-algebras in a concrete way. Specifically, as a test piece, we wish to find a concrete representation of the real $C\sp*$-algebras $\mathcal E_n$.

{To this end, we introduce} in this paper the real $C\sp*$-algebra $C\sp*\sr(\Lambda, \gamma)$ associated to a higher-rank graph $\Lambda$ and an involution $\gamma$ on $\Lambda$.  
Inspired by Robertson and Steger \cite{robertson-steger}, Kumjian and Pask introduced higher-rank graphs, or $k$-graphs,  in \cite{kp}, as a way to construct combinatorial examples of $C\sp*$-algebras which are more general than graph $C\sp*$-algebras.  
In addition to their intrinsic links with a variety of combinatorial structures, such as buildings \cite{robertson-steger,konter-vdovina} %, shifts of finite type \cite{carlsen-ruiz-sims}, 
and ultrametric Cantor sets \cite{FGJKP-wavelets, FGLP, kang-etc-ultrametrics},  (complex) $k$-graph $C\sp*$-algebras have provided important examples for   Elliott's classification program \cite{ruiz-sims-sorensen} 
as well as for noncommutative geometry \cite{pask-rennie-sims, FGJKP-wavelets}.

The family of real $C\sp*$-algebras that arise from higher-rank graphs with involution is much larger than the family arising from graph algebras. This follows, for example, from the fact that the  $K_1$-group of a (complex) graph $C^*$-algebra must be torsion free \cite{raeburn-szyman}, a restriction which disappears for higher-rank graphs.
However, in order to answer the question of whether the exotic Cuntz algebra $\mathcal E_n$ arises from a higher-rank graph, we need to be able to compute the $K$-theory of real higher-rank graph $C\sp*$-algebras, 
since $\mathcal E_n$ can only be identified by its $K$-theory. 
In this article we will develop the methods to carry out the computations of the $K$-theory of such algebras and will demonstrate these methods with several detailed computations.

While the $K$-theory of a graph $C\sp*$-algebra can be computed from the adjacency matrix of the graph using a long exact sequence (see \cite{bhrs} and \cite{raeburn-szyman}), the situation is more complicated for a  higher-rank graph $\Lambda$.  In \cite{evans}, Evans identified a spectral sequence which converges to the $K$-theory of $C\sp*(\Lambda)$, and computed the $K$-theory explicitly in some low-rank situations.  
Thus, we will first confirm that given the real $C\sp*$-algebra of a higher-rank graph, there exists a spectral sequence which converges to its $K$-theory.  This is the focus of Section \ref{sec:spectral-sequence}. For a real $C\sp*$-algebra $A$, the $K$-theoretic invariant that we consider contains much more information than does the $K$-theory of a complex $C\sp*$-algebra. We will consider the so-called $ \CR$ $K$-theory of $A$, which includes not only the 8 real $K$-groups $K_*(A)$, but also the two $K$-theory groups  $K_*(A_\C)$ of its complexification, as well as  a number of homomorphisms between the various groups that satisfy certain compatibility conditions. (See Section \ref{sec:CR} below.)  Fortunately, Theorem \ref{sp-seq1} confirms that the Evans spectral sequence is sufficiently functorial that it will indeed contain all of this additional structure.

Thus, the spectral sequence that we develop is simultaneously a generalization of the spectral sequence of Evans (for a complex higher rank graph algebra) and the long exact sequence developed in \cite{boersema-MJM} (for a real algebra from a graph with involution). 
Similar to the long exact sequence found in \cite{boersema-MJM}, the building blocks of our spectral sequence consist of direct sums of the $K$-theory of $\C$ and $\R$, viewed as real $C\sp*$-algebras. {Indeed, we show in Section \ref{sec:lowrank}  that the $E^2$ page of the spectral sequence can be computed from a chain complex, whose {entries} are the aforementioned direct sums of the $K$-theory of $\C$ and $\R$, and whose boundary maps} are determined by the combinatorial structure of the higher-rank graph. %The homology of this chain complex will then give us a precise description of the $E^2$ page of the spectral sequence in Section \ref{sec:lowrank}.   

When it comes to computing the $\CR$ $K$-theory of specific examples of real $C\sp*$-algebras, the complicated structure of real $K$-theory is both boon and bane.  While the intricacy of $\CR$ $K$-theory adds many additional steps to certain computations, the circumscribed relationships between the various groups (described in Section \ref{sec:CR}) mean that often, the entire $\CR$ $K$-theory is completely determined by just a few of its constituent groups and homomorphisms.  Consequently, as we see in Section \ref{sec:examples}, a small amount of information frequently enables us to completely describe the $\CR$ $K$-theory.  
To be precise, in Section \ref{sec:examples}, we use both the simplified description of the $E^2$ page of the spectral sequence from Section \ref{sec:lowrank}, and the relationships between the structure maps of $\CR$ $K$-theory, to completely describe the $\CR$ $K$-theory of several examples of rank-2 graphs. In particular, for each odd $n$, we identify in Section \ref{SectionExample2} a 2-graph $\Lambda$ and an involution $\gamma$ on $\Lambda$ such that $C\sp*(\Lambda) $ is $KK$-equivalent to $\mathcal O_n \otimes \mathcal O_n$, but {its real structure} $C\sp*_\R(\Lambda, \gamma)$ is not a tensor product of real Cuntz algebras. In other words, we have discovered {new} real structures on $\mathcal O_n \otimes \mathcal O_n$, other than $\mathcal O_n\pr \otimes\sr \mathcal O_n\pr$. 

{However, we have not yet discovered an example of a higher-rank graph with involution whose associated $C\sp*$-algebra is $\mathcal E_n$. We discuss this, and other open questions, in Section \ref{sec:questions}.}

\section{Preliminaries}
\label{sec:background}
\subsection{Higher-rank graphs and their (real) $C\sp*$-algebras}
\label{sec:kgraphs}
Higher-rank graphs were introduced by Kumjian and Pask in \cite{kp}.  To define them, we first specify that throughout this paper, we view $\N^k$ as a category with one object (namely $0$), where composition of morphisms is given by addition.

For consistency with the usual notation $n \in \N^k$ to describe a $k$-tuple of natural numbers (which, in the category-theoretic perspective, is a morphism in $\N^k$), we will write $\lambda \in \Lambda$ to denote a morphism in the category $\Lambda$.  We will identify a category's objects with the identity morphisms, so that statements such as $0 \in \N^k$ are still allowed.
\begin{defn}
A {\em higher-rank graph} of rank $k$, or a {\em $k$-graph}, is a countable small category $\Lambda$  equipped with a {\em degree functor} $d: \Lambda \to \N^k$ such that, whenever a morphism $\lambda \in \Lambda$ satisfies $d(\lambda) = m + n$, there exist unique morphisms $\mu, \nu \in \Lambda$ such that $\lambda = \mu \nu, d(\mu) = m, d(\nu) = n$. 
\label{def:k-graph}
\end{defn}

Equivalently (cf.~\cite[Theorems 4.3 and 4.4]{hazle-raeburn-sims-webster}, \cite[Theorem 2.3]{EFGGGP}) a $k$-graph consists of a directed graph $G$, with $k$ colors of edges, and a {\em factorization rule} $\sim$ on the multicolored paths.  For each pair of colors (``red'' and ``blue'' for this discussion), and each pair of vertices $v, w$, the factorization rule identifies each path from $v$ to $w$ which consists of a blue edge followed by a red edge (a {\em blue-red path}) with a unique red-blue path from $v$ to $w$.  The factorization rule must also satisfy certain consistency conditions which ensure that, for each path in the graph, its equivalence class  under $\sim$ corresponds to a $k$-dimensional hyper-rectangle.  

Notice that the factorization rule preserves the number of edges of each color in a given hyper-rectangle.  Thus, writing $G\sp*$ for the path category of $G$, and identifying the set  of  $k$ colors with the standard generators $e_1, \ldots, e_k$ of $\N^k$, we obtain a well-defined morphism $d: G\sp*/\sim \to \N^k$.  With this notation, $(G\sp*/\sim, d)$ is a $k$-graph as in Definition \ref{def:k-graph}.

Let $\Lambda$ be a $k$-graph.  Given objects $v, w \in \Lambda$ and   $n \in \N^k$, we write 
\begin{equation}
\Lambda^n = \{ \lambda \in \Lambda: d(\lambda) = n\}, \qquad v\Lambda = \{ \lambda \in \Lambda: r(\lambda) = v\}, \qquad \Lambda^n w = \{ \lambda \in \Lambda: s(\lambda) = w \text{ and } d(\lambda) = n\},
\label{eq:k-graph notation}
\end{equation}
as well as the obvious variations.  Observe that $\Lambda^0$ is the set of objects of $\Lambda$, which we also denote as {\em vertices} thanks to the graph-theoretic inspiration for $k$-graphs.  We say that $\Lambda$ is {\em row-finite} if $|v\Lambda^n | < \infty$ for all $n\in \N^k$ and $v \in \Lambda^0$, and that $\Lambda$ is {\em source-free} if, for all $n$ and $v$, $v\Lambda^n \not= \emptyset$.\footnote{Equivalently, $\Lambda$ is source-free if $v\Lambda^{e_i}$ is nonempty for all $1 \leq i \leq k$.}

\begin{defn}\cite{kp}
Given a row-finite source-free $k$-graph $\Lambda$, a {\em Cuntz--Krieger $\Lambda$-family} is a collection $\{t_\lambda\}_{\lambda \in \Lambda}$ of partial isometries  in a $C\sp*$-algebra $A$ which satisfy the following conditions:
\begin{enumerate}
\item[(CK1)] For each $v \in \Lambda^0$, $t_v$ is a projection, and $t_v t_w = \delta_{v,w} t_v$.
\item[(CK2)] For each $\lambda \in \Lambda, \ t_\lambda^* t_\lambda = t_{s(\lambda)}$.
\item[(CK3)] For each $\lambda, \mu \in \Lambda, \ t_\lambda t_\mu = t_{\lambda \mu}$.
\item[(CK4)] For each $v \in \Lambda^0$ and each $n \in \N^k$, 
\[ t_v = \sum_{\lambda \in v\Lambda^n} t_\lambda t_\lambda^*.\]
\end{enumerate}
We define $C\sp*(\Lambda)$ to be the universal (complex) $C\sp*$-algebra generated by a Cuntz--Krieger family, in the sense that for any Cuntz--Krieger $\Lambda$-family $\{t_\lambda\}_{\lambda \in \Lambda}$, there is a surjective $*$-homomorphism $C\sp*(\Lambda) \to C\sp*(\{ t_\lambda\}_\lambda)$.  
\end{defn}
We write $\{s_\lambda\}_{\lambda \in \Lambda}$ for the generators of $C\sp*(\Lambda)$.
One computes easily, using the Cuntz--Krieger relations, that $C\sp*(\Lambda) = \overline{\text{span}} \{ s_\lambda s_\mu^*: s(\lambda) = s(\mu)\}.$

Given a $k$-graph $\Lambda$, we now describe how to associate a real $C\sp*$-algebra to it. We assume that $\Lambda$ is row-finite and has no sources.
Observe that there is a (unique) anti-multiplicative linear automorphism $\chi$ of $C \sp * (\Lambda)$ which satisfies $\chi(s_\lambda) =  s_\lambda^*$.

\begin{defn}
An {\em involution} $ \gamma$ on a $k$-graph $\Lambda$ is a degree-preserving functor $\gamma \colon \Lambda \rightarrow \Lambda$ which satisfies 
$ \gamma \circ  \gamma = \id_{\Lambda}$. 
\label{def:involution}
\end{defn}
The functoriality of $ \gamma$ implies that $s   \gamma =  \gamma  s$ and $ r   \gamma =  \gamma  r$ for any involution $ \gamma$.

Given an involution $ \gamma$ on $\Lambda$, the elements $\{s_{ \gamma(\lambda)}: \lambda \in \Lambda\}$ form a Cuntz--Krieger $\Lambda$-family, so the universal property of $C \sp*(\Lambda)$ implies the existence of an automorphism $C\sp*(\gamma)$ on $C \sp*(\Lambda)$, given by 
$C\sp*(\gamma)(s_\lambda) := s_{ \gamma(\lambda)}.$
 Since $\chi$ commutes with $C\sp*(\gamma)$, the composition $\widetilde \gamma := \chi \circ C\sp*( \gamma)$ is an antimultiplicative involution of $C \sp *(\Lambda)$, which is determined uniquely by the formula 
 \[ \widetilde \gamma(s_\lambda) = s^*_{ \gamma(\lambda)}.\]

It follows that $(C \sp *(\Lambda); \widetilde \gamma)$ is a \ctalg 
~(this just means exactly that $\widetilde \gamma$ is an antiautomorphism of $C \sp *(\Lambda)$). The corresponding real \calg ~is given by 
\begin{equation}
\label{eq:real-C*-alg}
C\sr \sp*(\Lambda, \gamma): = \{a \in C \sp *(\Lambda) \mid \widetilde \gamma(a) = a^* \} \; 
\end{equation}
(see Definition~1.1.4 of \cite{schroderbook} and the following remark).

\begin{lemma}
\label{lem:real-alg-elts}
Given an involution $ \gamma$ on a row-finite source-free $k$-graph $\Lambda$, 
\[C\sr^*(\Lambda, \gamma) = \overline{\rm{span}}_\R \{ z s_\lambda s_\mu^* + \overline{z} s_{ \gamma(\lambda)} s_{ \gamma(\mu)}^* \mid z \in \C, \lambda, \mu \in \Lambda\}.\]
\end{lemma}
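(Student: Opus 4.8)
The plan is to show the set on the right-hand side---call it $B$---is both contained in $C^*_\R(\Lambda,\gamma)$ and dense in it. The containment $B \subseteq C^*_\R(\Lambda,\gamma)$ is the easy direction: since $C^*_\R(\Lambda,\gamma)$ is a (norm-closed) real linear subspace, it suffices to check that each generator $z s_\lambda s_\mu^* + \overline z\, s_{\gamma(\lambda)}s_{\gamma(\mu)}^*$ is fixed by $a \mapsto \widetilde\gamma(a)^*$. Using that $\widetilde\gamma$ is antimultiplicative with $\widetilde\gamma(s_\nu) = s^*_{\gamma(\nu)}$, one computes $\widetilde\gamma(z s_\lambda s_\mu^*) = \overline z\, \widetilde\gamma(s_\mu^*)\widetilde\gamma(s_\lambda) = \overline z\, s_{\gamma(\mu)}s^*_{\gamma(\lambda)}$, and taking adjoints gives $\overline z\, s_{\gamma(\lambda)}s^*_{\gamma(\mu)}$; similarly $\widetilde\gamma(\overline z\, s_{\gamma(\lambda)}s^*_{\gamma(\mu)})^* = z s_\lambda s_\mu^*$ using $\gamma\circ\gamma = \id$. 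Hence the sum is fixed, so $B \subseteq C^*_\R(\Lambda,\gamma)$, and since the latter is norm-closed, $\overline B$ lies in it too.

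For the reverse inclusion, start from the known description $C^*(\Lambda) = \overline{\operatorname{span}}_\C\{s_\lambda s_\mu^* : s(\lambda)=s(\mu)\}$ recorded in the excerpt. Given $a \in C^*_\R(\Lambda,\gamma) \subseteq C^*(\Lambda)$ and $\varepsilon > 0$, choose a finite $\C$-linear combination $x = \sum_j z_j s_{\lambda_j}s_{\mu_j}^*$ with $\|a - x\| < \varepsilon$. The idea is to symmetrize: the operator $P(b) := \tfrac12\bigl(b + \widetilde\gamma(b)^*\bigr)$ is an $\R$-linear contractive projection of $C^*(\Lambda)$ onto $C^*_\R(\Lambda,\gamma)$ (contractive because $\widetilde\gamma$ is isometric and $*$ is isometric; a projection because $\widetilde\gamma(a)^* = a$ on the range and $\widetilde\gamma\circ\widetilde\gamma = \id$, $(\widetilde\gamma(b)^*)^{\,\widetilde\gamma\,*} = b$). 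Since $P(a) = a$, we get $\|a - P(x)\| = \|P(a-x)\| \le \|a - x\| < \varepsilon$. Now $P(x) = \tfrac12\sum_j\bigl(z_j s_{\lambda_j}s_{\mu_j}^* + \overline{z_j}\, s_{\gamma(\lambda_j)}s_{\gamma(\mu_j)}^*\bigr)$ by the computation above, which is manifestly an $\R$-linear combination of the displayed generators of $B$. Therefore $a \in \overline B$, completing the argument.

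The only mild subtlety---and the step I would be most careful about---is verifying that $P$ is genuinely a well-defined $\R$-linear projection onto $C^*_\R(\Lambda,\gamma)$ with $\|P\| \le 1$: one must note $P$ is only $\R$-linear (not $\C$-linear, since $\widetilde\gamma(b)^*$ is conjugate-linear in $b$ via $\chi$), check $P(b) \in C^*_\R(\Lambda,\gamma)$ for all $b$ using antimultiplicativity and the involutive property of $\widetilde\gamma$, and check $P$ restricts to the identity on $C^*_\R(\Lambda,\gamma)$ directly from the defining equation \eqref{eq:real-C*-alg}. None of this is hard, but it is where the definitions must be used correctly. Alternatively, one can avoid $P$ entirely and argue by hand: write $x = P(x) + \tfrac12(x - \widetilde\gamma(x)^*)$, observe the second summand is ``anti-fixed,'' and use that $a$ is fixed to bound $\|a - P(x)\|$; this is the same computation unpacked. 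I would present the projection version for brevity.
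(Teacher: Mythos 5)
Your proof is correct, and its second half packages the density argument differently from the paper. The paper, after the same easy verification that the displayed generators satisfy $\widetilde\gamma(a)=a^*$, finishes by showing $A+iA=C\sp*(\Lambda)$: it decomposes each $\alpha s_\lambda s_\mu^*$ explicitly as $(z s_\lambda s_\mu^*+\overline z s_{\gamma(\lambda)}s_{\gamma(\mu)}^*)+i(w s_\lambda s_\mu^*+\overline w s_{\gamma(\lambda)}s_{\gamma(\mu)}^*)$ with $z=\alpha/2$, $w=-i\alpha/2$, and then concludes $A=C\sr^*(\Lambda,\gamma)$ (implicitly using that the real form $B$ satisfies $B\cap iB=0$). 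You instead symmetrize with the contractive $\R$-linear projection $P(b)=\tfrac12\bigl(b+\widetilde\gamma(b)^*\bigr)$ onto the real form and push a dense spanning set of $C\sp*(\Lambda)$ through $P$; this is really the same underlying computation (your $P$ applied to $\alpha s_\lambda s_\mu^*$ and to $-i\alpha s_\lambda s_\mu^*$ reproduces the paper's $z$ and $w$), but your route makes the completeness/closure step explicit via $\|a-P(x)\|\le\|a-x\|$, at the cost of having to justify that $b\mapsto\widetilde\gamma(b)^*$ is an isometric conjugate-linear $*$-automorphism, while the paper's route trades that for the (also standard, and left implicit) fact that a real form meets its multiple by $i$ only in $0$. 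One bookkeeping slip to fix: $\widetilde\gamma$ is $\C$-linear (it is $\chi\circ C\sp*(\gamma)$), so $\widetilde\gamma(z s_\lambda s_\mu^*)=z\,s_{\gamma(\mu)}s_{\gamma(\lambda)}^*$ without a conjugate; the bar appears only after taking the adjoint, giving $\widetilde\gamma(z s_\lambda s_\mu^*)^*=\overline z\,s_{\gamma(\lambda)}s_{\gamma(\mu)}^*$. In your write-up the conjugate is introduced one step too early and then not applied when taking adjoints, so the two slips cancel and your final formula (hence the rest of the argument) is correct as stated.
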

\begin{proof}
Define $A$ to be the right hand side.
We first observe that 
\[ (z s_\lambda s_\mu^* + \overline{z} s_{ \gamma(\lambda)} s_{ \gamma(\mu)}^*)^* = \overline z s_\mu s_\lambda^*  + z s_{ \gamma(\mu)} s_{ \gamma(\lambda)}^*,\]
whereas the fact that $\widetilde \gamma$ is antimultiplicative but linear  implies that  we also have 
\[ \widetilde \gamma(z s_\lambda s_\mu^* + \overline{z} s_{ \gamma(\lambda)} s_{ \gamma(\mu)}^*) = z s_{ \gamma(\mu)} s_{ \gamma(\lambda)}^* + \overline{z} s_{\mu} s_{\lambda}^*.\]
Hence $A \subseteq C\sr^*(\Lambda, \gamma)$.

To see that $A \cong C\sr^*(\Lambda, \gamma)$, we will show that $A + i A = C\sp*(\Lambda)$.  To that end, fix $\alpha \in \C$ and consider $\alpha s_\lambda s_\mu^* \in C\sp*(\Lambda)$.  If we set $z = \alpha/2, w = - i \alpha/2$,  a quick computation reveals that 
\[ z s_\lambda s_\mu^* + \overline{z} s_{ \gamma(\lambda)} s_{ \gamma(\mu)}^* + i ( w s_\lambda s_\mu^* + \overline{w} s_{ \gamma(\lambda)} s_{ \gamma(\mu)}^*) = \alpha s_\lambda s_\mu^*.\]
As the elements $\alpha s_\lambda s_\mu^*$ densely span $C\sp*(\Lambda)$ as a real vector space, we conclude that 
$A + i A = C\sr^*(\Lambda,\gamma)$ as claimed.
\end{proof}

To each $k$-graph we can associate $k$ commuting matrices $M_1, \ldots, M_k$ in $M_{\Lambda^0}(\N)$:
\begin{equation}
\label{eq:matrices}
M_i(v, w) := | v\Lambda^{e_i} w|,
\end{equation}
that is, the $(v, w)$ entry in $M_i$ counts the number of color-$i$ edges in $\Lambda$ with source $w$ and range $v$.  We call the matrices $M_i$ the {\em incidence matrices} or {\em adjacency matrices} of the $k$-graph. The fact that $M_i M_j = M_j M_i$ follows from the requirement, imposed by the factorization rule, that there be an identical number of blue-red as red-blue paths between any given pair $(v, w)$ of vertices.

Given a $k$-graph $\Lambda$, we can form the {\em skew product} $\Lambda \times_d \Z^k$, with $\text{Obj}(\Lambda \times_d \Z^k) = \Lambda^0 \times \Z^k$ and $\text{Mor}(\Lambda \times_d \Z^k) = \Lambda \times \Z^k$.  We have $s(\lambda, n) = (s(\lambda), n + d(\lambda))$ and $r(\lambda, n) = (r(\lambda), n)$.  Defining $d: \Lambda \times_d \Z^k \to \N^k$ by $d(\lambda, n) = d(\lambda)$ makes $\Lambda \times_d \Z^k$ a $k$-graph, which is row-finite and source-free whenever $\Lambda$ is.  Moreover, \cite[Theorem 5.7]{kp} the universal property of $C\sp*(\Lambda \times_d \Z^k)$ implies that $C\sp*(\Lambda \times_d \Z^k)$ 
%(and consequently $C\sr^*(\Lambda \times_d \Z^k)$) 
admits an action of $\Z^k$, given on the generators by $s_{\lambda, n} \cdot m := s_{\lambda, m+n}$.

\subsection{$K$-theory for real $C\sp*$-algebras}
\label{sec:CR}
The main $K$-theoretic invariant that we will use in the category of real $C\sp*$-algebras is $\CR$ $K$-theory, denoted by $K\crr(A)$ for a real $C\sp*$-algebra $A$. In this section, we will review the definition and the properties of this invariant, necessary for the rest of this article. The components of this invariant are described in Section 1.4 of \cite{schroderbook}. It is a somewhat smaller invariant than ``united $K$-theory,'' denoted $K\crt(A)$, described in Section 1 of \cite{boersema2002} and utilized in \cite{boersema2004} and \cite{brs}; 
but by \cite[Theorem 4.2.1]{hewitt} it contains equivalent information.

For a real $C\sp*$-algebra $A$, we define $K\crr(A) = \{ KO_*(A), KU_*(A) \}$ where $KO_*(A)$ is the usual period-8 $K$-theory of $A$ and $KU_*(A) := K_*(\C \otimes A)$ is the $K$-theory of the complexification of $A$. In addition, $KO_*(A)$ has the structure of a graded module over the ring $KO_*(\R)$ where the groups of this ring are given by
\[ KO_*(\R) = \Z \quad \Z_2 \quad \Z_2 \quad 0  \quad \Z \quad 0 \quad 0 \quad 0 \]
in degrees 0 through 7. 

In particular, 
multiplication by the non-trivial element $\eta$ of $KO_1(\R) \cong \Z_2$
 induces a natural transformation
$\eta \colon KO_i(A) \rightarrow KO_{i+1}(A) .$
We note that $\eta$ satisfies the relations $2 \eta = 0$ and $\eta^3 = 0$, both as an element of the ring $KO_*(\R)$ and as a natural transformation. There is also a non-trivial element $\xi \in KO_4(\R)$, and corresponding natural transformation, that satisfies $\xi^2 = 4 \beta\so$ where $\beta\so$ is the real Bott periodicity isomorphism of degree 8.

Complex $K$-theory $KU_*(A)$ has the structure of a module over $KU_*(\R) = K_*(\C)$, but the only natural transformation which arises from this structure is the degree-2 Bott periodicity map $\beta$. There is, however, a natural transformation $\psi \colon KU_*(A) \rightarrow KU_*(A)$ that arises from the conjugation map $\psi \colon \C \otimes A \rightarrow \C \otimes A$ defined by $a + ib \mapsto a - ib$.

In addition, there are natural transformations
\begin{align*}
c \colon & KO_*(A) \rightarrow KU_*(A) \\
r \colon & KU_*(A) \rightarrow KO_*(A) 
\end{align*}
which are induced by the natural inclusion maps $\R \hookrightarrow \C$ and $\C \hookrightarrow M_2(\R)$, respectively. 

Taken together, these natural transformations satisfy the following set of relations:
\begin{align}
\label{eq:natural-transformations}
rc &= 2  & cr &= 1 + \psi &2 \eta &= 0  \notag \\
\eta r &= 0 & c \eta&= 0 &  \eta^3 &= 0    \notag \\
r \psi &= r & \psi^2 &= \id &  \xi^2 &= 4 \beta\so \\ 
\psi c &= c & \psi \beta &= -\beta\su \psi & \xi &= r \beta^2 c \notag 
\end{align} 

A pair $(G^O, G^U)$ of $\Z$-graded abelian groups ($G^O$ with period 8 and $G^U$ with period 2) together with natural transformations $\eta, \beta, \zeta,\psi, r, c$ as above, such that the equations \eqref{eq:natural-transformations} hold, is called a {\em $\CR$-module}, and the category of such objects is the target of the functor $K\crr(A)$. 

We display the full structure of $K\crr(\R)$ and $K\crr(\C)$ in Tables \ref{crtR} and \ref{crtC} below. These are the only two singly-generated free $\CR$-modules (up to suspensions) and all of the relations above are encoded in these two $\CR$-modules. Furthermore, these $\CR$-modules will be the building blocks of the spectral sequence we will use to compute $K\crr( C\sp*\sr(\Lambda, \gamma))$.  (See Theorem \ref{sp-seq2} and Section \ref{sec:lowrank} below.)

\begin{table}[h]
\caption{$K\crr(\R)$} \label{crtR}
\[ \begin{array}{|c|c|c|c|c|c|c|c|c|c|}  
\hline \hline  
n & ~0~ & ~1~ & ~2~ & ~3~ & ~4~ & ~5~ & ~6~ & ~7~  \\
\hline  \hline
KO_n
& \Z  & \Z_2  & \Z_2  & 0 
	& \Z  & 0 & 0 & 0  \\
\hline  
KU_n 
& \Z  & 0 & \Z  & 0 & \Z   & 0 
	& \Z  & 0   \\
\hline \hline
\eta_n & 1 & 1 & 0 & 0 & 0 & 0 & 0 & 0 \\
\hline 
c_n & 1 & 0 & 0 & 0 & 2 & 0 & 0 & 0   \\
\hline
r_n & 2 & 0 & 1 & 0 & 1 & 0 & 0 & 0      \\
\hline
\psi_n & 1 & 0 & -1 & 0 & 1 & 0 & -1 & 0    \\
\hline \hline
\end{array} \]
\end{table}

\begin{table} 
\caption{$K\crr(\C)$}  \label{crtC}
\[ \begin{array}{|c|c|c|c|c|c|c|c|c|c|}  
\hline  \hline 
n & \makebox[1cm][c]{0} & \makebox[1cm][c]{1} & 
\makebox[1cm][c]{2} & \makebox[1cm][c]{3} 
& \makebox[1cm][c]{4} & \makebox[1cm][c]{5} 
& \makebox[1cm][c]{6} & \makebox[1cm][c]{7}  \\
\hline  \hline
KO_n 
& \Z & 0 & \Z & 0 & \Z & 0 & \Z & 0   \\
\hline  
KU_n 
& \Z \oplus \Z & 0 & \Z \oplus \Z & 0 & \Z \oplus \Z & 0 
	& \Z \oplus \Z  & 0   \\
\hline \hline
\eta_n & 0 & 0 & 0 & 0 & 0 & 0 & 0 & 0 \\
\hline 
c_n & \smv{1}{1} & 0 & \smv{-1}{1} & 0 & \smv{1}{1} & 0 & \smv{-1}{1} & 0 
	   \\
\hline
r_n & \smh{1}{1} & 0 & \smh{-1}{1} & 0 & \smh{1}{1} & 0 
	& \smh{-1}{1} & 0      \\
\hline
\psi_n & \sm{0}{1}{1}{0} & 0 & \sm{0}{-1}{-1}{0} & 0 & 
	\sm{0}{1}{1}{0} & 0 & \sm{0}{-1}{-1}{0} & 0    \\
\hline \hline
\end{array} \]
\end{table}

The natural transformations also combine to form a long exact sequence
\begin{equation}
\label{eq:CR-LES}
\dots \rightarrow
KO_{i}( A) \xrightarrow{\eta} 
KO_{i+1}( A) \xrightarrow{c} 
KU_{i+1}( A) \xrightarrow{r \beta^{-1}} 
KO_{i-1}(A) \rightarrow
\cdots  \; .
\end{equation}
The following theorem summarizes some of the important properties of the invariant $K\crr(A)$. 

\begin{thm} \label{CR Theorems} ~
\begin{enumerate}
\item If $A$ is a real $C\sp*$-algebra, then $K\crr(A)$ is a $\CR$-module.
\item If $A$ and $B$ are real $C\sp*$-algebras, such that $\C \otimes A$ and $\C \otimes B$ are in the bootstrap category $\mathcal{N}$, then
$A$ and $B$ are $KK$-equivalent if and only if $K\crr(A) \cong K\crr(B)$.
\item If $A$ and $B$ are real $C\sp*$-algebras, such that $\C \otimes A$ and $\C \otimes B$ are purely infinite simple Kirchberg algebras, then
$A$ and $B$ are stably isomorphic if and only if $K\crr(A) \cong K\crr(B)$.
\item If $A$ and $B$ are real $C\sp*$-algebras, such that $\C \otimes A$ and $\C \otimes B$ are purely infinite simple unital Kirchberg algebras, then
$A$ and $B$ are isomorphic if and only if $(K\crr(A), [1_A]) \cong (K\crr(B), [1_B])$.
\end{enumerate}
\end{thm}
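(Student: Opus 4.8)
The plan is to assemble the four assertions from results already in the literature, the only genuine bridge being Hewitt's comparison \cite[Theorem 4.2.1]{hewitt} between $\CR$ $K$-theory and united $K$-theory $K\crt$, which lets us import statements proved for the larger invariant. I would present the argument essentially as a sequence of citations, organized by part.

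For (1), I would verify that the groups $KO_*(A)$, $KU_*(A)$ together with the transformations $c, r, \eta, \psi, \beta$ and the module structures over $KO_*(\R)$ and $K_*(\C)$ satisfy the relations \eqref{eq:natural-transformations}. Each relation is either classical for $KO$ and $KU$ of a point (the ring structure of $KO_*(\R)$, Bott periodicity, the interaction of complexification and realification) or is recorded in \cite[\S1.4]{schroderbook}; naturality in $A$ is automatic, since every transformation is induced by a fixed $*$-homomorphism or by cup product with a fixed $K$-theory class. Hence $K\crr(A)$ lands in the category of $\CR$-modules.

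For (2), the ``only if'' direction is immediate from functoriality of $K\crr(-)$ under $KK$. For ``if'', I would invoke the real Universal Coefficient Theorem: when $\C\otimes A$ and $\C\otimes B$ lie in the bootstrap class $\mathcal N$, the algebras $A$ and $B$ lie in the real bootstrap class, and there is a UCT/Künneth spectral sequence (Boersema \cite{boersema2002,boersema2004}) computing $KK_*(A,B)$ from $\hom$ and $\ext$ of the united $K$-theory modules over the coefficient ring $KK\scrt(\R,\R)$. An isomorphism $K\crr(A)\cong K\crr(B)$ upgrades, via Hewitt's equivalence, to an isomorphism of united $K$-theory modules, which the UCT lifts to an invertible class in $KK(A,B)$; thus $A$ and $B$ are $KK$-equivalent. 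For (3) and (4), I would then combine (2) with the classification of real Kirchberg algebras: by (2), an isomorphism of $K\crr$ (respectively, one also matching $[1_A]$ with $[1_B]$) yields a $KK$-equivalence, and \cite{brs} (building on \cite{boersema-JFA}) establishes the real analogue of the Kirchberg--Phillips theorem, namely that real $C\sp*$-algebras whose complexifications are stable (resp. unital) purely infinite simple Kirchberg algebras are stably isomorphic (resp. isomorphic) precisely when they are $KK$-equivalent (resp. $KK$-equivalent via a class carrying $[1_A]$ to $[1_B]$); the converse directions follow from functoriality together with the observations that stable isomorphism preserves $K\crr$ and unital isomorphism preserves the pointed invariant.

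The main obstacle I anticipate is not any single computation but the care required at the interfaces: (i) checking that $\C\otimes A\in\mathcal N$ forces $A$ into whatever real bootstrap class the real UCT demands, and (ii) pinning down which precise form of the real UCT/Künneth machinery applies, and confirming that an abstract isomorphism of $\CR$-modules (not merely of united $K$-theory $K\crt$) already suffices via Hewitt's theorem. Everything downstream — the real Kirchberg--Phillips input underlying (3) and (4) — can then be treated as a black box from \cite{brs}.
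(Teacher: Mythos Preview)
Your proposal is correct and follows essentially the same route as the paper: the Hewitt bridge \cite[Theorem~4.2.1]{hewitt} between $K\crr$ and $K\crt$, combined with the known $K\crt$ results from \cite{boersema2004} and \cite{brs}, is exactly how the paper handles (2)--(4). The only minor difference is in (1): rather than verifying the relations \eqref{eq:natural-transformations} directly, the paper observes that $K\crt(A)$ is already known to be a $\CRT$-module by \cite[Theorem~1.12]{boersema2002}, and the $\CR$-module structure is simply the restriction of that---so (1) is immediate without any checking.
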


\begin{proof}
From
\cite[Theorem~1.12]{boersema2002}, we know that $K\crt(A)$ is a $\CRT$-module, from which it follows immediately that $K\crr(A)$ is a $\CR$-module.
By
 \cite[Corollary~4.11]{boersema2004} and \cite[Theorem~10.2]{brs}, 
we know that statements (2), (3), and (4) are true when $K\crr(-)$ is replaced by $K\crt(-)$ throughout.
However, from \cite[Theorem~4.2.1]{hewitt}, we know that $K\crr(A) \cong K\crr(B)$ if and only if $K\crt(A) \cong K\crt(B)$.

\end{proof}

From the point of view of calculations, it is often the case that once $KU_*(A)$ and a few of the $KO_*(A)$ groups are known, % up to isomorphism,
 then the rest can be identified using the rich structure of a $\CR$-module, specifically the relations among the natural transformations above combined with the long exact sequence.
 In \cite{hewitt}, Beatrice Hewitt found a way to boil down the information from an acyclic $\CR$-module into a simpler structure, called the {\it core} of the $\CR$-module. We will introduce this helpful structure in Section~\ref{sec:examples} and use it to facilitate calculations of the $K$-theory of some specific higher-rank graph algebras.

\section{The spectral sequence}
\label{sec:spectral-sequence}

This section, which is the theoretical cornerstone of the paper, takes inspiration from Evans' computations \cite{evans} of $K$-theory for the complex $C\sp*$-algebras of higher-rank graphs.  Our goal is to obtain a computable description of the spectral sequence which converges to $K\crr(C\sp*\sr(\Lambda, \gamma))$.  The spectral sequence in question was introduced by Kasparov in \cite[6.10]{kasparov-equivKK} and applies to crossed product $C\sp*$-algebras.  Thus, we begin by showing in Theorem \ref{B_times_Z} that $C\sp*\sr(\Lambda, \gamma)$ is stably isomorphic to $C\sp*\sr(\Lambda \times_d \Z^k, \gamma) \rtimes \Z^k$. 
Next, we establish (see Theorem \ref{sp-seq1}) that Kasparov's spectral sequence \cite{kasparov-equivKK} encodes not only the real and complex $K$-theory groups, but also the $\mathcal{CR}$-module structure linking them.  
Having thus established the relevance of Kasparov's spectral sequence to our situation, in Section \ref{sec:combinatorial}
we combine the AF structure of $C\sp*\sr(\Lambda \times_d \Z^k, \gamma) $ (Corollary \ref{cor:AF}) and its $\Z^k$-module structure to provide a more combinatorial description of the $E^2$ page of the spectral sequence in Theorem \ref{sp-seq2}.  Namely, we identify a chain complex whose homology computes  the $E^2$ page of the spectral sequence.  Our approach here follows the outline used by Evans for complex $C\sp*$-algebras in \cite{evans}, although the intricate structure of real $K$-theory necessitates a few detours.

Thanks to the AF structure of $C\sp*\sr(\Lambda \times_d \Z^k, \gamma)$, the building blocks of this chain complex are direct sums of the $K$-theory of the two most basic real $C\sp*$-algebras, namely $\R$ and $\C$.  In this situation, Lemma \ref{CRMagic}  establishes that the entire $\mathcal{CR}$-module structure is dictated by what happens at the level of the complex $K$-theory.  Combining this insight with Evans'  computations of the $K$-theory of complex $k$-graph $C\sp*$-algebras, we provide in  Section \ref{sec:lowrank}  a more explicit description of the $E^2$ page of the spectral sequence for $k$-graphs with $k \leq 3$ and finitely many vertices.  This description is fundamental to our analysis of the examples in Section \ref{sec:examples}.

\subsection{Structure of $C\sr^*(\Lambda, \gamma)$}

Given a $k$-graph $(\Lambda,  \gamma)$ with involution, we can extend $ \gamma$  to an involution (also denoted $ \gamma$) on the skew-product $k$-graph $\Lambda \times_d \Z^k$ by the formula
\[ {\gamma} (\mu, n) = ( \gamma(\mu), n) \; . \]
The involution $\gamma$ thus induces a real structure on the complex $C\sp*$-algebra $B = C\sp*(\Lambda \times_d \Z^k)$; we will write 
\[B\sr = C\sr^*(\Lambda \times_d \Z^k, \gamma)\]
 for the associated real $C\sp*$-algebra.
Recall that there is an action $\beta$ of $\Z^k$ on $B$, given by $\beta(n) \cdot s_{\mu, m} = s_{\mu, m+n}$. Using the description of $B\sr$ which arises from Lemma~\ref{lem:real-alg-elts}, it is easy to see that the action $\beta$ restricts to an action (also denoted $\beta$) of $\Z^k$ on $B\sr$. We will also use the notation $\beta_i = \beta(e_i)$ for $i \in \{1, 2, \dots, k\}$.

\begin{thm} \label{B_times_Z}
There is an isomorphism 
\[ B\sr \rtimes_\beta \Z^k \cong C\sr^*(\Lambda,  \gamma) \otimes\sr \mathcal{K}\sr \; \]
and hence
\[ K\crr (C\sr^*(\Lambda,  \gamma)) \cong K\crr(B\sr \rtimes_\beta \Z^k) \; .\]
\end{thm}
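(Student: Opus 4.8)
The plan is to follow the classical Green--Julg / Packer--Raeburn strategy for realizing a $k$-graph algebra as a crossed product of its skew-product algebra, and then to upgrade that argument to the real setting. First I would recall from \cite[Theorem~5.7]{kp} (or the cocycle formulation in the literature) that in the complex case there is an isomorphism $C\sp*(\Lambda\times_d\Z^k)\rtimes_\beta \Z^k \cong C\sp*(\Lambda)\otimes\mathcal K(\ell^2(\Z^k))$. This is proved by exhibiting a covariant representation: one writes down a Cuntz--Krieger $\Lambda$-family inside the multiplier algebra of $C\sp*(\Lambda\times_d\Z^k)\rtimes_\beta\Z^k$ together with a unitary representation of $\Z^k$ implementing $\beta$, invokes the universal properties on both sides to get mutually inverse homomorphisms, and identifies the image with $C\sp*(\Lambda)\otimes\mathcal K$ via the standard isomorphism $C\sp*(\N^k\times_d\Z^k)\rtimes\Z^k\cong\mathcal K$ on the $\Z^k$-variable. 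The key formulas are $s_\lambda \mapsto \sum_{n\in\Z^k} s_{\lambda,n}\, u_{?}$-type expressions familiar from the skew-product picture.

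Next I would check that this isomorphism is equivariant for the relevant antiautomorphisms, so that it restricts to the real subalgebras. Concretely, $\widetilde\gamma$ on $C\sp*(\Lambda)$, the antiautomorphism $\widetilde\gamma$ on $B=C\sp*(\Lambda\times_d\Z^k)$ (coming from $\gamma(\mu,n)=(\gamma(\mu),n)$), and the transpose-type involution on $\mathcal K\sr = \mathcal K_\R$ must all be compatible. One has to verify that the action $\beta$ commutes with $\widetilde\gamma$ in the appropriate sense — since $\gamma$ is degree-preserving and acts as the identity on the $\Z^k$-coordinate, $\widetilde\gamma\circ\beta(n) = \beta(-n)\circ\widetilde\gamma$ or $\beta(n)\circ\widetilde\gamma$ depending on conventions — and that this makes the crossed-product antiautomorphism match, under the isomorphism, the antiautomorphism on $C\sp*(\Lambda)\otimes\mathcal K$ given by $\widetilde\gamma\otimes(\text{transpose})$. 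Granting this, taking fixed-point subalgebras $\{a: \widetilde\gamma(a)=a^*\}$ on both sides (using Lemma~\ref{lem:real-alg-elts} to describe them on generators, and the fact that taking the real part of a $C\sp{*,\tau}$-algebra is functorial for equivariant maps) yields $B\sr\rtimes_\beta\Z^k \cong C\sr^*(\Lambda,\gamma)\otimes\sr \mathcal K\sr$. The second, displayed isomorphism is then immediate: $\CR$ $K$-theory is a stable invariant (tensoring with $\mathcal K\sr$ induces an isomorphism on $K\crr$, since it does so on both $KO_*$ and $KU_*$ and is natural, hence respects the $\CR$-module structure), so
\[ K\crr(C\sr^*(\Lambda,\gamma)) \cong K\crr(C\sr^*(\Lambda,\gamma)\otimes\sr\mathcal K\sr) \cong K\crr(B\sr\rtimes_\beta\Z^k). \]

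The main obstacle I anticipate is purely bookkeeping but genuinely delicate: tracking the interaction of the three antilinear/antimultiplicative maps through the complex isomorphism. The isomorphism $C\sp*(\Lambda\times_d\Z^k)\rtimes\Z^k\cong C\sp*(\Lambda)\otimes\mathcal K$ involves a choice of identification of $\mathcal K$ with operators on $\ell^2(\Z^k)$, and under the transpose antiautomorphism the matrix units $e_{m,n}$ go to $e_{n,m}$; one must confirm that the dual action and the skew-product structure conspire so that $\widetilde\gamma$ on $B$ really does go over to $\widetilde\gamma\otimes\mathrm{transpose}$ and not, say, to something twisted by an inner perturbation — if it is only equivalent up to such a perturbation, one needs the (true, but worth stating) fact that inner perturbations of the involution do not change the isomorphism class of the real part, or else one absorbs the discrepancy into the choice of covariant representation. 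Once the equivariance is nailed down, restriction to real parts and the stability of $K\crr$ finish the proof without further difficulty.
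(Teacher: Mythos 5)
Your outline reaches the right statement but by a genuinely different route from the paper. You propose to prove the complex isomorphism $C\sp*(\Lambda\times_d\Z^k)\rtimes_\beta\Z^k\cong C\sp*(\Lambda)\otimes\mathcal K$ directly, by exhibiting a covariant pair and invoking universal properties, and then to verify by hand that this isomorphism intertwines the antiautomorphisms so that it restricts to the real parts. The paper instead factors the complex statement through the gauge action: $B\cong C\sp*(\Lambda)\rtimes_\alpha\T^k$ (Kumjian--Pask, Corollary 5.3), so that $B\rtimes_\beta\Z^k\cong(C\sp*(\Lambda)\rtimes_\alpha\T^k)\rtimes\Z^k\cong C\sp*(\Lambda)\otimes\mathcal K$ by Takai duality, and then handles the real structures wholesale by observing that $(B,\widetilde\gamma,\Z^k,\id,\beta)$ and $(C\sp*(\Lambda),\widetilde\gamma,\T^k,\tau,\alpha)$ are real $C\sp*$-dynamical systems with $(\T^k,\tau)$ dual to $(\Z^k,\id)$, and invoking Takai duality for real $C\sp*$-algebras (Boersema, \emph{Real $C^*$-dynamical systems}, Theorem 9). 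The paper's route buys you exactly the step you flag as delicate: the real Takai duality theorem already produces an isomorphism respecting the real structures, so no generator-by-generator equivariance check (and no worry about how $\widetilde\gamma$ interacts with the identification of $\mathcal K$) is needed. Your route is workable and more self-contained, but the equivariance verification is the entire content of the real part of the theorem and must actually be carried out, not just announced. The final step (stability of $K\crr$ under $\otimes\sr\mathcal K\sr$) is common to both arguments.

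One concrete warning: your fallback claim that ``inner perturbations of the involution do not change the isomorphism class of the real part'' is false in general, so you cannot lean on it if the equivariance only holds up to an inner twist. The standard counterexample is $M_2(\C)$ with the transpose antiautomorphism versus its perturbation by $\mathrm{Ad}(J)$, $J=\left(\begin{smallmatrix}0&1\\-1&0\end{smallmatrix}\right)$: the real parts are $M_2(\R)$ and $\H$, which are not isomorphic. An inner perturbation preserves the real form only when the implementing unitary satisfies an additional symmetry (cocycle-triviality) condition, which would itself have to be checked. So in your approach you must arrange the covariant representation so that the isomorphism intertwines the antiautomorphisms on the nose --- which is precisely the bookkeeping the paper avoids by quoting real Takai duality.
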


\begin{proof}
As in Corollary 5.3 of \cite{kp}, there is an isomorphism 
$C\sp*(\Lambda) \rtimes_\alpha \mathbb T^k \cong C\sp*(\Lambda \times_d \Z^k) $ of complex $C\sp*$-algebras,
where $\alpha$ is the gauge action of $\mathbb T^k$ on $C\sp*(\Lambda)$. Furthermore, under this isomorphism, the dual action of $\Z^k$ on $C\sp*(\Lambda) \rtimes_\alpha \T^k$ corresponds to the action $\beta$ on $B = C\sp*(\Lambda \times_d \Z^k)$ described above.
By Takai duality (for complex $C\sp*$-algebras) we then have
\begin{equation}  \label{takai}
B \rtimes_\beta \Z^k \cong C\sp*(\Lambda \times_d \Z^k) \rtimes_\beta \Z^k
	\cong (C\sp*(\Lambda) \rtimes_\alpha \mathbb T^k) \rtimes_\beta \Z^k \cong C\sp*(\Lambda) \otimes \mathcal{K} \; . \end{equation}
So far, all of this is exactly as indicated in \cite{kp}.

Now we consider the involutions on each of these $C\sp*$-algebras and show that the isomorphisms all respect the corresponding involutions. Recall from \cite[Section 2]{boersema-RMJ} that a real $C\sp*$-dynamical system consists of a quintuple $(A, \overline{ \, \cdot \, }, G, \overline{ \, \cdot \,} , \alpha)$ where $(A, \overline{ \, \cdot \,})$ is a complex $C\sp*$-algebra with conjugate-linear involution; $(G, \overline{ \, \cdot \, })$ is a group with involution; and $\alpha$ is an action of $G$ on $A$ intertwining the involutions in the sense that
\[ \overline{\alpha(g) (a)} = \alpha( \overline{g})  (\overline{a}) \quad \text{for all $a \in A$, $g \in G$.} \]
If $(A, \overline{ \, \cdot \, } , G,\overline{ \, \cdot \, }, \alpha)$ is a real $C\sp*$-dynamical system then the crossed product $A \rtimes_\alpha G$ inherits a natural conjugate-linear involution (Theorem~2 of \cite{boersema-RMJ}).

In our case, it is straightforward to check that the involution $\widetilde \gamma$ on $B$ commutes with the action of $\beta$ so that $(B, \widetilde \gamma, \Z^k, \id, \beta)$ is a real $C\sp*$-dynamical system. Similarly, the gauge action $\alpha$ intertwines with the involution $\widetilde \gamma$ on $C\sp*(\Lambda)$ so that $( C\sp*(\Lambda), \widetilde \gamma, \T^k, \tau, \alpha)$ is also a real $C\sp*$-dynamical system.
Here $\tau$ is the involution on $\T^k$ given by $\tau(z_1, \dots, z_k) = (\overline{z_1}, \dots, \overline{z_n})$. 

Furthermore, as groups with involution, $(\T^k, \tau)$ is dual to $(\Z^k, \id)$ in the sense of \cite[Section 3]{boersema-RMJ}. Therefore by Takai duality for real $C\sp*$-algebras (\cite[Theorem 9]{boersema-RMJ}), the isomorphisms of Equation~(\ref{takai}) are isomorphisms that respect the real structures, proving the theorem.
\end{proof}

As in \cite{evans}, for any $m\in \Z^k$ and $v\in \Lambda^0$, let
\begin{align*}
&B_m(v) = \overline{\text{span}} \{ s_{\mu, m- d(\mu)} s_{\nu, m- d(\nu)}^* \mid s(\mu) = s(\nu) =v \}    \\
\text{~and~}
&B_m = \overline{\text{span}} \{ s_{\mu, m- d(\mu)} s_{\nu, m- d(\nu)}^* \mid s(\mu) = s(\nu) = v\text{~for some $v \in \Lambda^0$ } \}    \\
%\mid s_{\mu, m- d(\mu)} = s_{\nu, m- d(\nu)}^* = (v,m), ~v \in \Lambda^0  \}  . % = \bigcup_{v\in \Lambda^0} B_m(v).
\end{align*}
Then, as in~\cite[Lemma 3.4]{evans} or \cite[Lemma 5.4]{kp}, we have $B =  \lim_{m \to \infty} B_m$ and there 
are isomorphisms
\[
B_m(v)  \cong \mathcal K(\ell^2(s^{-1}(v)))\quad \text{and} \quad
B_m  \cong  \bigoplus_{v \in \Lambda^0} B_m(v) \; , \]
which describe the structure of $B$ as an AF-algebra.
For $m \leq n$, the inclusion map $\iota_{nm} \colon B_m \hookrightarrow B_n$  is determined on $s_{\mu, m - d(\mu)} s_{\nu, m - d(\nu)}^* \in B_m(v)$ by the fact that, by (CK4),
	\begin{equation} s_{\mu, m-d(\mu)} s_{\nu, m-d(\nu)}^* = \sum_{\stackrel{r(\alpha) = v}{d(\alpha) = n-m}} s_{\mu \alpha, m - d(\mu)} s_{\nu\alpha, m -d(\nu)}^* \; .
	\label{eq:iota}
	 \end{equation}
Observe that the terms on the right-hand side all lie in $B_n$, as $d(\mu \alpha) + m - d(\mu) = d(\alpha) + m = n$; however, they will generally lie in different summands $B_n(w)$.
	 	 
Now, we consider the real structure on $B_m$ and $B$. The involution $\widetilde \gamma$ on $B = C\sp*(\Lambda \times_d \Z^k )$ induced by $\gamma$ satisfies 
$\widetilde \gamma(s_{\lambda, m}) = s_{\gamma(\lambda), m}^*$,  so we have $\widetilde \gamma( B_m(v) ) = B_m(\gamma (v))$ and $\widetilde \gamma( B_m ) = B_m$.
Therefore $ \widetilde \gamma$ gives a real structure on $B_m(v)$ (when $v$ is a vertex fixed by $\gamma$) and on $B_m(v) \oplus B_m(\gamma (v))$ (when $v$ is not fixed by $\gamma$).
 The following lemma describes the structure of the corresponding real \calg s $B_m(v)\sr$ and $\left( B_m(v) \oplus B_m(\gamma (v)) \right)\sr$.

\begin{lemma}
	With notation as above, if $ \gamma(v) = v$, then $B_m(v)_\R \cong \mathcal K_\R(\ell^2(s^{-1}(v))).$  
If $  \gamma(v) \neq v$ then $(B_m(v) \oplus B_m(  \gamma(v)))_\R \cong \mathcal K_\C(\ell^2(s^{-1}(v)))$.
\label{lem:AF}
	\end{lemma}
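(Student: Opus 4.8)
The plan is to identify, in each case, the explicit real structure that $\widetilde\gamma$ induces on the relevant complex matrix algebra, and then invoke the classification of real structures on $\mathcal K(\ell^2(X))$: up to isomorphism, a conjugate-linear involution $\sigma$ on $\mathcal K(\ell^2(X))$ with $\sigma(a)^* = \sigma(a^*)$ (equivalently, an antilinear $*$-antiautomorphism, i.e.\ a $C\sp{*,\tau}$-structure) is determined by whether the fixed-point real algebra is $\mathcal K_\R$, $\mathcal K_\C$, or $\mathcal K_\H$, and this is detected by how $\sigma$ acts on the rank-one projections / the matrix units. Concretely, in both cases I will write down an orthonormal-type basis of the relevant Hilbert space indexed by $s^{-1}(v)$ (respectively $s^{-1}(v)$ together with $s^{-1}(\gamma(v)) = \gamma(s^{-1}(v))$), compute the action of $\widetilde\gamma$ on the matrix units $s_{\mu,m-d(\mu)}s_{\nu,m-d(\nu)}^*$ using the defining formula $\widetilde\gamma(s_{\lambda,m}) = s_{\gamma(\lambda),m}^*$, and read off the real structure.

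\textbf{Case $\gamma(v) = v$.} Here $\gamma$ restricts to an involution on $s^{-1}(v) = \{\mu : s(\mu) = v\}$, so it permutes the basis. Using $\widetilde\gamma$ antimultiplicative together with $\widetilde\gamma(s_{\lambda,m}) = s_{\gamma(\lambda),m}^*$, one computes
\[
\widetilde\gamma\bigl(s_{\mu, m-d(\mu)}s_{\nu, m-d(\nu)}^*\bigr) = s_{\gamma(\mu), m-d(\mu)}\, s_{\gamma(\nu), m-d(\nu)}^*,
\]
i.e.\ under the identification $B_m(v) \cong \mathcal K(\ell^2(s^{-1}(v)))$ with matrix units $e_{\mu\nu} = s_{\mu,m-d(\mu)}s_{\nu,m-d(\nu)}^*$, the involution $\widetilde\gamma$ is $e_{\mu\nu}\mapsto e_{\gamma(\mu)\gamma(\nu)}$. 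This is exactly the "standard" real structure pulled back along the basis permutation $\mu\mapsto\gamma(\mu)$; its fixed-point algebra is unitarily equivalent to $\mathcal K_\R(\ell^2(s^{-1}(v)))$ (conjugate the permuted transpose involution back to the usual entrywise-conjugation one). Hence $B_m(v)_\R \cong \mathcal K_\R(\ell^2(s^{-1}(v)))$.

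\textbf{Case $\gamma(v)\neq v$.} Now $\widetilde\gamma$ swaps the two summands $B_m(v)$ and $B_m(\gamma(v))$, so an element of $(B_m(v)\oplus B_m(\gamma(v)))$ fixed (up to $*$) by $\widetilde\gamma$ is determined by its component in $B_m(v) \cong \mathcal K(\ell^2(s^{-1}(v)))$ alone — the other component is the $\widetilde\gamma$-image of the first. This identifies the real algebra $(B_m(v)\oplus B_m(\gamma(v)))_\R$ with the underlying real algebra of $\mathcal K(\ell^2(s^{-1}(v)))$, i.e.\ with $\mathcal K_\C(\ell^2(s^{-1}(v)))$; one should just check that the $*$-operation and norm match up, which is immediate from the formula for $(\,\cdot\,)^*$ and $\widetilde\gamma$ on the generators and the fact that $\gamma$ is a bijection $s^{-1}(v)\to s^{-1}(\gamma(v))$. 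I expect the main (minor) obstacle to be bookkeeping in the first case: verifying that a permutation-twisted transpose antiautomorphism of a matrix algebra is conjugate to the standard one and hence has $\mathcal K_\R$ fixed points — the key point being that $\gamma$ has no odd-order obstruction since $\gamma^2 = \id$, but even when $\gamma$ has fixed points or 2-cycles on $s^{-1}(v)$ the resulting bilinear form is symmetric (never symplectic), so one never lands in $\mathcal K_\H$. The cleanest way to see this is to choose coordinates adapted to the orbits of $\gamma$ on $s^{-1}(v)$ (fixed points and transpositions) and exhibit the conjugating unitary block by block, or simply to appeal to the fact that a real structure on $M_n(\C)$ giving a real (as opposed to quaternionic) $C\sp*$-algebra of the right dimension must be $M_n(\R)$.
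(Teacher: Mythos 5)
Your proposal is essentially correct and follows essentially the same route as the paper: identify $\widetilde\gamma$ on the matrix units of $B_m(v)\cong\mathcal K(\ell^2(s^{-1}(v)))$, recognize it as a permutation-twisted transpose, and conjugate it to the standard transpose. Your ``choose coordinates adapted to the orbits of $\gamma$ (fixed points and transpositions) and conjugate block by block'' is exactly the paper's explicit basis $t_\lambda$ (with $t_\alpha=\tfrac{1}{\sqrt2}(s_{\alpha,\cdot}+s_{\gamma(\alpha),\cdot})$, $t_\beta=\tfrac{i}{\sqrt2}(s_{\gamma(\beta),\cdot}-s_{\beta,\cdot})$) given right after the lemma; the paper's main proof instead rules out the quaternionic alternative by exhibiting $|J|$ mutually orthogonal projections in the finite stages $M_J(\C)_\R$ and checking compatibility with the inductive limit, a step your direct conjugation on all of $\ell^2(s^{-1}(v))$ lets you skip. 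Your key observation -- that $\gamma^2=\mathrm{id}$ makes the permutation matrix symmetric, so the associated form is symmetric and one never lands in $\mathcal K_\H$ -- is exactly the right reason, and the second case is argued just as in the paper. One slip you should fix: since $\widetilde\gamma$ is antimultiplicative, the displayed formula must read $\widetilde\gamma\bigl(s_{\mu,m-d(\mu)}s_{\nu,m-d(\nu)}^*\bigr)=s_{\gamma(\nu),m-d(\nu)}s_{\gamma(\mu),m-d(\mu)}^*$, i.e.\ $e_{\mu\nu}\mapsto e_{\gamma(\nu)\gamma(\mu)}$ (indices swapped); as written, $e_{\mu\nu}\mapsto e_{\gamma(\mu)\gamma(\nu)}$ is an automorphism, not an antiautomorphism, and is not ``the standard real structure pulled back along a permutation.'' Your later prose (``permutation-twisted transpose antiautomorphism'') treats the correct object, so the argument stands once the display is corrected. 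A minor imprecision in the preamble: the fixed algebra of a real structure on a single copy of $\mathcal K(\ell^2(X))$ can only be $\mathcal K_\R$ or $\mathcal K_\H$; the $\mathcal K_\C$ case arises only from the flip on $B_m(v)\oplus B_m(\gamma(v))$, which is how you in fact handle it.
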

\begin{proof}
We first consider the case when $ \gamma(v) = v$.  
Fix $j \in \N$ and decompose 
\[J = J(j) := \{\lambda \in \Lambda v : d(\lambda) \leq (j, j, \ldots, j)\}\] 
as $J = J_f \sqcup J_1 \sqcup J_2$, where $ \gamma|_{J_f} = \id$ and $ \gamma(J_1) = J_2$.
We can view elements of $M_J(\C)$ as lying in $B_m(v)$ under the identification $e_{\mu, \nu} \mapsto s_{\mu, m-d(\mu)} s_{\nu, m-d(\nu)}^*$. {With this identification, the antimultiplicative involution $\widetilde \gamma$ is given on $M_J(\C)$ by $\widetilde \gamma(e_{\mu,\nu}) = e_{\gamma(\nu), \gamma(\mu)}$. }Furthermore,  $B_m(v) = \varinjlim_{j\to \infty}  M_{J(j)}(\C)$; the connecting map $M_{J(j)} \to M_{J(j+1)}$ is determined  by the inclusions $J_f(j) \subseteq J_f(j+1)$ and $J_1(j) \subseteq J_1(j+1)$. 

It follows that every element in 
\[M_{J}(\C)\sr = \{ a \in M_J(\C): a^* = \widetilde \gamma(a)\}\]
 is of the block form 
\[ \begin{pmatrix}
A & B &  \overline{B} \\
C & D & E \\ 
\overline C & \overline{E} & \overline{D}
\end{pmatrix},\]
where $A$ is real valued and $B,C, D, E$ are complex valued matrices.

We claim that $M_J(\C)\sr \cong M_J(\R)$. Set $h = |J|$, $h_1 = |J_f|$, and $h_2 =|J_1| =  |J_2|$ (so $h_1 + 2h_2 = h$). 
We know that (up to isomorphism) the only real $C\sp*$-algebras whose complexifications are isomorphic to $M_J(\C) = M_h(\C)$ are $M_h(\R)$ and $M_{h/2}(\Bbb H)$; and the second possibility can only happen if $h$ is even 
(see, for example, page 1 of \cite{schroderbook}). We show there exists a system of $h$ orthogonal projections in $M_J(\C)\sr$, which precludes the existence of an isomorphism 
$M_J(\C)\sr \cong M_{h/2}(\Bbb H)$.
Indeed, there are $h_1$ obvious orthogonal subprojections of 
\[ p =  \begin{pmatrix}
I_{h_1} & 0 &  0 \\
0 & 0 & 0 \\ 
0 & 0 & 0
\end{pmatrix} \; , \] \; 
and similarly there are $h_2$ orthogonal subprojections of each of
\[ q_1 = \begin{pmatrix}
0 & 0 &  0 \\
0 & 1/2 \,I_{h_2} & i/2 \, I_{h_2}  \\ 
0 & -i/2 \, I_{h_2} & 1/2 \,  I_{h_2}
\end{pmatrix} \quad \text{and} \quad
q_2 = \begin{pmatrix}
0 & 0 &  0 \\
0 & 1/2 \,I_{h_2} & -i/2 \, I_{h_2}  \\ 
0 & i/2 \, I_{h_2} & 1/2 \,  I_{h_2}
\end{pmatrix}   \; .   \] 
Notice that $p + q_1 + q_2 = I_h$. It follows that $M_J(\C)\sr $ is isomorphic to $M_h(\R)$.
Moreover, 
it is evident that this choice of orthogonal subprojections is compatible with the inclusion maps of the inductive limit $B_m(v) \cong \varinjlim M_J(\C)$.  Hence, if $ \gamma(v) = v$, $B_m(v)\sr \cong \mathcal K\sr(\ell^2(s^{-1}(v)))$ as claimed.

Now, suppose $ \gamma(v) = w\not= v$.  For any fixed $j \in \N$, $ \gamma$ is a bijection from 
$J_v:= \{\lambda \in \Lambda v : d(\lambda) \leq (j, j, \ldots, j)\}$ to $J_w := \{ \mu \in \Lambda w: d(\mu) \leq (j, \ldots, j)\}$.   
Therefore, for $(a,b) \in M_{J_v}(\C) \oplus M_{J_w}(\C) \subseteq B_m(v) \oplus B_m(w)$, the involution $\widetilde \gamma$ satisfies 
\[ \widetilde \gamma (a,b) = (b^t, a^t) \; ,   \]
and so the associated real matrix algebra is $\{ M \oplus \overline M: M \in M_{J_v}(\C) \} \cong M_{J_v}(\C)$.  As $ \mathcal K_\C(\ell^2(s^{-1}(v))) = \varinjlim_{j\to \infty} M_{J_v}(\C) \cong B_m(v) \oplus B_m(w)$ the result follows. 
\end{proof}

As a complement to the abstract reasoning above, and inspired by \cite[Theorem 2.5]{boersema-MJM}, we now exhibit a choice of basis for $\C^J$ which will give a more concrete argument for why $M_J(\C)\sr \cong M_J(\R)$ when $ \gamma(v) = v$.  Fix an arbitrary $n\in \Z^k$. 
For $\lambda \in J_f$ we define $t_{\lambda} := s_{\lambda, n - d(\lambda)}$, and if $\alpha \in J_1$ set 
\[ t_\alpha := \frac{s_{\alpha, n - d(\alpha)} + s_{ \gamma(\alpha), n - d(\alpha)}}{\sqrt 2}.\]
If $\beta \in J_2$ we define $t_\beta := \tfrac{i}{\sqrt 2}( s_{\gamma(\beta), n - d(\beta)} - s_{\beta, n - d(\beta)})$. One easily computes that 
\[ \widetilde \gamma(t_\lambda) = t_{\lambda}^*\]
for any $\lambda \in J$,
and that for any $ \alpha, \beta \in J$ we have 
\[ t_\beta^* t_\alpha = t_{\alpha}^* t_\beta  = \delta_{\alpha, \beta} s_{v, n}.\]
It follows that, for any $\alpha, \beta, \lambda, \eta \in J$, 
\[ t_\alpha t_\beta^* t_\lambda t_\eta^* = \delta_{\beta, \lambda} t_\alpha t_\eta^*.\]
In other words, $\{ t_\alpha t_\beta^*: \alpha, \beta \in J\}$ is a set of matrix units, which spans $M_J(\C)$ since $\{ s_{\lambda, n-d(\lambda)} {s_{\mu, n-d(\mu)}^*}: \lambda, \mu \in J\}$ does, and which satisfies $\widetilde \gamma(t_\alpha t_\beta^*) = t_{\beta} t_{\alpha}^*$ for all $\alpha, \beta \in J$.  With this basis, it is evident that $M_J(\C)\sr = M_J(\R).$

\begin{rmk} If $\Lambda$ is a directed graph (1-graph), the operators $\{ t_\alpha: \alpha \text{ an edge}\}$ were used in \cite[Theorem 2.4]{boersema-MJM} to show that any vertex-fixing involution $ \gamma$ on $\Lambda$ gives rise to the same real $C\sp*$-algebra as the trivial involution.  However, this proof breaks down in the $k$-graph case for $k > 1$, because $\{ t_\alpha: \alpha \in \Lambda\}$ need not satisfy the Cuntz--Krieger relations, even if all vertices are fixed by $ \gamma$.  In particular, if $ef \sim f'e'$ we need not have $t_e t_f = t_{f'} t_{e'}$.
It remains an open question whether the conclusion of \cite[Theorem 2.4]{boersema-MJM} extends to higher-rank graphs with involution.
\end{rmk} 

The following Corollary is immediate from Lemma \ref{lem:AF}.

\begin{cor} For each $m \in \Z^k$,
\[ B\pr_m \cong \bigoplus_{v \in G_f} \mathcal K_\R(\ell^2(s^{-1}(v))) \oplus \bigoplus_{v \in G_1} \mathcal K_\C(\ell^2(s^{-1}(v))) \;  \]
where $G_f$ is the set of vertices of $\Lambda$ that are fixed by $\gamma$ and $G_1$ is a set that contains exactly one vertex from every $\gamma$-orbit of cardinality $2$.
Consequently,
	 $ B\sr = C\sp*\sr(\Lambda \times_d \Z^k; \gamma) = \varinjlim B_m\pr$ is an AF real $C\sp*$-algebra.
	 \label{cor:AF}
\end{cor}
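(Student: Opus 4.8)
The plan is to deduce Corollary~\ref{cor:AF} directly from Lemma~\ref{lem:AF}, which already does essentially all the work. First I would recall that for a fixed $m \in \Z^k$ the complex algebra $B_m$ decomposes as $\bigoplus_{v \in \Lambda^0} B_m(v)$, and that the induced involution $\widetilde\gamma$ permutes the summands according to the action of $\gamma$ on $\Lambda^0$: it fixes $B_m(v)$ when $\gamma(v) = v$ and swaps $B_m(v) \leftrightarrow B_m(\gamma(v))$ otherwise. Hence, grouping the summands by $\gamma$-orbits and choosing one representative $v$ from each two-element orbit (this is exactly the set $G_1$), the real algebra $B_m\pr = \{a \in B_m : \widetilde\gamma(a) = a^*\}$ splits as an orthogonal direct sum $\bigoplus_{v \in G_f}(B_m(v))\pr \oplus \bigoplus_{v \in G_1}(B_m(v) \oplus B_m(\gamma(v)))\pr$, where $G_f$ is the fixed-point set. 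Lemma~\ref{lem:AF} identifies each of these real summands: $(B_m(v))\pr \cong \mathcal K\sr(\ell^2(s^{-1}(v)))$ for $v \in G_f$, and $(B_m(v)\oplus B_m(\gamma(v)))\pr \cong \mathcal K\sc(\ell^2(s^{-1}(v)))$ for $v \in G_1$. Assembling these gives the displayed formula for $B_m\pr$.

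For the final assertion that $B\sr = \varinjlim_m B_m\pr$ is an AF real $C\sp*$-algebra, I would argue as follows. We have $B = \varinjlim_m B_m$ in the complex category, with connecting maps $\iota_{nm}$ as in Equation~\eqref{eq:iota}; since $\iota_{nm}$ is given on matrix units by a formula that is manifestly $\widetilde\gamma$-equivariant (the factorization rule commutes with $\gamma$, so $\widetilde\gamma$ of the right-hand side of \eqref{eq:iota} is the corresponding sum for $s_{\gamma(\mu)\gamma(\alpha),\,\cdot}\,s_{\gamma(\nu)\gamma(\alpha),\,\cdot}^*$), each $\iota_{nm}$ restricts to a $*$-homomorphism $B_m\pr \to B_n\pr$. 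Taking the inductive limit of these real algebras and complexifying commutes with the direct limit, so $\C \otimes \varinjlim_m B_m\pr = \varinjlim_m (\C \otimes B_m\pr) = \varinjlim_m B_m = B$; combined with the fact that $\varinjlim_m B_m\pr$ is a real form of $B$ sitting inside $B$ (namely the fixed points of $\widetilde\gamma(\cdot) = (\cdot)^*$, which are compatible across the limit), we conclude $\varinjlim_m B_m\pr = B\sr$. Finally, $B\sr$ is AF because it is an increasing union of the $B_m\pr$, each of which is a finite direct sum of real or complex matrix algebras over the finite sets $J(j)$ appearing in the proof of Lemma~\ref{lem:AF}, hence each $B_m\pr$ is itself an inductive limit of finite-dimensional real $C\sp*$-algebras, and a countable inductive limit of AF real algebras is AF.

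The main obstacle, such as it is, is purely bookkeeping: one must be careful that the identification of the real summands furnished by Lemma~\ref{lem:AF} is compatible with the connecting maps, so that the inductive limit of the real algebras really is $B\sr$ and not merely some real $C\sp*$-algebra with the right complexification at each finite stage. Concretely, one needs that the orthogonal systems of projections exhibited in the proof of Lemma~\ref{lem:AF} (the $h_1$ subprojections of $p$ and the $h_2$ subprojections of each $q_i$, or in the non-fixed case the identification $M \oplus \overline M \leftrightarrow M$) are sent to one another under $\iota_{nm}$ — but this is exactly the ``moreover'' clause already noted at the end of the proof of Lemma~\ref{lem:AF}. Modulo invoking that observation, the Corollary is immediate, so the write-up can be kept short.
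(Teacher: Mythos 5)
Your proposal is correct and follows the same route as the paper, which simply declares the corollary immediate from Lemma \ref{lem:AF}; you have just written out the routine bookkeeping (orbit decomposition of the summands $B_m(v)$, $\widetilde\gamma$-equivariance of the maps $\iota_{nm}$, and compatibility of real parts with the inductive limit). The only quibble is a harmless index slip: since $\widetilde\gamma$ is antimultiplicative, applying it to the right-hand side of \eqref{eq:iota} yields the sum of the terms $s_{\gamma(\nu)\gamma(\alpha),\,\cdot}\,s_{\gamma(\mu)\gamma(\alpha),\,\cdot}^*$, i.e.\ with $\mu$ and $\nu$ interchanged relative to what you wrote.
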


\subsection{The spectral sequence via group homology}
The  main result of this section is the following. 
\begin{thm} \label{sp-seq1}
There exists a spectral sequence $ \{ E^r, d^r \}$ of $\CR$-modules that converges to $K\crr( C\sp*\sr(\Lambda, \gamma) )$ and has
\[ E^2_{p,q} = H_p(\Z^k, K\crr(B\sr) ) \; .\]
\end{thm}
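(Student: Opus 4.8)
The plan is to realize Kasparov's equivariant $KK$-spectral sequence in the setting of Theorem~\ref{B_times_Z} and then verify that it carries $\CR$-module structure throughout. By Theorem~\ref{B_times_Z} we have $K\crr(C\sp*\sr(\Lambda,\gamma)) \cong K\crr(B\sr \rtimes_\beta \Z^k)$, so it suffices to produce a spectral sequence of $\CR$-modules converging to $K\crr(B\sr\rtimes_\beta\Z^k)$ with the asserted $E^2$-page. First I would recall that Kasparov's spectral sequence \cite[6.10]{kasparov-equivKK} is a first-quadrant homological spectral sequence associated to a crossed product by $\Z^k$ (or more generally by a group satisfying suitable conditions): built from the skeletal filtration of a model for $E\Z^k = \R^k$, it has $E^2_{p,q} = H_p(\Z^k; K_q(\text{coefficient algebra}))$ and converges to the $K$-theory of the crossed product. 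The essential point is that this is all a consequence of a filtered chain-complex (equivalently, an exact couple) living in the appropriate category, and $\Z^k$ acting on $\R^k$ freely and cocompactly gives a finite $CW$-model, so convergence is automatic and there are only finitely many nonzero columns.

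The key steps, in order, are: (i) fix the equivariant $CW$-structure on $\R^k$ coming from the standard cubulation, so the cellular chain complex of $\R^k$ is the Koszul-type resolution of $\Z$ over $\Z[\Z^k]$; (ii) identify the $E^1$-page as $E^1_{p,q} = \bigoplus_{\text{$p$-cells}} K\crr_q(B\sr)$ with $d^1$ the differential of the tensored-up cellular chain complex, which computes group homology, giving $E^2_{p,q} = H_p(\Z^k; K\crr_q(B\sr))$; (iii) check that every object and every differential in sight is a map of $\CR$-modules. For step (iii), the central observation is that $K\crr(-)$ is itself a $KK^{\mathbb{R}}$-functor: the eight real $K$-groups, the two complex $K$-groups, and all the structure maps $\eta, c, r, \psi, \beta$ are all natural with respect to (real) $KK$-morphisms — indeed $KU_*$ is a $KK$-functor and $KO_*$ is a $KK^{\mathbb R}$-functor, and $c,r$ are induced by the fixed $*$-homomorphisms $\R\hookrightarrow\C$, $\C\hookrightarrow M_2(\R)$, while $\psi$ comes from complex conjugation, all of which are compatible with any equivariant $KK$-construction. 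Since Kasparov's spectral sequence is assembled entirely from mapping-cone / exact-triangle data in (equivariant) $KK$, applying the $\CR$-valued functor $K\crr$ termwise yields a spectral sequence in the category of $\CR$-modules; the relations \eqref{eq:natural-transformations} are preserved because they hold at the level of the representing $KK^{\mathbb R}$-elements. One should also note that the $\Z^k$-action on the coefficients $K\crr(B\sr)$ is the one induced by $\beta$, which is exactly what makes $H_p(\Z^k; K\crr(B\sr))$ meaningful as a $\CR$-module (the $\Z^k$-action commutes with all $\CR$-structure maps since $\beta$ is a genuine real $C\sp*$-dynamical action, as established in the proof of Theorem~\ref{B_times_Z}).

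I expect the main obstacle to be step (iii): making precise that the entire $\CR$-module package — not just the separate groups $KO_*$ and $KU_*$, but the natural transformations $c, r, \eta, \psi$ and the relations among them — is transported functorially through Kasparov's construction. The groups individually are routine (Kasparov's machine applies verbatim to $KO$ and to $KU$ separately), but one must argue that the two spectral sequences one gets are linked at every page by the $\CR$-structure maps, compatibly with the differentials $d^r$. The cleanest route is to observe that the filtration of $B\sr\rtimes_\beta\Z^k$ by the crossed products of the skeleta is a filtration by honest real $C\sp*$-algebras (or real $C\sp*$-algebra ideals/quotients), so each successive quotient is a real $C\sp*$-algebra, and the long exact sequences in $K\crr$ of these extensions — which are exact sequences of $\CR$-modules by Theorem~\ref{CR Theorems}(1) together with the six-term exactness of $KO_*$ and $KU_*$ and the naturality of \eqref{eq:CR-LES} — assemble into the desired exact couple of $\CR$-modules. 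Then convergence and the $E^2$-identification follow formally. A secondary, more bookkeeping-level point is to confirm that Kasparov's hypotheses for the spectral sequence (amenability of $\Z^k$, existence of a finite-dimensional cocompact model for $E\Z^k$) are met and that his construction, phrased for complex $C\sp*$-algebras, runs identically in the real category — this is where one invokes that $\Z^k$ acts on $\R^k$ with the standard cubical structure and that real equivariant $KK$-theory for $\Z^k$ enjoys the same formal properties used in \cite[6.10]{kasparov-equivKK}.
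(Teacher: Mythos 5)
Your proposal is correct and follows essentially the same route as the paper: reduce to $B\sr \rtimes_\beta \Z^k$ via Theorem \ref{B_times_Z}, run Kasparov's spectral sequence \cite[6.10]{kasparov-equivKK} with the Koszul/cellular chain complex identifying $E^2_{p,q} \cong H_p(\Z^k, K\crr(B\sr))$, and upgrade everything to $\CR$-modules by observing that the structure maps $r, c, \eta, \psi, \beta$ are represented by $KK$-elements (equivalently, that the filtration is by real $C\sp*$-algebras), which is exactly the naturality argument the paper borrows from Schochet. The one point the paper makes explicit that you gloss over is that Kasparov's theorem a priori computes only the $\gamma$-part of $K$-theory of the crossed product, and one must invoke the Baum--Connes conjecture for $\Z^k$ (as the paper does, citing Schick) to identify this with the full $K$-theory; your appeal to amenability of $\Z^k$ and the finite cocompact model covers this in substance, though it should be stated as the reason the limit is $K\crr(B\sr \rtimes_\beta \Z^k)$ itself.
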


In this spectral sequence, each object $E_{p,q}^r$ is a $\CR$-module and each map $d^r_{p,q}$ is a $\CR$-module homomorphism. The spectral sequence is defined for all $p,q \in \Z$, but it is periodic in $q$. (The real part has period $8$ and the complex part has period $2$.) Also $E_{p,q}^r = 0$ for $p \notin \{0, 1, \dots, k\}$.

\begin{proof}[Proof of Theorem~\ref{sp-seq1}]
Let $k_*(B\sr)$ denote one of the graded functors $KO_*(B\sr)$ or $KU_*(B\sr)$. Applying \cite[6.10 Theorem]{kasparov-equivKK}
to the setting where $\pi = \Z^k$ and $D = B\sr$, we obtain a spectral sequence converging to the ``$\gamma$-part'' of $k_*(B\sr \rtimes \Z^k)$, 
and whose $E^1$ and $E^2$ pages are given by
\begin{align*}
E^1_{p,q} &\cong k_{p+q}(D_p/D_{p-1}) \cong \bigoplus_{m: 1 \leq m\leq  {k\choose p}}  
k_q(B\sr) \\
E^2_{p,q} & \cong H_p(\Z^k, k_q( B\sr ) )
\end{align*}
(Here 
 $0 \subseteq D_0 \subseteq D_1 \subseteq \cdots D_k = D_X$ is a filtration by ideals of a certain fixed-point algebra $D_X$, which is Morita equivalent to $B\sr$.) 
Since the Baum-Connes Conjecture with arbitrary coefficients is true for $\Z^k$ \cite{schick}, this
spectral sequence in fact converges precisely to $k_*(B\sr \rtimes \Z^k)$, which equals
$k_*(C\sr ^* (\Lambda, \gamma) )$ by Theorem \ref{B_times_Z}.
Taking both of these spectral sequences together, we have a spectral sequence with both a real and a complex part, that converges to $K\crr(C\sr ^* (\Lambda, \gamma) )$.

Now, let $k_*(B\sr), \widetilde k_*(B\sr)$ each independently denote one of the groups $KO_*(B\sr)$ or $KU_*(B\sr)$ and let
$\theta \colon k_*(B\sr) \rightarrow \widetilde{k}_*(B\sr)$ be one of the natural transformations $r, c, \eta, \beta, \psi $ of $K\crr(B\sr) = K\crr(D_X)$. 
Any one of these natural transformations can actually be represented by an element in $KK_*(C_1, C_2)$ where each $C_i$ is isomorphic to $\R$ or  $\C$. Multiplying by this $KK$-element induces the map 
$\theta^1: k_{p+q}(D_p/D_{p-1})  \to \widetilde k_{p+q}(D_p/D_{p-1})$.
Thus the $E^1$ page of the spectral sequence also has a natural $\CR$-structure. 
Furthermore, as observed by Schochet, the spectral sequence construction of {\cite{schochet-I}} is natural not only with respect to filtered homomorphisms of filtered $C\sp *$-algebras but also with respect to natural transformations of exact functors (see the comments on \cite[page 207]{schochet-I}). As Kasparov's spectral sequence construction follows that of Schochet, it follows that $\theta^{1} \colon E^1_*\rightarrow E^1_*$ commutes with the differentials and converges to the map 
$\theta^\infty \colon k_*( C\sr ^* (\Lambda, \gamma )) \rightarrow \widetilde k_*(C\sr ^* (\Lambda, \gamma) )$ induced by the original $KK$-element on the $E^\infty$ page. Therefore, we can consider the spectral sequence as a spectral sequence in the category of $\CR$-modules. 

At the $E^2$ page, we also have another $\CR$-module structure, induced by multiplying $k_*(B\sr)$ by the $KK$-element representing the natural transformation $\theta$.
It remains to show that the isomorphism $E^2_{p,q} \cong H_p(\Z^k, k_q(B\sr) )$ is a $\CR$-module isomorphism.
Recall from (\cite[p. 199]{kasparov-equivKK}) that, under the isomorphism
$E^1_{p,q}  \cong \bigoplus_m k_q(B\sr)$,
the differential map $d^1$ corresponds to the boundary homomorphism of the simplicial chain complex, yielding the isomorphism
$E^2_{p,q} \cong  H_p(\Z^k, k_*(B\sr) ) $.
It then follows immediately that
under this isomorphism, the map $\theta^2$ on $E^2$ which is induced from $\theta^1: k_{p+q}(D_p/D_{p-1}) \to \widetilde k_{p+q}(D_p/D_{p-1})$ is identical to the map on $H_p(\Z^k, k_*(B\sr) )$ which arises from $\theta\sp{B\sr}: k_*(B\sr) \to \widetilde k_*(B\sr)$ and the naturality of group homology 
(see for example Section III.6 of \cite{brownbook}).
Therefore, the $\CR$-module structure of $H_p(\Z^k, k_q(B\sr) )$ is the same as that of  $E^2_{p,q}$.
\end{proof}

\subsection{A combinatorial description of $E^2_{p,q}$}
	  \label{sec:combinatorial}
In this section, we will use the structure of  $B\sr$ as an AF algebra (Corollary \ref{cor:AF}) to obtain (in Theorem \ref{sp-seq2}) a more explicit formula for the  $E^2$ page of our spectral sequence from Theorem \ref{sp-seq1}.  To be precise, we identify a chain complex $\mathcal A^{(0)}$ whose $p$th group $\mathcal A_p^{(0)}$ consists of $k \choose p$ copies of a certain $\CR$-module, and whose homology computes $E^2_{p,q}$.  In Section \ref{sec:lowrank} below, we provide an explicit description of the connecting maps of this chain complex in terms of the adjacency matrices of $\Lambda$, in the situation where $k \leq 3$.

Recall that $B\sr = \varinjlim (B_m\pr, \iota_{nm})$, where (for $m \in \Z^k$)
\[B_m\pr = \overline{\text{span}} \{ s_{\mu, m- d(\mu)} s_{\nu, m- d(\nu)}^* \mid s(\mu) = s(\nu) = v\text{~for some $v \in \Lambda^0$ } \}  \subseteq B\sr = C\sp*\sr(\Lambda \times_d \Z^k) ,\]
 and $\iota_{nm} \colon B_m\pr \hookrightarrow B_n\pr$ (for $m \leq n \in \Z^k$) are the  connecting maps \eqref{eq:iota} of the inductive system. Let
$\fj_{nm} := (\iota_{nm})_* \colon K\crr(B\pr_m) \rightarrow K\crr(B\pr_n)$ be the induced map on united $K$-theory. 
Partition $\Lambda^0$ into three disjoint sets, $\Lambda^0 = G_f \sqcup G_1 \sqcup G_2$, where $ \gamma|_{G_f} = \id$ and $ \gamma(G_1) = G_2$.  With this notation, Corollary \ref{cor:AF} implies that 
\[ B\pr_m \cong \bigoplus_{v \in G_f} \mathcal K_\R(\ell^2(s^{-1}(v))) \oplus \bigoplus_{v \in G_1} \mathcal K_\C(\ell^2(s^{-1}(v))) \; \]
and consequently
\[ A_m := K\crr(B_m\pr) =  K\crr(\R)^{G_f} \oplus  K\crr(\C)^{G_1} \; .\]
The continuity of $K$-theory implies that
\[  A_\infty := \varinjlim (A_m, \fj_{nm}  ) \cong K\crr(B\sr) \; .\]

As in \cite[Section~3]{evans}, we define 
\[N_p = \{ (\mu_1, \dots, \mu_p)  \mid \mu_i \in \N, 1 \leq \mu_1  < \dots < \mu_p \leq k \} \; . \]
(The authors recognize that $\mu$ is also a common notation for an element of a $k$-graph $\Lambda$. We have chosen to follow Evans' notation, using $\mu_i$ and $\mu^i$ in reference to elements of $N_p$, for ease of cross-referencing.  It should always be clear from context (and the presence of sub- and super-scripts)  whether $\lambda$ or $\mu$ refers to an element of $N_p$ or of $\Lambda$.)

Observe that $|N_p| = {{k} \choose {p}}$.
If $\mu = (\mu_1, \dots, \mu_p) \in N_p$ then for any $1 \leq i \leq p$, we write
\[ \mu^i = \begin{cases} (\mu_1, \dots, \mu_{i-1}, \mu_{i+1}, \dots, \mu_p) \in N_{p-1} & \text{if~} p > 1\\
		\star & \text{if~} p =1 \; . \end{cases} \]
Let $\mathcal{B}$ denote the chain complex of $\CR$-modules,
\[ \mathcal{B}: ~~ 0 \rightarrow A_\infty \rightarrow \dots \rightarrow \bigoplus_{N_p} A_\infty \rightarrow \dots \rightarrow A_\infty \rightarrow 0 \;. \]  
Writing $\mathcal B_p :=   \bigoplus_{N_p} A_\infty $, the differentials 
$ \partial_p\colon %\bigoplus_{N_p} A \rightarrow \bigoplus_{N_{p-1}} A
\mathcal B_p \to \mathcal B_{p-1}$
are defined by
\begin{equation} \partial_p 
		= \bigoplus_{\lambda \in N_{p-1} } \sum_{\mu \in N_p} \sum_{i = 1}^p (-1)^{i+1} \delta_{\lambda, \mu^i} (\id - (\beta_{\mu_i})^{-1}_*) \; 
		\label{eq:partial}
\end{equation}
where $\beta$ denotes the usual action of $\Z^k$ on $B\sr$ given on generators by $\beta(n)s_{\mu, m} = s_{\mu, m+n}$.  We have $\beta_j = \beta(e_j)$, and we write $(\beta_j)_*$ for the induced map on $A_\infty$. 

For an element $y \in \mathcal B_p$, we can write $y = \bigoplus_{\mu \in N_p} y_\mu$ where $y_\mu \in A_\infty$. We find it convenient to write such an element alternatively as
$y = \sum_{\mu \in N_p} y_\mu e_\mu$ where $e_\mu \in \{0,1\}^{N_p}$ satisfies $e_\mu(\lambda) = \delta_{\mu, \lambda}$.
Using this notation, we can write the differentials of the complex $\mathcal{B}$ as
\[\partial_p (y_\mu e_\mu) = \sum_{i=1}^p (-1)^{i+1} (\id - (\beta_{\mu_i})^{-1}_*) (y_\mu) e_{\mu^i}  \quad \text{for $\mu \in N_p$ and $y_\mu \in A_\infty$.} \]

\begin{lemma} \label{E2=B}
There is a graded isomorphism
\[ H_*(\Z^k, K\crr(B\sr)) \cong H_*(\mathcal{B}) \; .\]
\end{lemma}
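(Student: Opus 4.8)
The plan is to identify $H_*(\Z^k, K\crr(B\sr))$ with the homology of the standard Koszul-type complex computing group homology of $\Z^k$ with coefficients in the $\Z^k$-module $K\crr(B\sr)$, and then to recognize that this Koszul complex is precisely (or is canonically isomorphic to) the complex $\mathcal{B}$. Recall that $\Z^k$ acts on $A_\infty \cong K\crr(B\sr)$ via the maps $(\beta_j)_*$, and that these commute since $\beta$ is a $\Z^k$-action. For any $\Z^k$-module $N$ on which the generators act by commuting automorphisms $\sigma_1, \dots, \sigma_k$, the group homology $H_*(\Z^k, N)$ is computed by the Koszul complex $N \otimes_\Z \bigwedge^\bullet \Z^k$ with differential $\partial(n \otimes e_{j_1} \wedge \cdots \wedge e_{j_p}) = \sum_{i=1}^p (-1)^{i+1}(\id - \sigma_{j_i}^{-1})(n) \otimes e_{j_1} \wedge \cdots \widehat{e_{j_i}} \cdots \wedge e_{j_p}$.

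First I would set up the standard free resolution of $\Z$ over $\Z[\Z^k]$: since $\Z[\Z^k] = \Z[x_1^{\pm 1}, \dots, x_k^{\pm 1}]$, the Koszul complex on the regular sequence $(x_1 - 1, \dots, x_k - 1)$ provides a free resolution $\Z[\Z^k] \otimes \bigwedge^\bullet \Z^k \to \Z$. Tensoring with $N$ over $\Z[\Z^k]$ (equivalently, taking coinvariants of $N \otimes_\Z \Z[\Z^k] \otimes \bigwedge^\bullet$) yields the complex $N \otimes_\Z \bigwedge^\bullet \Z^k$ with exactly the differential displayed above, where $\sigma_j$ is the action of $x_j$. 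Its homology is by definition $H_*(\Z^k, N)$. I would cite Brown's book (Section III.1, or the treatment of homology of free abelian groups) for this standard fact; the excerpt already references \cite{brownbook}.

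Second I would match indexing. The set $N_p$ of strictly increasing $p$-tuples $1 \le \mu_1 < \cdots < \mu_p \le k$ is exactly a basis for $\bigwedge^p \Z^k$, via $e_\mu \leftrightarrow e_{\mu_1} \wedge \cdots \wedge e_{\mu_p}$, and the operation $\mu \mapsto \mu^i$ corresponds to deleting the $i$th wedge factor. With $N = A_\infty \cong K\crr(B\sr)$ and $\sigma_j = (\beta_j)_*$, the Koszul differential becomes precisely the map $\partial_p$ of \eqref{eq:partial}, since $\partial_p(y_\mu e_\mu) = \sum_{i=1}^p (-1)^{i+1}(\id - (\beta_{\mu_i})^{-1}_*)(y_\mu) e_{\mu^i}$. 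Thus $\mathcal{B}$ \emph{is} the Koszul complex for group homology, and the identification $H_*(\mathcal{B}) \cong H_*(\Z^k, K\crr(B\sr))$ follows. One should also note this identification is a $\CR$-module isomorphism: each natural transformation $\theta \in \{r, c, \eta, \beta, \psi\}$ acts on $A_\infty$ compatibly with the $\Z^k$-action (because $\beta$ commutes with the complexification, conjugation, etc.), hence induces a chain map of Koszul complexes, so the homology isomorphism respects the $\CR$-structure.

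The only genuine subtlety — and the main place where care is needed — is the appearance of $(\beta_j)^{-1}_*$ rather than $(\beta_j)_*$ in the differential, together with getting the signs and the direction (homology vs. cohomology) right. The resolution of this is that $H_*(\Z^k, N)$ is $\mathrm{Tor}^{\Z[\Z^k]}_*(\Z, N)$, and the Koszul resolution one uses is the one on $(x_1 - 1, \dots, x_k - 1)$; when one writes the induced differential on $N \otimes \bigwedge^\bullet$ and then rewrites the action of $x_j$ in terms of the automorphism $\beta_j$, one must be consistent about whether $x_j$ acts as $(\beta_j)_*$ or its inverse (this depends on the convention for the $\Z^k$-action on $B\sr$ versus on $B\sr \rtimes \Z^k$, i.e. whether one uses left or right modules, which is exactly the convention fixed in Kasparov's and Evans' setup). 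Since $(\beta_j)_*$ is an automorphism, $\id - (\beta_j)_*$ and $\id - (\beta_j)^{-1}_* = -(\beta_j)^{-1}_*(\id - (\beta_j)_*)$ differ only by an automorphism, so the complexes are isomorphic and the homology is unaffected; but for $\mathcal{B}$ to literally agree with Kasparov's $E^2$ as stated, one wants the conventions to line up, and I would simply note that Evans makes the same choice in \cite[Section 3]{evans} so that $\mathcal{B}$ here is the real/$\CR$ analogue of his complex. Everything else is bookkeeping.
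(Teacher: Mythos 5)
Your proposal is correct and follows essentially the same route as the paper: the paper's proof simply invokes the Koszul resolution of $\Z$ over $\Z[\Z^k]$ tensored with $k_*(B\sr)$ (for $k_* = KO_*$ or $KU_*$), citing Lemma~3.12 of \cite{evans}, which is exactly the argument you spell out, including the identification of $N_p$ with a basis of $\bigwedge^p \Z^k$ and the compatibility of the natural transformations $r, c, \eta, \beta, \psi$ with the $\Z^k$-action. Your additional remark about the $(\beta_j)^{-1}_*$ versus $(\beta_j)_*$ convention is a reasonable bookkeeping point that the paper handles implicitly by following Evans' conventions.
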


\begin{proof}
This result is proven exactly as in the proof of Lemma~3.12 of \cite{evans}, making use of the Koszul resolution for $\Z$ over $\Z G$ where $G = \Z^k$ and then tensoring that resolution by $k_*(B\sr)$ where the functor $k_*(-)$ is any of the functors $KO_i(-)$ and $KU_i(-)$.
\end{proof}

We now work towards a more concrete description of $H_*(\mathcal B)$.  For each $m \in \N^k$, let $\mathcal A^{(m)}$ denote the chain complex of $\CR$-modules
 \[ \mathcal{A}^{(m)} : ~~ 0 \rightarrow A_m \rightarrow \dots \rightarrow \bigoplus_{\mu \in N_p} A_m \rightarrow \dots \rightarrow A_m \rightarrow 0 \; \]
where, we recall, $A_m =  K\crr(B_m\pr )$. The differentials $\partial_p^{(m)}$ for $\A^{(m)}$ are defined by 
\[ \partial_p^{(m)} (y_\mu e_\mu) = \sum_{i=1}^p (-1)^{i+1} (\id - \phi^m_{\mu_i}) (y_\mu) e_{\mu^i}  \;  \quad \text{for $\mu \in N_p$ and $y_\mu \in A_m$} \]
where $\phi^m_j \colon K\crr(B_m\pr) \rightarrow K\crr(B_m\pr)$ is the map induced on $K\crr(-)$ by the composition
\[ B_m\pr \xrightarrow{\iota_{m+e_j,m}} B_{m+e_j}\pr \xrightarrow{\beta(-e_j)} B_m\pr. \]

Recall that $\fj_{nm}: A_m \to A_n$ is the map induced on $K$-theory by the inclusion map $\iota_{nm}: B_m \to B_n$ of \eqref{eq:iota}.
For each $m \leq n$, we extend the map $\fj_{nm} \colon A_m \rightarrow A_n$ to a chain map
$ \fJ_{n m} \colon \mathcal{A}^{(m)} \rightarrow \mathcal{A}^{(n)} $
defined by
\[ \fJ^p_{nm} \left(  \sum_{\mu \in N_{p} } y_\mu e_\mu  \right) =
		\sum_{\mu \in N_{p} } \fj_{nm}  \left( y_\mu \right) e_\mu \; .  \]

\begin{lemma} \label{lemma:chain_limit}
$\fJ_{nm}$ is a chain map for all $m \leq n$. Futhermore, there is an isomorphism of chain complexes $\mathcal{B} \cong \varinjlim ( \A^{(m)}, \fJ_{nm})$.
\end{lemma}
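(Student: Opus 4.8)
The plan is to verify the chain-map condition first, and then identify the direct limit. For the first part, I would check that $\partial_p^{(n)} \circ \fJ^p_{nm} = \fJ^{p-1}_{nm} \circ \partial_p^{(m)}$ as maps $\mathcal{A}^{(m)}_p \to \mathcal{A}^{(n)}_{p-1}$. Since both sides are defined on the generators $y_\mu e_\mu$ by formulas built out of $\fj_{nm}$ and the maps $\id - \phi^m_{\mu_i}$ (respectively $\id - \phi^n_{\mu_i}$), it suffices to show that for each $j$, $\fj_{nm} \circ (\id - \phi^m_j) = (\id - \phi^n_j) \circ \fj_{nm}$ as maps $A_m \to A_n$, i.e.\ that $\fj_{nm}$ intertwines $\phi^m_j$ and $\phi^n_j$. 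Unwinding the definition of $\phi^m_j$, this reduces to a statement about the inclusion maps $\iota$ and the automorphisms $\beta(-e_j)$ of the inductive system $(B^\R_m, \iota)$: one needs the square
\[
\begin{matrix}
B^\R_m & \xrightarrow{\;\iota_{m+e_j,m}\;} & B^\R_{m+e_j} & \xrightarrow{\;\beta(-e_j)\;} & B^\R_m \\
\downarrow\iota_{nm} & & \downarrow\iota_{n+e_j,m+e_j} & & \downarrow\iota_{nm} \\
B^\R_n & \xrightarrow{\;\iota_{n+e_j,n}\;} & B^\R_{n+e_j} & \xrightarrow{\;\beta(-e_j)\;} & B^\R_n
\end{matrix}
\]
to commute at the level of $K\crr$. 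The left square commutes because the $\iota$'s form a directed system indexed by $(\Z^k, \leq)$, so any two composites agree; the right square commutes because $\beta$ is an action of $\Z^k$ and the connecting maps $\iota_{nm}$ are $\beta$-equivariant in the sense that $\beta(p)\circ\iota_{nm} = \iota_{n+p,m+p}\circ\beta(p)$, which is immediate from the formula $\beta(p)s_{\mu,\ell} = s_{\mu,\ell+p}$ and the defining identity \eqref{eq:iota}. Passing to $K\crr$ gives the required intertwining, and hence $\fJ_{nm}$ is a chain map.

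For the second part, I would exhibit the isomorphism $\mathcal{B} \cong \varinjlim(\mathcal{A}^{(m)}, \fJ_{nm})$ degreewise and check compatibility with the differentials. In degree $p$, $\mathcal{B}_p = \bigoplus_{N_p} A_\infty$ while $(\varinjlim \mathcal{A}^{(m)})_p = \varinjlim_m \bigoplus_{N_p} A_m$; since $N_p$ is a finite index set and direct limits commute with finite direct sums, the latter is canonically $\bigoplus_{N_p}\varinjlim_m A_m = \bigoplus_{N_p} A_\infty$, using $A_\infty = \varinjlim(A_m, \fj_{nm})$. This gives the degreewise isomorphism; one must then confirm it is a chain isomorphism, i.e.\ that under these identifications the limit of the differentials $\partial^{(m)}_p$ equals $\partial_p$ of \eqref{eq:partial}. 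Concretely, $\partial_p$ is built from the maps $(\id - (\beta_{\mu_i})^{-1}_*)$ on $A_\infty$, while $\varinjlim \partial^{(m)}_p$ is built from $\varinjlim_m(\id - \phi^m_{\mu_i})$, so it remains to see that the maps $\phi^m_j \colon A_m \to A_m$ are compatible with the $\fj_{nm}$'s (already shown above) and that their direct limit is exactly $(\beta_j)^{-1}_* \colon A_\infty \to A_\infty$. This last point follows because $\phi^m_j$ is induced by $B^\R_m \xrightarrow{\iota_{m+e_j,m}} B^\R_{m+e_j} \xrightarrow{\beta(-e_j)} B^\R_m$; in the direct limit the inclusion $\iota_{m+e_j,m}$ becomes the identity on $B^\R = \varinjlim B^\R_m$, so $\varinjlim_m \phi^m_j$ is just the automorphism of $K\crr(B^\R)$ induced by $\beta(-e_j) = \beta_j^{-1}$, as needed.

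The main obstacle I anticipate is purely bookkeeping rather than conceptual: one has to be careful that $\phi^m_j$ maps $K\crr(B^\R_m)$ into $K\crr(B^\R_m)$ (not into $K\crr(B^\R_{m+e_j})$) — this is why the definition precomposes the inclusion with $\beta(-e_j)$ — and then that the inductive-limit identification genuinely turns this ``inclusion-then-shift-back'' into ``shift'' on $B^\R$. A small amount of diagram-chasing with the partially ordered index set $\Z^k$ is required to see that $\phi^{n}_j \circ \fj_{nm} = \fj_{nm}\circ \phi^m_j$ and that the limit maps assemble correctly; once the equivariance of the $\iota$'s under $\beta$ is recorded, everything else is formal, exactly as in the proof of Lemma 3.12 of \cite{evans}, and the signs in \eqref{eq:partial} cause no trouble since they are identical in $\partial_p$ and $\partial^{(m)}_p$.
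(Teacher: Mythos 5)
Your proposal is correct and follows essentially the same route as the paper: both reduce the chain-map claim to the intertwining relation $\phi^n_j \circ \fj_{nm} = \fj_{nm} \circ \phi^m_j$, verified at the $C\sp*$-level (your two commuting squares amount to the paper's single relation $\beta(-e_j)\circ\iota_{n+e_j,m} = \iota_{nm}\circ\beta(-e_j)\circ\iota_{m+e_j,m}$, which the paper justifies via unique factorization of paths of degree $n-m+e_j$), and both identify the limit degreewise via continuity of $K\crr$ and then check that $\varinjlim \phi^m_j = (\beta_j)^{-1}_*$ on $A_\infty$ by the same $C\sp*$-level diagram. No gaps; the differences are purely presentational.
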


\begin{proof}
The claim that $\fJ_{nm}$ is a chain map is, by definition, the claim that the diagram
\[ \xymatrix{
\A_p^{(m)}			
\ar[r]^{\partial_p^{(m)}}		
\ar[d]^{\fJ_{nm}}	
& \A_{p-1}^{(m)}			
\ar[d]^{\fJ_{nm}}			\\							
\A_p^{(n)}	
\ar[r]^{\partial_p^{(n)}}
& \A_{p-1}^{(n)}	 	} \]
commutes for all $p$. Focusing on each summand of $\mathcal{A}_p^{(m)}= \bigoplus_{\mu \in N_p} A_m$, this is evidently equivalent to the commuting of the diagram
\[  \xymatrix{
A_m			
\ar[r]^{ \phi^m_{\mu_i} }	
\ar[d]^{\fj_{nm}}	
& A_m			
\ar[d]^{\fj_{nm}}			\\							
A_n
\ar[r]^{\phi^n_{\mu_i} }
& A_n	}\]
for all $i$. On the level of $C\sp*$-algebras, this follows from the relation 
\[ \beta(-e_j) \circ \iota_{n+e_j,m} = \iota_{n,m} \circ  \beta(-e_j) \circ \iota_{m+e_j, m},\] 
which holds thanks to the fact that every $\gamma \in \Lambda^{n-m+e_j}$ can be factored as 
$\gamma = \gamma_1 \gamma_2$ for a unique $\gamma_1 \in \Lambda^{e_j}, \gamma_2 \in \Lambda^{n-m}$.

Now we prove the second statement. Using the isomorphism $\varinjlim K\crr(B_m\pr) = K\crr(B\sr)$, we easily obtain $\varinjlim \A_p^{(m)} = \mathcal B_p$ for all $p$. For each $m \in \Z^k$, let $\fJ_m \colon \A^{(m)} \rightarrow \mathcal{B}$ be the map into the limit; this can also be described by
\[ \fJ_m^p \left( \sum_{\mu \in N_p} y_\mu e_\mu \right)
	= \sum_{\mu \in N_p} \fj_m (y_\mu e_\mu) \]
where $\fj_m \colon K\crr(B_m\pr) \rightarrow K\crr(B\sr)$ is the map induced by the inclusion $B_m\pr \hookrightarrow B\sr$.
It remains to show that the diagram
\[ \xymatrix{
\A_p^{(m)}			
\ar[r]^{\partial_p^{(m)}}		
\ar[d]^{\fJ_{m}}	
& \A_{p-1}^{(m)}			
\ar[d]^{\fJ_{m}}			\\							
\mathcal{B}_p
\ar[r]^{\partial_p}
& \mathcal{B}_{p-1}	}\] 
commutes, for which it suffices to show that
\[ \xymatrix{
A_m			
\ar[r]^{ \phi^m_{\mu_i} }	
\ar[d]^{\fj_{m}}	
& A_m			
\ar[d]^{\fj_{m}}			\\							
A_\infty
\ar[r]^{ (\beta_{\mu_i})_*^{-1} }
&A_\infty	} \]
commutes for all $i$. This follows from the definition of $\phi^m_{\mu_i}$ and the fact that the diagram
\[  \xymatrix{
B_m \ar[d]^{\iota_m} \ar[rr]^{ \beta(-e_i) }
&& B_{m-1}  \ar[d]^{\iota_{m-1}} \\
B  \ar[rr]^{ \beta(-e_i) }
&& B } \]
commutes on the level of $C\sp*$-algebras.
\end{proof}

The following lemma is the last key technical result that we need. The proof of the corresponding statement in the complex case is the bulk of the proof of Theorem~3.14 of \cite{evans}. The proof there is quite technical. Our proof will be so too, and here we have the additional complication of working in the category of $\CR$-modules, rather than the category of abelian groups. We mitigate some of this technicality through the use of the $e_\mu$ notation introduced above, as well as making explicit use of the concept of a chain homotopy, which Evans did not do.

In addition to the chain map $\fJ_{n m} \colon  \mathcal{A}^{(m)}  \rightarrow \mathcal{A}^{(n)} $ we also have the chain map   
$\fB_{nm} \colon  \mathcal{A}^{(m)}  \rightarrow \mathcal{A}^{(n)} $ for $m \neq n$ which is defined by the action $\beta(n-m)_* \colon K\crr(B_m) \rightarrow K\crr(B_n)$ extended to 
$\mathcal{A}^{(m)} = \bigoplus_{N_p} A_m = \bigoplus_{N_p} K\crr(B_m)$. 
It is routine to show that $\fB_{nm}$ is a chain map.
In fact, since $\beta(n-m) \colon B_n \rightarrow B_m$ is an isomorphism, $\fB_{nm}$ is an {isomorphism of chain complexes}.

\begin{lemma} \label{H_*=id}
For all $m \leq n$, 
the chain maps $\fB_{nm}$ and $\fJ_{nm}$ are chain homotopic. Thus
the induced map $(\fJ_{n m})_* \colon H_*( \mathcal{A}^{(m)} ) \rightarrow H_*( \mathcal{A}^{(n)} )$ is an isomorphism.
\end{lemma}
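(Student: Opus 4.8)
The plan is to produce an explicit chain homotopy between $\fJ_{nm}$ and $\fB_{nm}$; since $\fB_{nm}$ is an isomorphism of chain complexes, the assertion that $(\fJ_{nm})_*$ is an isomorphism on homology is then immediate. First I would reduce to a single step, i.e.\ to the case $n = m + e_j$. If $m = n_0 < n_1 < \dots < n_r = n$ with $n_i = n_{i-1} + e_{j_i}$, then $\fJ_{nm} = \fJ_{n n_{r-1}} \circ \dots \circ \fJ_{n_1 m}$ and $\fB_{nm} = \fB_{n n_{r-1}} \circ \dots \circ \fB_{n_1 m}$, because $\iota_{nm}$ and the shift $\beta(n-m)$ are both multiplicative under composition of steps. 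As chain homotopy of chain maps is preserved under composition on either side, it is enough to prove $\fJ_{m+e_j,m} \simeq \fB_{m+e_j,m}$ for each $j \in \{1,\dots,k\}$.

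For a single step, the key observation is that $\fJ_{m+e_j,m}$ factors through $\fB_{m+e_j,m}$. On each summand $A_m$ of $\A^{(m)}_p$, the map $\fJ_{m+e_j,m}$ acts by $\fj_{m+e_j,m} = (\iota_{m+e_j,m})_*$ and $\fB_{m+e_j,m}$ acts by $\beta(e_j)_*$; since $\phi^m_j = \beta(-e_j)_* \circ (\iota_{m+e_j,m})_*$ and $\beta(e_j)_* \circ \beta(-e_j)_* = \id$, we get $\fj_{m+e_j,m} = \beta(e_j)_* \circ \phi^m_j$. Hence $\fJ_{m+e_j,m} = \fB_{m+e_j,m} \circ \Phi_j$, where $\Phi_j \colon \A^{(m)} \to \A^{(m)}$ is the degree-$0$ map acting by $\phi^m_j$ on every summand $e_\mu$. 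That $\Phi_j$ is a chain map is precisely the identity $\phi^m_i \phi^m_j = \phi^m_j \phi^m_i$, which holds because a path of degree $e_i + e_j$ in $\Lambda$ factors uniquely — the same fact that makes $\partial^{(m)}$ a differential. It therefore suffices to prove $\Phi_j \simeq \id_{\A^{(m)}}$; composing such a homotopy with the chain map $\fB_{m+e_j,m}$ then witnesses $\fJ_{m+e_j,m} \simeq \fB_{m+e_j,m}$.

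Finally, $\A^{(m)}$ is the Koszul-type complex attached to the commuting operators $\id - \phi^m_1, \dots, \id - \phi^m_k$ on $A_m$, and $\Phi_j - \id = -(\id - \phi^m_j)$ is ``multiplication by the $j$-th slot,'' which I would kill with the standard contracting homotopy that inserts the index $j$. For $\mu \in N_p$ with $j \notin \mu$, let $\mu \vee j \in N_{p+1}$ denote the sorted insertion of $j$ and let $l(j,\mu)$ be the position $j$ occupies in $\mu \vee j$; define
\[ h_p(y_\mu e_\mu) = \begin{cases} (-1)^{l(j,\mu)}\, y_\mu\, e_{\mu \vee j}, & j \notin \mu,\\ 0, & j \in \mu, \end{cases}\]
for $y_\mu \in A_m$, extended additively. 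Verifying $\partial^{(m)}_{p+1} h_p + h_{p-1} \partial^{(m)}_p = \Phi_j - \id$ splits into two cases: when $j \notin \mu$, the ``off-diagonal'' contributions indexed by $e_{\mu^i \vee j}$ cancel in pairs between $\partial h$ and $h\partial$ (the insertion positions satisfy $l(j,\mu^i) = l(j,\mu) - 1$ for $i$ before the slot of $j$ and $l(j,\mu^i) = l(j,\mu)$ after it, supplying the cancelling sign), leaving exactly $-(\id - \phi^m_j)(y_\mu) e_\mu$; when $j \in \mu$, only the term of $\partial^{(m)}_p$ with $\mu_i = j$ survives $h_{p-1}$, and the signs $(-1)^{i+1}$ and $(-1)^{l(j,\mu^i)} = (-1)^{i}$ multiply to $-1$, again giving $-(\id - \phi^m_j)(y_\mu)e_\mu = (\Phi_j - \id)(y_\mu e_\mu)$.

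I expect the main obstacle to be exactly this sign-and-index bookkeeping — the $\CR$-module analogue of the technical heart of Evans' Theorem~3.14 — together with pinning down $\phi^m_i \phi^m_j = \phi^m_j \phi^m_i$, on which both the chain-map property of $\Phi_j$ and the differential property of $\partial^{(m)}$ rest. Reassuringly, no genuinely new $\CR$-module complications should arise: $h_p$ is built summand-by-summand out of the $\CR$-module maps already present in $\A^{(m)}$, so it is automatically a map of $\CR$-modules, and the whole verification proceeds one homogeneous degree at a time.
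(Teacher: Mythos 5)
Your proposal is correct and is essentially the paper's own argument: after the same reduction to a single step $n=m+e_j$, your composite homotopy $\fB_{m+e_j,m}\circ h$ coincides, up to an overall sign, with the paper's homotopy $\sigma$ (note $l(j,\mu)=\kappa(\mu)+1$), and the sign/index cancellation you outline is the same computation carried out there. Factoring $\fJ_{m+e_j,m}=\fB_{m+e_j,m}\circ\Phi_j$ and invoking the standard Koszul contracting homotopy for $\id-\phi^m_j$ is a tidy repackaging of that argument rather than a genuinely different route.
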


\begin{proof}
It suffices to prove the claim for $\fB_{m+e_j,m}$ and $\fJ_{m+e_j,m}$ for arbitrary $m, j$. 
We fix $m, j$ for the remainder of this proof and write $\fJ = \fJ_{m + e_j, m}$ and $\fB = \fB_{m+e_j, m}$.
For $\mu \in N_p$, let $\kappa(\mu)$ denote the cardinality of $\{ i \in \{1, \dots, p\} \mid \mu_i < j \}$.
Now let $\sigma^p \colon \A^{(m)}_p \rightarrow \A_{p+1}^{(m+e_j)}$ be the map defined by
\[ \sigma^p \left(   y_\mu e_\mu  \right)
	= \begin{cases} (-1)^{\kappa(\mu)} (\beta_j)_* (y_\mu) e_{\mu \cup \{j\} } & \text{if~} j \notin \mu \\
		0 & \text{if~} j \in \mu \;  \end{cases}  \; .\] 
This definition of $\sigma$ is inspired by the choice of $z$ in the proof of Theorem~3.14 in \cite{evans}.

We will show that for all $y \in \A^{(m)}_p$ we have (suppressing the superscripts for $\partial_p$)
\[ \partial_{p+1} \sigma^p (y) + \sigma^{p-1} \partial_p (y) = (\fB^p - \fJ^p) y \; , \]
so that $\sigma$ provides the desired chain homotopy between $\fB$ and $\fJ$.

It suffices by linearity to assume that $y = y_\mu e_\mu$ for some $\mu \in N_p$ and $y_\mu \in A_m$. First we consider the case $j \in \mu$; so $\mu_{\kappa(\mu) + 1} = j$. Then, writing  $\phi_j$ for $\phi^m_j$ and $\fj_j$ for $\fj_{m + e_j, m}$,
\begin{align*}
\partial_{p+1} \sigma^p (y_\mu e_\mu) &+ \sigma^{p-1} \partial_{p} (y_\mu e_\mu)  \\
	&= 0 + \sigma^{p-1} \left( \sum_{i=1}^p (-1)^{i+1} (\id - \phi_{\mu_i} ) (y_\mu) e_{\mu^i} \right) \\
	&= (-1)^{\kappa(\mu) } \sigma^{p-1} \left( (\id - \phi_j)(y_\mu) e_{\mu^{\kappa(\mu)+1}  } \right) 
						&&  \text{since $j \in \mu^i$ unless $i = \kappa(\mu) + 1$} \\ 
	&= (-1)^{\kappa(\mu) } (-1)^{\kappa(\mu)  }  (\beta_j)_* (\id - \phi_j)  (y_\mu) e_{\mu }  
						&&  \text{since $\kappa(\mu^{\kappa(\mu) + 1})  = \kappa(\mu) $} \\
	%&=   ((\beta_j)_* - (\beta_j)_* \phi_j)(y_\mu) e_{\mu } \\
	&=  ((\beta_j)_* - \fj_j)(y_\mu) e_{\mu }&& \text{since } \phi_j = (\beta_j)^{-1}_*  ( \iota_j)_* = (\beta_j)_*^{-1}  (\fj_j) \\
	&=  \left( \fB^p - \fJ^p \right) (y_\mu) e_{\mu }  \; .  
 \end{align*}
Now, consider the case $j \notin \mu$. Then
\begin{align*}
\sigma^{p-1} \partial_p (y_\mu e_\mu) 
	&= \sigma^{p-1} \left( \sum_{i =1}^p (-1)^{i+1} (\id- \phi_{\mu_i}) (y_\mu) e_{\mu^i} \right) \\
	&= \sum_{i = 1}^p (-1)^{i+1} (-1)^{\kappa(\mu^i) }  (\beta_j)_* (\id- \phi_{\mu_i})  (y_\mu) e_{\mu^i \cup \{j\}  } \\
\text{and~~~}
   \partial_{p+1} \sigma^p (y_\mu e_\mu) 
    	&= \partial_{p+1} \left(  (-1)^{\kappa(\mu)} (\beta_j)_* y_\mu e_{\mu \cup \{j\} }  \right)  \\
    	&= \sum_{i=1}^{p+1} (-1)^{\kappa(\mu) } (-1)^{i+1} (\id -\phi_{ (\mu \cup \{j\})_i }) (\beta_j)_* (y_\mu) e_{ \left( \mu \cup \{ j \} \right)^i  \; .}
\end{align*}

In this last sum, any term with $i \leq \kappa(\mu)$ is equal to
\[  (-1)^{\kappa(\mu)} (-1)^{i+1}  (\id - \phi_{\mu_i})  (\beta_j)_* (y_\mu) e_{\mu^i \cup \{j\} } 
				= (-1)^{\kappa(\mu^i) +1} (-1)^{i+1} (\id - \phi_{\mu_i}) (\beta_j)_* (y_\mu) e_{\mu^i \cup \{j\} }  \]
while any term with $i \geq \kappa(\mu) + 2$ is equal to
\[ (-1)^{\kappa(\mu)} (-1)^{i+1} (\id - \phi_{\mu_{i-1}} )  (\beta_j)_* (y_\mu) e_{\mu^{i-1} \cup \{j\} } 
				= (-1)^{\kappa(\mu ^{i-1})} (-1)^{i-1} (\id - \phi_{\mu_{i-1}} )  (\beta_j)_* (y_\mu) e_{\mu^{i-1} \cup \{j\} } \;. \]
{As the maps $\phi_{\mu_i}$ and $\beta_j^*$ 	commute for all $i, j$,}			
in the sum $\partial_{p+1} \sigma^p (y_\mu e_\mu) + \sigma^{p-1} \partial_{p}   (y_\mu e_\mu)$, all these terms cancel out, and the only term that remains is the summand of $\partial_{p+1} \sigma^p(y_\mu e_\mu)$ corresponding to  
$i = \kappa(\mu) + 1$. Therefore,
\begin{align*}
	 \partial_{p+1} \sigma^p (y_\mu e_\mu) + \sigma^{p-1} \partial_{p}   (y_\mu e_\mu)
	&=  (-1)^{ \kappa(\mu)} (-1)^{ \kappa(\mu)+2}   (\id - \phi_j) (\beta_j)_* (y_\mu) e_\mu \\
	&= (\id - \phi_j)  (\beta_j)_* (y_\mu) e_\mu \\
	&= \left( \fB^p - \fJ^p \right) (y_\mu) e_{\mu }. \qedhere
\end{align*}
\end{proof}

\begin{lemma} \label{B=A1}
$H_*(\mathcal{B}) \cong H_*( \mathcal{A}^{(0)}) $.
\end{lemma}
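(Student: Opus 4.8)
The plan is to combine the two preceding lemmas. By Lemma~\ref{lemma:chain_limit} we have an isomorphism of chain complexes $\mathcal B \cong \varinjlim(\mathcal A^{(m)}, \fJ_{nm})$, where the limit is taken over the directed set $(\N^k, \leq)$ (or equivalently $(\Z^k,\leq)$, translating by the action $\beta$). Since homology commutes with filtered colimits of chain complexes of $\CR$-modules — the functors $KO_i(-)$ and $KU_i(-)$ send filtered colimits of $C^*$-algebras to colimits of abelian groups, and exactness of filtered colimits in the category of $\CR$-modules follows from exactness in abelian groups applied componentwise — we obtain
\[ H_*(\mathcal B) \cong H_*\bigl( \varinjlim (\mathcal A^{(m)}, \fJ_{nm}) \bigr) \cong \varinjlim \bigl( H_*(\mathcal A^{(m)}), (\fJ_{nm})_* \bigr). \]

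First I would record this colimit interchange carefully, then invoke Lemma~\ref{H_*=id}: for every $m \leq n$ the map $(\fJ_{nm})_* \colon H_*(\mathcal A^{(m)}) \to H_*(\mathcal A^{(n)})$ is an isomorphism. A directed colimit of a system in which every bonding map is an isomorphism is canonically isomorphic to any single term of the system; taking the term $m = 0$ (which is a cofinal consideration is unnecessary here since every map is already invertible, but one should note $0$ lies in the directed set, or pass through a cofinal chain $0 \leq (1,\ldots,1) \leq (2,\ldots,2) \leq \cdots$ if one prefers $m \in \N^k$), we conclude
\[ \varinjlim \bigl( H_*(\mathcal A^{(m)}), (\fJ_{nm})_* \bigr) \cong H_*(\mathcal A^{(0)}). \]
Chaining the two displayed isomorphisms gives $H_*(\mathcal B) \cong H_*(\mathcal A^{(0)})$, as desired. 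One should also remark that all the maps in sight are $\CR$-module homomorphisms, so the resulting isomorphism is one of $\CR$-modules (and is graded, respecting the period-$8$ real and period-$2$ complex gradings).

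The only genuine point requiring care — and the step I would expect to be the main obstacle, though it is more bookkeeping than real difficulty — is justifying that $H_*(-)$ commutes with the filtered colimit at the level of $\CR$-modules rather than merely at the level of the underlying $KO$- and $KU$-groups. This is handled by observing that the colimit $\varinjlim \mathcal A^{(m)}$ is computed degreewise and componentwise (each component being $KO_i$ or $KU_i$ of the AF algebra $B_m\pr$, for which continuity of $K$-theory already gave $\varinjlim A_m \cong K\crr(B\sr)$ in the discussion preceding Lemma~\ref{E2=B}), that the boundary maps $\partial_p$ and $\partial_p^{(m)}$ are compatible with the structure maps $\fJ_m$ (shown in the proof of Lemma~\ref{lemma:chain_limit}), and that the natural transformations $r,c,\eta,\beta,\psi$ are themselves continuous, being induced by $*$-homomorphisms or $KK$-elements. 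Hence kernels, images, and the $\CR$-module operations all pass through the colimit, and the homology of the colimit complex is the colimit of the homologies as $\CR$-modules.
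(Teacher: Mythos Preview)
Your proof is correct and follows exactly the same approach as the paper: use Lemma~\ref{lemma:chain_limit} to identify $\mathcal{B}$ as the colimit of the $\mathcal{A}^{(m)}$, invoke continuity of homology to pass the colimit inside, then apply Lemma~\ref{H_*=id} to see that all bonding maps are isomorphisms so the colimit equals $H_*(\mathcal{A}^{(0)})$. The paper's version is terser---it simply cites ``continuity of the homology functor'' where you spell out the filtered-colimit argument and the $\CR$-module compatibility---but the substance is identical.
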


\begin{proof}
From Lemma~\ref{lemma:chain_limit} and the continuity of the homology functor, we have
$H_*(\mathcal{B}) = \lim_{m \to \infty} (H_*(\mathcal{A}^{(m)}), (\fJ_{nm})_* )$. However, Lemma~\ref{H_*=id} shows that the connecting maps of the limit are all isomorphisms.
Therefore $H_*(\mathcal{B}) \cong H_*( \mathcal{A}^{(0)}) $.
\end{proof}

\begin{thm} \label{sp-seq2}
Let $(\Lambda,  \gamma)$ be a $k$-graph with involution that is row-finite and has no sources. 
Then there exists a spectral sequence $\{ E^r, d^r  \}$ converging to $K\crr(C\sp*\sr(\Lambda,  \gamma))$ such that
$E_{p,q}^2 \cong H_p( \mathcal{A}^{(0)})$ and $E_{p,q}^{k+1} \cong E_{p,q}^{\infty}$.
\end{thm}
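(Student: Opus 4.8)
The plan is to assemble Theorem~\ref{sp-seq2} by combining the results established earlier in the section: the existence and convergence properties of the spectral sequence come from Theorem~\ref{sp-seq1}, and the combinatorial identification of the $E^2$ page comes from chaining together Lemma~\ref{E2=B}, Lemma~\ref{lemma:chain_limit}, Lemma~\ref{H_*=id}, and Lemma~\ref{B=A1}. So the bulk of the proof is essentially bookkeeping, and the only genuinely new assertion is the collapse statement $E^{k+1}_{p,q} \cong E^\infty_{p,q}$.

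First I would invoke Theorem~\ref{sp-seq1} to obtain a spectral sequence $\{E^r, d^r\}$ of $\CR$-modules converging to $K\crr(C\sp*\sr(\Lambda,\gamma))$ with $E^2_{p,q} = H_p(\Z^k, K\crr(B\sr))$. Then I would run the chain of isomorphisms: Lemma~\ref{E2=B} gives $H_p(\Z^k, K\crr(B\sr)) \cong H_p(\mathcal B)$ (as graded $\CR$-modules, since each step respects the $\CR$-structure — this is where the functoriality built into Theorem~\ref{sp-seq1} is used); Lemma~\ref{lemma:chain_limit} identifies $\mathcal B$ with $\varinjlim(\mathcal A^{(m)}, \fJ_{nm})$; Lemma~\ref{B=A1} (which itself rests on Lemma~\ref{H_*=id}) then yields $H_*(\mathcal B) \cong H_*(\mathcal A^{(0)})$. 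Composing, $E^2_{p,q} \cong H_p(\mathcal A^{(0)})$, as claimed.

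For the collapse statement, the key observation is the vanishing range already recorded after the statement of Theorem~\ref{sp-seq1}: $E^r_{p,q} = 0$ unless $p \in \{0,1,\dots,k\}$, because the filtration $0 \subseteq D_0 \subseteq \cdots \subseteq D_k = D_X$ has length $k$ (equivalently, the chain complex $\mathcal A^{(0)}$ is concentrated in homological degrees $0,\dots,k$). The differential $d^r$ on the $r$th page has bidegree $(-r, r-1)$, i.e.\ $d^r_{p,q} \colon E^r_{p,q} \to E^r_{p-r, q+r-1}$. For $r \geq k+1$ and any $p \in \{0,\dots,k\}$, the source or the target of $d^r_{p,q}$ lands outside the range $\{0,\dots,k\}$: indeed $p - r \leq k - (k+1) < 0$, so $E^r_{p-r,q+r-1} = 0$, and likewise the incoming differential $d^r_{p+r,q-r+1}$ has source $E^r_{p+r, \cdot}$ with $p + r \geq k+1 > k$, hence zero. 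Therefore every differential $d^r$ with $r \geq k+1$ is identically zero, and $E^{k+1}_{p,q} \cong E^{k+2}_{p,q} \cong \cdots \cong E^\infty_{p,q}$.

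The main (and only mild) obstacle is verifying that the chain of isomorphisms in the second paragraph genuinely respects the $\CR$-module structure at every stage, rather than just the underlying graded abelian groups — but this is exactly what the proofs of Lemmas~\ref{E2=B}–\ref{B=A1} were set up to deliver (each differential, connecting map, and chain homotopy there is built from $\CR$-module homomorphisms and natural transformations), together with the $\CR$-functoriality of Kasparov's spectral sequence established in Theorem~\ref{sp-seq1}. So no new work is needed; one simply remarks that all the isomorphisms in sight are $\CR$-module isomorphisms. I would close by noting that the index conventions ($d^r$ of bidegree $(-r, r-1)$, homology grading) match those in Theorem~\ref{sp-seq1} and in \cite{evans}, so the collapse argument is identical in form to the complex case.
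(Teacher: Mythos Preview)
Your proposal is correct and follows essentially the same approach as the paper: invoke Theorem~\ref{sp-seq1} for existence, chain Lemmas~\ref{E2=B} and~\ref{B=A1} (the latter already packaging Lemmas~\ref{lemma:chain_limit} and~\ref{H_*=id}) for the $E^2$ identification, and use the vanishing of $E^2_{p,q}$ for $p \notin \{0,\dots,k\}$ to kill all differentials from page $k+1$ onward. Your write-up is in fact more explicit than the paper's own three-line proof, particularly regarding the bidegree of $d^r$ and the $\CR$-module compatibility, but nothing substantive differs.
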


\begin{proof}
 Theorem~\ref{sp-seq1} gives the existence of the spectral sequence $\{E^r, d^r\}$. Lemmas~\ref{E2=B} and \ref{B=A1} combine to provide the isomorphism $E^2_{p,q} = H_p(\mathcal A^{(0)})$.  The isomorphism $E^{k+1}_{p,q} \cong E^\infty_{p,q}$ results from the fact that $E^2_{p,q} = H_p(\Z^k, k_q(B\sr )) = 0$ if $p\geq k+1$, so all of the differential maps $d^r_{p,q}$ are zero for $r \geq k+1$.
\end{proof}

\subsection{Notes on Computations using the Spectral Sequence}
\label{sec:lowrank}

We say that a $k$-graph $\Lambda$ is {\it finite} if the number of vertices is finite and the number of edges of degree $e_i$ is finite for each $i$. In this subsection we articulate Theorem~\ref{sp-seq2} more precisely in the specific cases of  a finite $k$-graph $\Lambda$ for $k = 1,2,3$.  That is, we identify the boundary maps of the chain complex $\mathcal A^{(0)}$, in order to  describe $K\crr(C\sp*\sr(\Lambda; \widetilde \gamma))$ in terms of the purely combinatorial data coming from the $k$-graph and its involution.

Throughout, we assume that $\Lambda$ is finite with involution $\widetilde \gamma$. We partition $\Lambda^0$ into three disjoint sets, $\Lambda^0 = G_f \sqcup G_1 \sqcup G_2$, where $\widetilde \gamma|_{G_f} = \id$ and $\widetilde \gamma(G_1) = G_2$.  Let $A$ denote the $\CR$-module
\[A := K\crr(\R)^{G_f} \oplus  K\crr(\C)^{G_1} .\] Recall that $A_0\pu = \Z^{\Lambda^0}$.  {Thanks to Theorem \ref{sp-seq2}, the $E^2$ page of our spectral sequence for $C\sp*\sr(\Lambda; \gamma)$ is given by the homology of the chain complex $\A^{(0)}$, all of whose component $\CR$-modules are direct sums of $A$.  

We first establish a handy lemma that will facilitate our description of the boundary maps of the chain complex $\mathcal A^{(0)}$.}
\begin{lemma} \label{CRMagic}
Let $M, N$ be two $\CR$-modules, which are each isomorphic to a finite direct sum of $K\crr(\R)$ and $K\crr(\C)$. Then any $\CR$-module homomorphism
$\alpha \colon M \rightarrow N$ is determined by the complex part $\alpha_0\pu$.
\end{lemma}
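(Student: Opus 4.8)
The plan is to reduce the problem to two separate claims: first, that a $\CR$-module homomorphism $\alpha\colon M \to N$ between free $\CR$-modules is determined by its restriction to the complex part $\alpha\pu$ in \emph{all} degrees; and second, that the full graded complex part $\alpha\pu$ is in turn determined by the single component $\alpha_0\pu$ in degree $0$. For the first claim I would exploit the long exact sequence \eqref{eq:CR-LES} together with the relation $r\beta^{-1}\colon KU_{i+1} \to KO_{i-1}$: in the sequence $KO_i \xrightarrow{\eta} KO_{i+1} \xrightarrow{c} KU_{i+1} \xrightarrow{r\beta^{-1}} KO_{i-1}$, for a free $\CR$-module the relevant maps $c$ are injective in the degrees where $\eta$ acts trivially, and more to the point, for $M$ a direct sum of (suspensions of) $K\crr(\R)$ and $K\crr(\C)$ one checks directly from Tables \ref{crtR} and \ref{crtC} that every element of $KO_*(M)$ can be written in terms of $\eta$ and $r$ applied to elements that are hit by $c$. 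Concretely: in $K\crr(\R)$, the generator of $KO_0$ maps under $c$ injectively into $KU_0$; $\eta$ of it generates $KO_1$ and $KO_2$; $KO_4$ is generated by an element $x$ with $c(x) = 2\cdot(\text{generator of }KU_4)$, but also $x = r\beta^2 c$ of the degree-$0$ generator via the relation $\xi = r\beta^2 c$; the remaining $KO$ groups vanish. In $K\crr(\C)$, $KO_n \cong \Z$ in even degrees and is the image under $r$ of a $KU_n$ class. So a $\CR$-module map $\alpha$, being compatible with $\eta$, $r$, $c$, $\beta$, is pinned down on all of $KO_*(M)$ once it is known on $KU_*(M)$.

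For the second claim — that $\alpha\pu$ in all (even) degrees is determined by $\alpha_0\pu$ — I would use Bott periodicity $\beta\colon KU_i \to KU_{i+2}$, which is a natural transformation appearing in the $\CR$-module structure, hence commutes with $\alpha$. Thus $\alpha\pu$ in degree $i+2$ equals $\beta \circ \alpha\pu_i \circ \beta^{-1}$, and iterating, every even-degree component is conjugate to $\alpha_0\pu$ by a power of $\beta$; the odd-degree components vanish because $KU_{\text{odd}}$ of a sum of $K\crr(\R)$'s and $K\crr(\C)$'s is zero. Combining the two claims, knowledge of $\alpha_0\pu$ determines $\alpha\pu$ in all degrees, which in turn determines $\alpha\po = \alpha^{KO}$ in all degrees, i.e. determines $\alpha$ entirely.

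I would organize the write-up as: (i) observe that by naturality of $\beta$ and vanishing of odd $KU$-groups, $\alpha\pu$ is determined by $\alpha_0\pu$; (ii) observe that $\psi$ is then also determined (it commutes with $\alpha\pu$), so the entire complex half of the $\CR$-module map is known; (iii) run through the structure of $K\crr(\R)$ and $K\crr(\C)$ degree by degree to show that each group $KO_n(M)$ is generated by elements of the form $c$-preimages, $\eta(\cdot)$, $r\beta^{-1}(\cdot)$, or $r\beta^2 c(\cdot)$ of elements in the complex part, and invoke compatibility of $\alpha$ with $\eta$, $r$, $c$, $\beta$, $\xi$ to conclude $\alpha\po$ is determined on $M$.

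The main obstacle I anticipate is step (iii) in degree $4$ (and its periodic translates): for $K\crr(\R)$, the map $c\colon KO_4 \to KU_4$ is multiplication by $2$, hence \emph{not} injective-onto-a-direct-summand, so one cannot recover $\alpha$ on $KO_4$ from commutativity with $c$ alone — one genuinely needs the relation $\xi = r\beta^2 c$ (equivalently that the $KO_4$ generator is $r\beta^2$ of the $KU_0$ generator) to pin it down. Care is also needed with $KO_4$ of $K\crr(\C)$, where one uses $r\colon KU_4 \to KO_4$; here $r$ is surjective (it is $\smh{1}{1}$ up to sign), so $\alpha$ on this $KO_4 \cong \Z$ is determined by $\alpha\pu_4$, which we already know. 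Once these even-degree-$4$ cases are handled, the vanishing of $KO_3, KO_5, KO_6, KO_7$ for $K\crr(\R)$ and of $KO_{\text{odd}}$ for $K\crr(\C)$ makes the remaining degrees automatic, and the direct-sum case follows since $\CR$-module maps out of and into finite direct sums are matrices of such maps.
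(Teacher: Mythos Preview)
Your proposal is correct, but it takes a noticeably more labor-intensive route than the paper's proof. The paper dispatches the lemma in a few lines by invoking a structural fact from Bousfield: $K\crr(\R)$ is a \emph{free} $\CR$-module on a single generator $b \in M\po_0$, and $K\crr(\C)$ is free on a single generator in $M\pu_0$. Freeness immediately reduces the question to: does $\alpha\pu_0$ determine where the generator goes? For $K\crr(\C)$ this is tautological, since the generator lives in $M\pu_0$. For $K\crr(\R)$ the generator lives in $M\po_0$, but $c_0 \colon N\po_0 \to N\pu_0$ is injective (being a direct sum of the maps $1$ and $\smv{1}{1}$ from Tables~\ref{crtR} and~\ref{crtC}), so $c(\alpha\po_0(b)) = \alpha\pu_0(c(b))$ pins down $\alpha\po_0(b)$. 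That is the entire argument.

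Your approach instead avoids citing the freeness result and verifies its consequences by hand: you show $\alpha\pu$ in all degrees is fixed by $\alpha\pu_0$ via Bott periodicity, and then walk through every degree of $KO_*$ for each of $K\crr(\R)$ and $K\crr(\C)$, exhibiting each group as generated by elements in the image of $r$, $\eta$, or $\xi = r\beta^2 c$ applied to classes already controlled. This is essentially a concrete unpacking of what ``free on one generator'' means. It buys self-containment (no appeal to Bousfield) at the cost of a degree-by-degree case check. One small remark: your anticipated obstacle in degree $4$ for $K\crr(\R)$ is milder than you suggest, since $c_4 = 2 \colon \Z \to \Z$ is still injective on $N\po_4$ (the target map is a direct sum of multiplication by $2$ and $\smv{1}{1}$), so the same argument as in degree $0$ already works there without needing $\xi$; but your $\xi$-based fix is also fine.
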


\begin{proof}
It suffices to consider the cases that $M$ is isomorphic to either $K\crr(\R)$ or to $K\crr(\C)$. 
Recall that the $\CR$-module $K\crr(\R)$ is free with a generator in the real part in degree 0 and $K\crr(\C)$ is free with a generator in the complex part in degree 0 (see Section~4.7 of \cite{bousfield90}). Thus the result is immediate in the case $M = K\crr(\C)$. 

Now suppose that $M = K\crr(\R)$ with generator $b \in M\po_0$. We must show that $\alpha\po_0(b)$ is uniquely determined by $\alpha\pu$.
We have $c(\alpha_0\po(b)) = \alpha\pu_0(c(b))$ where $c$ is the complexification map from $M\po$ to $M\pu$, or from $N\po$ to $N\pu$.
The complexification map $c$ in degree 0 is injective for both $K\crr(\R)$ and for $K\crr(\C)$. Thus, the formula $c(\alpha_0\po(b)) = \alpha\pu_0(c(b))$ determines $\alpha_0\po(b)$.
\end{proof}

Recall from Equation~\eqref{eq:matrices} that $M_i$ is the adjacency matrix of $\Lambda$ for the  edges of degree $e_i$. 
\begin{defn}
\label{def:phi-i}
For $1\leq i \leq k$, let $\rho^i \colon A \rightarrow A$ be the unique $\CR$-module homomorphism such that $(\rho^i)_0\pu \colon \Z^{\Lambda^0} \rightarrow \Z^{\Lambda^0}$ is represented by the matrix $B_i = \text{id} - M^t_i$. 
\end{defn}

\begin{rmk}
\label{rmk:beta-phi}
 Lemma \ref{lemma:chain_limit} above combines with 
\cite[Lemma 3.10]{evans} to reveal that the $\CR$-module homomorphism $(\beta_i^{-1})_*$ used in the definition of $\partial_p$ (see Equation \eqref{eq:partial} above) agrees with  $\rho^i$.  
\end{rmk}

Lemma~\ref{CRMagic} tells us that $(\rho^i)\po_j$ is completely determined by $(\rho^i)\pu_0$. The computation of 
$(\rho^i) \po_j$ from $(\rho^i)\pu_0$ follows the same method as indicated in \cite[Theorem 4.4]{boersema-MJM}.
In particular, if the complex part $(\rho^i) \pu_0 \in \en_\Z (\Z^{\Lambda^0})= \en_\Z (\Z^{G_f} \oplus \Z^{G_1} \oplus \Z^{G_2})$ is given by the matrix   $B_i = I - M_i^t$, then the functoriality of  $\widetilde \gamma$ implies that $\widetilde \gamma$ implements a bijection between the edges of color $i$ with source in $G_1$ and range in $G_2$, and the edges of color $i$ with source in $G_2$ and range in $G_1$.  Similarly, the edges with both source and range in $G_1$ are in bijection with the edges with source and range in $G_2$.  In other words, 
\[ B_i = \begin{pmatrix}  B_{11} & B_{12} & B_{12} \\ B_{21} & B_{22} & B_{23} \\ B_{21} & B_{23} & B_{22} \end{pmatrix}  \; . \]

It now follows that
the real part $(\rho^i) \po_0 \in %\en_\Z (\Z^{\Lambda^0}) =
 \en_\Z (\Z^{G_f} \oplus \Z^{G_1})$ is given by the matrix 
 \[ \begin{pmatrix} B_{11} & 2 B_{12} \\ B_{21} & B_{22} + B_{23} \end{pmatrix}  \;. \]
The other formulas for $(\rho^i) \po_j$ can be deduced from this easily; they are also given in \cite[Theorem 4.4]{boersema-MJM}.
For the convenience of the reader, we reproduce the relevant table in Figure \ref{fig1}.

\newcommand\TT{\rule{0pt}{2.6ex}} % Top strut
\newcommand\BB{\rule[-1.2ex]{0pt}{0pt}} % Bottom strut

\begin{center}
\begin{figure}
\label{fig1}
\begin{tabular}{|c|c| c  |  c  |  }
 \hline \hline
complex part & 0 \TT \BB& 
	$ \begin{pmatrix} B_{11}&  B_{12} & B_{12} \\ B_{21} & B_{22} & B_{23} \\ B_{21} & B_{23} & B_{22} \end{pmatrix}$ 
				& $  \Z^{|G_f|} \oplus \Z^{|G_1|} \oplus \Z^{|G_2|}  \rightarrow \Z^{|G_f|}  \oplus \Z^{|G_1|}\oplus \Z^{|G_2|} $ \\\cline{2-4}
& 1 \TT \BB &0 & $0$ \\
\hline \hline
real part  & 0 \TT \BB& $\begin{pmatrix} B_{11} &2 B_{12} \\ B_{21} & B_{22} + B_{23} \end{pmatrix}$
				& $  \Z^{|G_f|} \oplus \Z^{| G_1 |} \rightarrow \Z^{|G_f |} \oplus \Z^{| G_1 |}$ \\ \cline{2-4}
& 1 \TT \BB&  $B_{11}$ 
				& $ \Z_2^{|G_f|} \rightarrow \Z_2^{|G_f |}$ \\ \cline{2-4}
& 2 \TT \BB&  $\begin{pmatrix} B_{11} &  B_{12} \\ 0 & B_{22} - B_{23} \end{pmatrix} $
				& $  \Z_2^{|G_f|} \oplus \Z^{|G_1|} \rightarrow \Z_2^{|G_f |}  \oplus \Z^{|G_1|}$ \\ \cline{2-4}
& 3 \TT \BB &0 & $0$ \\ \cline{2-4}
& 4 \TT \BB& $\begin{pmatrix} B_{11} & B_{12} \\ 2B_{21} & B_{22} + B_{23} \end{pmatrix}$
				& $  \Z^{|G_f|} \oplus \Z^{| G_1 |} \rightarrow \Z^{|G_f|} \oplus \Z^{| G_1 |}$ \\ \cline{2-4}
& 5 \TT \BB&0 & $0$ \\  \cline{2-4}
& 6 \TT \BB& $B_{22} - B_{23}$ 
				& $ \Z^{|G_1|} \rightarrow \Z^{|G_1|}$ \\ \cline{2-4}
& 7 \TT \BB& 0& $0$ \\ 
 \hline \hline
\end{tabular}
\caption{Table for real $K$-theory}
\end{figure}
\end{center}

Once the maps $\rho^i$ are understood, Theorem~\ref{sp-seq2}  can be applied to develop the spectral sequence to compute $K\crr( C\sp*\sr( \Lambda, \widetilde \gamma))$. The following 
theorems articulate exactly how this looks in the cases $k = 1,2,3$. We note that for the case $k =1 $ we recover Theorem~4.1 of \cite{boersema-MJM}.

\begin{thm}[cf. Theorem~4.1 of \cite{boersema-MJM}]
Let $(\Lambda,  \gamma)$ be a finite $1$-graph with involution.
Then there is a 2-column spectral sequence that converges to $K\crr( C\sr^*(\Lambda,  \gamma))$ with 
$E_{p,q}^2$ equal to the homology of the chain complex $\mathcal A^{(0)}$,
\[ 0 \rightarrow A \xrightarrow{\partial_1} A \rightarrow 0, \]
where $\partial_1 = \rho^1$. 
\end{thm}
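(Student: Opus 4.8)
The plan is to derive the statement as the $k=1$ case of Theorem~\ref{sp-seq2}. First I would apply Theorem~\ref{sp-seq2} directly to $(\Lambda,\gamma)$, obtaining a spectral sequence $\{E^r,d^r\}$ of $\CR$-modules converging to $K\crr(C\sr^*(\Lambda,\gamma))$ with $E^2_{p,q}\cong H_p(\mathcal A^{(0)})$. When $k=1$ the index sets are $N_0=\{\star\}$ and $N_1=\{(1)\}$, each of cardinality $\binom{1}{0}=\binom{1}{1}=1$, and $N_p=\emptyset$ for $p\geq 2$; hence the chain complex $\mathcal A^{(0)}$ collapses to the two-term complex $0\to A_0\xrightarrow{\partial_1^{(0)}}A_0\to 0$, concentrated in homological degrees $0$ and $1$. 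Since $\Lambda$ is finite, Corollary~\ref{cor:AF} together with the stability and continuity of $K$-theory identifies $A_0=K\crr(B_0\pr)$ with $K\crr(\R)^{G_f}\oplus K\crr(\C)^{G_1}=A$, so $\mathcal A^{(0)}$ has exactly the asserted shape $0\to A\to A\to 0$.

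Next I would observe that the spectral sequence has only two columns: since $E^2_{p,q}=H_p(\mathcal A^{(0)})=0$ whenever $p\notin\{0,1\}$, and every differential $d^r$ lowers the filtration degree by $r\geq 2$, all higher differentials vanish and the sequence degenerates at $E^2$ (in agreement with the isomorphism $E^{k+1}_{p,q}\cong E^\infty_{p,q}$ of Theorem~\ref{sp-seq2}, which for $k=1$ reads $E^2\cong E^\infty$). Convergence then recovers the long exact sequence of \cite[Theorem~4.1]{boersema-MJM} linking $K\crr(C\sr^*(\Lambda,\gamma))$ with $\ker\partial_1$ and $\coker\partial_1$.

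The only substantive point is to verify that, under the identification $A_0\cong A$ above, the differential $\partial_1:=\partial_1^{(0)}$ agrees with $\rho^1$. Unwinding the definition of $\partial_1^{(0)}$ on the single summand indexed by $(1)\in N_1$ yields $\partial_1^{(0)}(y\,e_{(1)})=(\id-\phi^0_1)(y)\,e_\star$, so $\partial_1=\id-\phi^0_1$, where $\phi^0_1\colon K\crr(B_0\pr)\to K\crr(B_0\pr)$ is induced by the composition $B_0\pr\xrightarrow{\iota_{e_1,0}}B_{e_1}\pr\xrightarrow{\beta(-e_1)}B_0\pr$. On complexifications this is $B_0\xrightarrow{\iota_{e_1,0}}B_{e_1}\xrightarrow{\beta(-e_1)}B_0$; by \eqref{eq:iota} the inclusion induces $M_1^t$ on $KU_0=\Z^{\Lambda^0}$, while $\beta(-e_1)$ is the shift automorphism matching the summands $B_{e_1}(v)$ and $B_0(v)$ and so induces the identity, whence $(\phi^0_1)\pu_0=M_1^t$ and $(\id-\phi^0_1)\pu_0=\id-M_1^t$, which is precisely the matrix $B_1$ defining $(\rho^1)\pu_0$ in Definition~\ref{def:phi-i} (compare Remark~\ref{rmk:beta-phi}). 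Because $A$ is a finite direct sum of copies of $K\crr(\R)$ and $K\crr(\C)$ -- here is where finiteness of $\Lambda$ enters -- Lemma~\ref{CRMagic} shows that any $\CR$-module endomorphism of $A$ is determined by its degree-$0$ complex part; therefore $\id-\phi^0_1=\rho^1$, i.e.\ $\partial_1=\rho^1$. I expect this last step -- reconciling the map $\phi^0_1$ appearing in $\mathcal A^{(0)}$ with the combinatorially defined $\rho^1$, and promoting the $KU_0$-level identity to the full $\CR$-module level via Lemma~\ref{CRMagic} -- to be the only part of the argument that requires genuine care.
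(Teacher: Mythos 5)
Your proposal is correct and follows essentially the same route as the paper: invoke Theorem~\ref{sp-seq2}, note that $|N_0|=|N_1|=1$ collapses $\mathcal A^{(0)}$ to a two-term complex, and identify $\partial_1$ with $\rho^1$ by computing the degree-$0$ complex part and appealing to Lemma~\ref{CRMagic}. The only difference is that you verify directly from \eqref{eq:iota} that the inclusion induces $M_1^t$ on $KU_0$ (i.e.\ the content of Remark~\ref{rmk:beta-phi}, which the paper simply cites via Evans), which is a harmless and indeed welcome elaboration.
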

\begin{proof}
As $k=1$, we have $|N_0 | = |N_1| = 1$.  Therefore, in this case,
 Equation \eqref{eq:partial} simplifies to 
\[ \partial_1 =   \text{id} - (\beta_1)^{-1}_*  \, . \]
By Remark \ref{rmk:beta-phi}, $(\beta_1^{-1})_*$ agrees with the map whose complex part is represented by the matrix $M^t_1$. That is, $\partial_1 = \rho^1$.
\end{proof}

\begin{thm} \label{k=2--chaincomplex}
Let $(\Lambda,  \gamma)$ be a finite $2$-graph with involution.
Then there is a 3-column spectral sequence that converges to $K\crr( C\sp*\sr(\Lambda,  \gamma))$ with 
$E_{p,q}^2$ equal to the homology of the chain complex $\mathcal A^{(0)}$,
\[ 0 \rightarrow A \xrightarrow{\partial_2} A^2 \xrightarrow{\partial_1} A \rightarrow 0, \]
where
\begin{align*}
\partial_1 &= \begin{pmatrix} \rho^1 & \rho^2 \end{pmatrix} \\
\partial_2 &= \begin{pmatrix} -\rho^2 \\ \rho^1 \end{pmatrix}  \; .\\
\end{align*} 
\end{thm}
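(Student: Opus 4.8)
The plan is to specialize Theorem~\ref{sp-seq2} to the case $k=2$ and then unwind the chain complex $\mathcal{A}^{(0)}$ together with its differentials~\eqref{eq:partial} by hand. Theorem~\ref{sp-seq2} with $k=2$ gives at once a spectral sequence $\{E^r,d^r\}$ of $\CR$-modules converging to $K\crr(C\sp*\sr(\Lambda,\gamma))$ with $E^2_{p,q}\cong H_p(\mathcal{A}^{(0)})$ and $E^3_{p,q}\cong E^\infty_{p,q}$; and since $E^r_{p,q}=0$ whenever $p\notin\{0,1,2\}$ (as noted after Theorem~\ref{sp-seq1}), this spectral sequence has exactly three nonzero columns, which is the ``3-column'' assertion. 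So the only thing left to pin down is the explicit form of $\mathcal{A}^{(0)}$ and of $\partial_1,\partial_2$.

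For this I would first record that $|N_p|={{2}\choose{p}}$, so $N_0=\{\star\}$, $N_1=\{(1),(2)\}$, $N_2=\{(1,2)\}$, and that by Corollary~\ref{cor:AF} we have $A_0=K\crr(B_0\pr)\cong K\crr(\R)^{G_f}\oplus K\crr(\C)^{G_1}=A$; hence $\mathcal{A}^{(0)}$ has the shape $0\to A\xrightarrow{\partial_2}A^2\xrightarrow{\partial_1}A\to 0$ with the middle term indexed by $\{e_{(1)},e_{(2)}\}$. Next, by Remark~\ref{rmk:beta-phi} the map $(\beta_i)^{-1}_*$ appearing in~\eqref{eq:partial} has degree-$0$ complex part equal to $M_i^t$, so $\id-(\beta_i)^{-1}_*$ has degree-$0$ complex part $I-M_i^t$, and hence by Lemma~\ref{CRMagic} and Definition~\ref{def:phi-i} it equals $\rho^i$. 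Unwinding~\eqref{eq:partial} then gives, for $\mu=(1)$ and $\mu=(2)$ respectively, $\partial_1(y\,e_{(1)})=\rho^1(y)\,e_\star$ and $\partial_1(y\,e_{(2)})=\rho^2(y)\,e_\star$, so $\partial_1=\begin{pmatrix}\rho^1&\rho^2\end{pmatrix}$; and for $\mu=(1,2)$, using $(1,2)^1=(2)$ and $(1,2)^2=(1)$, the element $\partial_2(y\,e_{(1,2)})$ has $i=1$ summand $(-1)^{1+1}\rho^1(y)\,e_{(2)}$ and $i=2$ summand $(-1)^{2+1}\rho^2(y)\,e_{(1)}$, so $\partial_2=\begin{pmatrix}-\rho^2\\\rho^1\end{pmatrix}$, as claimed.

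Two minor loose ends remain: $\partial_1\partial_2=\rho^2\rho^1-\rho^1\rho^2=0$, which is already forced because $\mathcal{A}^{(0)}$ is a chain complex but can also be checked directly, as $M_1M_2=M_2M_1$ makes $I-M_1^t$ and $I-M_2^t$ commute and hence (via Lemma~\ref{CRMagic}) $\rho^1$ and $\rho^2$ commute; and the $\CR$-module structure on $E^2$ needs no extra argument, since the $\rho^i$ are $\CR$-module homomorphisms by construction. I do not expect a genuine obstacle here: the substance is all in Theorem~\ref{sp-seq2} and Remark~\ref{rmk:beta-phi}, and the only delicate point is the sign and index bookkeeping in~\eqref{eq:partial} --- in particular getting $(1,2)^1=(2)$, $(1,2)^2=(1)$ and the $(-1)^{i+1}$ factors right, which is exactly what produces the asymmetric minus sign on $\rho^2$ in $\partial_2$ rather than on $\rho^1$.
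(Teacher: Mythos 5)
Your proposal is correct and follows essentially the same route as the paper: specialize the general spectral sequence result, then unwind Equation \eqref{eq:partial} using Remark \ref{rmk:beta-phi} to identify $\id - (\beta_i)^{-1}_*$ with $\rho^i$, with the same index/sign bookkeeping ($(1,2)^1=(2)$, $(1,2)^2=(1)$) producing $\partial_1 = \begin{pmatrix}\rho^1 & \rho^2\end{pmatrix}$ and $\partial_2 = \begin{pmatrix}-\rho^2\\ \rho^1\end{pmatrix}$. The extra remarks (the direct check that $\partial_1\partial_2=0$ via commuting adjacency matrices, and the $\CR$-module structure) are fine but not needed beyond what the cited results already provide.
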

\begin{proof}
When $k=2$, we have $|N_1| = 2$ and $|N_2| = |N_0| =1$.  Therefore, Equation \eqref{eq:partial} and Remark \ref{rmk:beta-phi} tell us that $\partial_1: \A^2 \to \A$ and $\partial_2: \A \to \A^2$ are given by 
\[ \partial_1 = \sum_{\mu \in \{ 1, 2\}} (\text{id} - (\beta_\mu)^{-1}_*) = \begin{pmatrix} \rho^1 & \rho^2 \end{pmatrix}; \qquad \partial_2 = (-1)(\text{id} -( \beta_2)^{-1}_*) \oplus (\text{id} - (\beta_1)^{-1}_*) = \begin{pmatrix} -\rho^2 \\ \rho^1 \end{pmatrix}. \qedhere\]
\end{proof}

\begin{thm}
Let $(\Lambda,  \gamma)$ be a finite $3$-graph with involution.
Then there is a 4-column spectral sequence that converges to $K\crr( C\sr^*(\Lambda,  \gamma))$
with $E^2_{p,q}$ equal to the homology of the chain complex $\mathcal A^{(0)}$, 
\[ 0 \rightarrow A \xrightarrow{\partial_3} A^3 \xrightarrow{\partial_2} A^3 \xrightarrow{\partial_1} A \rightarrow  0 ,\]
where
\begin{align*}
\partial_1 &= \begin{pmatrix} \rho^1 & \rho^2 & \rho^3 \end{pmatrix} \\
\partial_2 &= \begin{pmatrix}  -\rho^2 & -\rho^3 & 0  \\ \rho^1 & 0 & - \rho^3 \\
						0 & \rho^1 & \rho^2 \end{pmatrix} \;  \\
\partial_3 &= \begin{pmatrix} \rho^3 \\ -\rho^2 \\ \rho^1 \end{pmatrix} \; . \\
\end{align*} 
\end{thm}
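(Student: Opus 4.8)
The plan is to mimic the proof of Theorem~\ref{k=2--chaincomplex} line for line, since for $k=3$ the statement again follows from Theorem~\ref{sp-seq2} together with a purely mechanical unwinding of the differential formula \eqref{eq:partial}. First I would apply Theorem~\ref{sp-seq2}: as $(\Lambda,\gamma)$ is a finite $3$-graph with involution, there is a spectral sequence $\{E^r,d^r\}$ converging to $K\crr(C\sp*\sr(\Lambda,\gamma))$ with $E^2_{p,q}\cong H_p(\mathcal A^{(0)})$ and $E^{4}_{p,q}\cong E^{\infty}_{p,q}$. The latter isomorphism is precisely the assertion that the spectral sequence occupies only the four columns $p\in\{0,1,2,3\}$, since $E^2_{p,q}=H_p(\Z^3,k_q(B\sr))=0$ for $p\notin\{0,1,2,3\}$. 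It then remains only to make the chain complex $\mathcal A^{(0)}$ explicit.

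Second, I would pin down the combinatorial data. Since $\binom{3}{0}=\binom{3}{3}=1$ and $\binom{3}{1}=\binom{3}{2}=3$, identifying $\bigoplus_{N_p}A$ with $A^{\binom{3}{p}}$ turns $\mathcal A^{(0)}$ into a complex of the claimed shape $0\to A\xrightarrow{\partial_3}A^3\xrightarrow{\partial_2}A^3\xrightarrow{\partial_1}A\to0$. I would fix once and for all the orderings $N_1=\{(1),(2),(3)\}$, $N_2=\{(1,2),(1,3),(2,3)\}$, $N_3=\{(1,2,3)\}$ and tabulate the deleted tuples $\mu^i\in N_{p-1}$ for each $\mu\in N_p$. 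I would also invoke Remark~\ref{rmk:beta-phi} to replace, in \eqref{eq:partial}, each block $\id-(\beta_{\mu_i})^{-1}_*$ (appearing in the slot indexed by $(\mu^i,\mu)$) with the $\CR$-module homomorphism $\rho^{\mu_i}$ of Definition~\ref{def:phi-i}, exactly as was done in the $k=1,2$ cases.

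Third comes the routine evaluation of $\partial_p(y_\mu e_\mu)=\sum_{i=1}^{p}(-1)^{i+1}\rho^{\mu_i}(y_\mu)\,e_{\mu^i}$ on basis elements. For $\partial_1$, since $(m)^1=\star$, one gets $\partial_1(y\,e_{(m)})=\rho^{m}(y)\,e_\star$, i.e.\ $\partial_1=\begin{pmatrix}\rho^1&\rho^2&\rho^3\end{pmatrix}$. For $\partial_3$, using $(1,2,3)^1=(2,3)$, $(1,2,3)^2=(1,3)$, $(1,2,3)^3=(1,2)$, one gets $\partial_3(y\,e_{(1,2,3)})=\rho^1(y)\,e_{(2,3)}-\rho^2(y)\,e_{(1,3)}+\rho^3(y)\,e_{(1,2)}$, i.e.\ the column $\begin{pmatrix}\rho^3\\-\rho^2\\\rho^1\end{pmatrix}$ in the above ordering of $N_2$. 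For $\partial_2$, the tuples $(1,2)^1=(2),(1,2)^2=(1),(1,3)^1=(3),(1,3)^2=(1),(2,3)^1=(3),(2,3)^2=(2)$ give the three columns $\begin{pmatrix}-\rho^2\\\rho^1\\0\end{pmatrix}$, $\begin{pmatrix}-\rho^3\\0\\\rho^1\end{pmatrix}$, $\begin{pmatrix}0\\-\rho^3\\\rho^2\end{pmatrix}$, which assemble into the stated $3\times3$ matrix.

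There is no real conceptual obstacle here: the substantive work — existence and $\CR$-functoriality of the spectral sequence, the identification $E^2_{p,q}\cong H_p(\mathcal A^{(0)})$, the identification of $(\beta_i^{-1})_*$ with $\rho^i$, and the fact that $\partial^{(0)}$ is a differential — is already carried out in Theorems~\ref{sp-seq1}--\ref{sp-seq2}, Remark~\ref{rmk:beta-phi}, and Lemmas~\ref{E2=B}--\ref{B=A1}. The only thing that can go wrong is a sign or indexing slip in unwinding \eqref{eq:partial}, so the care required is consistent bookkeeping of the ordering of the sets $N_p$ and of the $(-1)^{i+1}$ factors. As a sanity check one can verify $\partial_1\partial_2=0$ and $\partial_2\partial_3=0$ directly from the displayed matrices, using only that the $\rho^i$ pairwise commute — which holds because the adjacency matrices $M_i$ commute, so the complex parts $I-M_i^t$ commute, whence the $\rho^i$ commute by Lemma~\ref{CRMagic}.
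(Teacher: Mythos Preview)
Your proposal is correct and follows essentially the same approach as the paper: apply Theorem~\ref{sp-seq2}, fix the lexicographic ordering on $N_p$, and unwind the formula~\eqref{eq:partial} using Remark~\ref{rmk:beta-phi} to replace $\id-(\beta_{\mu_i})^{-1}_*$ by $\rho^{\mu_i}$. The paper in fact only writes out the case of $\partial_3$ explicitly and leaves $\partial_1,\partial_2$ to the reader, so your treatment is slightly more complete; your sanity check that $\partial_1\partial_2=\partial_2\partial_3=0$ via commutativity of the $\rho^i$ is a nice addition not present in the paper.
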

\begin{proof}
We justify the formula for $\partial_3$ and leave the remaining cases to the reader.  As $k= 3$, we have $|N_3| =1$ and $|N_2| = 3$. Write $N_3 = \{ \{ 1,2,3\} \}= \{\mu \}$. Given $1\leq i \leq 3$, there is a unique  $\lambda \in N_2$ with $\lambda = \mu^i$.  Ordering $N_2 = \{ \{ 1,2\}, \{1,3\}, \{2,3\} \}$ lexicographically, Equation \eqref{eq:partial} becomes
\[ \partial_3 = (-1)^{3+1}(\text{id} - (\beta_3)^{-1}_*) \oplus (-1)^{2+1}(  \text{id} - (\beta_2)^{-1}_*) \oplus (-1)^{1+1} (\text{id} - (\beta_1)^{-1}_*) = \begin{pmatrix} \rho^3 \\ -\rho^2 \\ \rho^1 \end{pmatrix}. \qedhere\]
\end{proof}

\section{Examples}
\label{sec:examples}
In this final section, we give three families of examples of real $C\sp*$-algebras that arise from rank-$2$ graphs with involution. 
{These examples showcase how one can leverage the $\CR$-module structure of real $K$-theory to completely determine $K\crr(C\sp*\sr(\Lambda, \gamma))$ on the basis of a  small amount of initial data.} In all three examples, our strategy follows the same general outline.  We begin by identifying the chain complex of Theorem \ref{k=2--chaincomplex} and computing its homology, which gives us the $E^2$ page of the spectral sequence.  {As $k=2$ in all of our examples, we have $E^\infty_{pq} = E^3_{pq}$ for all $p, q$; thus, our next step is to identify the differential $d^2$, which determines the $E^3= E^\infty$ page.  However, knowing the $E^\infty$ page does not completely describe $K\crr(C\sp*\sr(\Lambda, \gamma))$; rather, it gives a filtration (of at most 3 levels in the $k=2$ case) of $K\crr(C\sp*\sr(\Lambda, \gamma))$.

In our chosen examples, the $\CR$-module structure (and in particular the concept of the {\em core} of a $\CR$-module, as introduced by Hewitt in \cite{hewitt}) enable us to  describe $K\crr(C\sp*\sr(\Lambda, \gamma))$, up to at most two possibilities, using only the data from  the $E^2$ page. As the core will be a key tool in all of our computations in this section, we pause to discuss it in more detail.

To describe the core, recall that we have an  involution $\psi$ on $KU_*(C\sp*\sr(\Lambda, \gamma)) = K_*(C \sp *(\Lambda))$ which comes from the real structure on $C\sp*(\Lambda)$.
Moreover, since $KO_*(C\sp*\sr(\Lambda, \gamma))$ is a graded module over $KO_*(\R)$, for each $i$ we have $\eta_{i-1}: KO_{i-1}(C\sp*\sr(\Lambda, \gamma)) \to KO_i(C\sp*\sr(\Lambda,\gamma))$ which comes from multiplication by the nontrivial element in   $KO_1(\R) = \Z_2$.  Thus, we can define
\begin{align}
MO_i &= \rm{image}~\eta_{i-1} \colon KO_{i-1}(C\sr^*(\Lambda,  \gamma)) \rightarrow KO_{i}(C\sr^*(\Lambda,  \gamma)) \label{eq:core-gps}\\
\text{and} ~MU_i &= \frac{ \ker (1 -\psi_i)}{ \rm{image} (1 + \psi_i)}. \notag
\end{align}
Note that, since the $MO_i$ groups arise from the map $\eta$ which satisfies $2\eta = 0$, every $MO_i$ group is also 2-torsion. A straightforward computation will show that the $MU_i$ groups are also always 2-torsion.

The maps $\eta, c, r$ of $K\crr_*(C\sp*(\Lambda, \gamma))$ then naturally induce maps $\eta', c', r'$ on the groups $MO_i$ and $MU_i$, and we obtain a long exact sequence 
\begin{equation}
\label{eq:core-exact-seq}
 \cdots \rightarrow MO_i \xrightarrow{\eta'} MO_{i+1} \xrightarrow{c'} MU_{i} \xrightarrow{r'} MO_{i-2} \rightarrow \cdots \; \end{equation}
 (see \cite[Section 5.1]{hewitt} for details).
The core of the $\CR$-module $K\crr( C\sr^*(\Lambda,  \gamma) )$ is defined to consist of $KU_*(C\sr^*(\Lambda,  \gamma))$, the map $\psi$, and the groups and maps of the long exact sequence \eqref{eq:core-exact-seq}. 

Thus $KU_*(C\sr^*(\Lambda,  \gamma))$ is retained but $KO_*(C\sr^*(\Lambda,  \gamma))$ itself is dropped when we pass to the core; so on the face of it, we lose information. However, it follows from Theorem~4.2.1 of \cite{hewitt} that for two real $C\sp*$-algebras, $K\crr(A_1) \cong K\crr(A_2)$ if and only if the cores of $K\crr(A_1)$ and $K\crr(A_2)$ are isomorphic. Indeed, in our examples below, we compute some of the groups and maps in $K\crr(C\sr^*(\Lambda, \gamma))$ by using the spectral sequence, and then compute the core of the $\CR$-module to complete the identification of $K\crr(C\sr^*(\Lambda, \gamma))$. This saves the work of having to compute all of the groups of $KO_*(C\sr^*(\Lambda, \gamma))$ directly. 

 Notably, the factorization rules of the $k$-graph $\Lambda$ are irrelevant to the computations of the $E^2$ page.  Thus, the examples in this section support the conjecture \cite[Conjecture 5.11]{barlak-omland-stammeier} that the $K$-theory of a $k$-graph $C\sp*$-algebra should be (largely) independent of the choice of factorization rules.

In cases where we have multiple possibilities for $K\crr(C\sp*\sr(\Lambda, \gamma))$, the ambiguity comes from the fact that we have multiple possibilities for the $d^2$ map. In more complicated examples, it is also possible that the filtration of $K\crr(C\sp*\sr(\Lambda, \gamma))$ given on the $E^\infty$ page might not arise from a unique collection of $K$-theory groups.  We anticipate that a careful analysis of the impact of the factorization rules on $K\crr(C\sp*\sr(\Lambda, \gamma))$ may clarify these questions.}

The first family of examples we consider,  in Section~\ref{SectionExample1}, are $2$-graphs with only one vertex but an arbitrary number of edges of each type. In the second family of examples (Section~\ref{SectionExample2}) we consider 2-graphs with exactly three vertices, where the adjacency matrix is the same for both types of edges. Finally, in Section~\ref{SectionExample3}, we  consider a  family of 2-graphs with exactly three vertices but which have two distinct  adjacency matrices. Our computations result in a variety of different $\CR$-modules, many of which (but not all) have appeared in the literature before now or are direct sums of $\CR$-modules that have appeared before. 

All of the examples that we present have $K$-theory that is not consistent with a $1$-graph algebra, since they all have torsion in $KU_1( C\sp*(\Lambda))$ (see Corollary~4.3 of \cite{boersema-MJM}). In fact, in all of our examples, the complex $K$-theory is consistent with that of a tensor product of complex Cuntz algebras, $\mathcal{O}_m \otimes \mathcal{O}_n$. Therefore, in the case that the resulting real $C\sp*$-algebra is purely infinite and simple, \cite[Corollary 10.5]{brs} implies that they are all real forms of $\mathcal{O}_m \otimes \mathcal{O}_n$.

\subsection{A 1-vertex $2$-graph} \label{SectionExample1}

Let $\Lambda$ be a rank-2 graph with one vertex. Since all of the edges of degree $(1,0)$ and $(0,1)$ are just loops based at the vertex $v$, an involution $ \gamma$ on $\Lambda$ is just an involutive permutation on each of the two sets of loops, with the constraint that the permutation must be consistent with the factorization rules of $\Lambda$. 

In the special case that the factorization rules for $\Lambda$ are trivial and the involution $\gamma$ on $\Lambda$ is trivial, $C\sp*\sr(\Lambda,  \gamma) = C\sp*\sr(\Lambda, \id)$ is a tensor product of real Cuntz algebras:
\[C\sr^*(\Lambda,  \id) = C\sr^*(\Lambda) \cong C\sr^*(\Lambda_1 \times \Lambda_2) 
	= C\sr^*(\Lambda_1) \otimes\sr C\sr^*(\Lambda_2) \cong  \mathcal{O}_{m}\pr \otimes\sr \mathcal{O}_{n} \pr \]
by \cite[Corollary 3.5(iv)]{kp}. The $K$-theory for such tensor products of real Cuntz algebras is known from \cite{boersema2002}.
We will here compute $K\crr(C\sp*\sr(\Lambda, \gamma))$ more generally and find that essentially the same $K$-theory appears as in the tensor products, regardless of the factorization rules and the involution $\gamma$.

We first describe the specific $\CR$-modules that will arise, which we denote as $R_g$ for $g$ odd ($g \geq 3$)
and $S_g, T_g$ for $g$ even ($g \geq 2$). The groups in these $\CR$-modules are given below; in these examples, the natural transformations $r, c, \eta, \omega, \psi$ which complete the data of the $\CR$-module are completely determined by the given groups, and the relations among the homomorphisms mandated by the $\CR$-structure \eqref{CR Theorems} \eqref{CRMagic} and the long exact sequence \eqref{eq:CR-LES} linking the real and complex parts of a $\CR$-module.  The precise formulas for these natural transformations are recorded in \cite[Section 5.2]{boersema2002}.

\[ \begin{array}{|c|c|c|c|c|c|c|c|c|c|}  
\hline  \hline 
(g ~\text{odd}) & \makebox[1cm][c]{0} & \makebox[1cm][c]{1} & 
\makebox[1cm][c]{2} & \makebox[1cm][c]{3} 
& \makebox[1cm][c]{4} & \makebox[1cm][c]{5} 
& \makebox[1cm][c]{6} & \makebox[1cm][c]{7} \\
\hline  \hline
(R_g)\po_i
& \Z_g & \Z_g & 0 & 0 & \Z_g & \Z_g & 0 & 0  \\
\hline  
(R_g)\pu_i
& \Z_{g} & \Z_{g} & \Z_{g} & \Z_{g} & \Z_{g} & \Z_{g} & \Z_{g} & \Z_{g}\\
\hline  
\hline
\end{array} \]

\[\begin{array}{|c|c|c|c|c|c|c|c|c|c|}  
\hline  \hline 
(g ~ \text{even}) & \makebox[1cm][c]{0} & \makebox[1cm][c]{1} & 
\makebox[1cm][c]{2} & \makebox[1cm][c]{3} 
& \makebox[1cm][c]{4} & \makebox[1cm][c]{5} 
& \makebox[1cm][c]{6} & \makebox[1cm][c]{7} \\
\hline  \hline
(S_g)\po_i
& \Z_g & \Z_{2g} & \Z_2^2 & \Z_2^2 & \Z_{2g} & \Z_g & 0 & 0  \\
\hline  
(S_g)\pu_i
& \Z_{g} & \Z_{g} & \Z_{g} & \Z_{g} & \Z_{g} & \Z_{g} & \Z_{g} & \Z_{g}\\
\hline  
\hline
\end{array} \]

\[ \begin{array}{|c|c|c|c|c|c|c|c|c|c|}  
\hline  \hline 
(g \equiv 0 ~\text{mod}~ 4) & \makebox[1cm][c]{0} & \makebox[1cm][c]{1} & 
\makebox[1cm][c]{2} & \makebox[1cm][c]{3} 
& \makebox[1cm][c]{4} & \makebox[1cm][c]{5} 
& \makebox[1cm][c]{6} & \makebox[1cm][c]{7} \\
\hline  \hline
(T_g)\po_i
& \Z_g & \Z_2 \oplus \Z_g & \Z_2^3 & \Z_2^3 & \Z_2 \oplus \Z_g & \Z_{g} & 0 &0  \\
\hline  
(T_g)\pu_i
& \Z_{g} & \Z_{g} & \Z_{g} & \Z_{g} & \Z_{g} & \Z_{g} & \Z_{g} & \Z_{g}\\
\hline  
\hline
\end{array} \]

\[ \begin{array}{|c|c|c|c|c|c|c|c|c|c|}  
\hline  \hline 
(g \equiv 2 ~\text{mod}~ 4) & \makebox[1cm][c]{0} & \makebox[1cm][c]{1} & 
\makebox[1cm][c]{2} & \makebox[1cm][c]{3} 
& \makebox[1cm][c]{4} & \makebox[1cm][c]{5} 
& \makebox[1cm][c]{6} & \makebox[1cm][c]{7} \\
\hline  \hline
(T_g)\po_i
& \Z_g & \Z_2 \oplus \Z_g & \Z_2 \oplus \Z_4 & \Z_2 \oplus \Z_4 & \Z_2 \oplus \Z_g & \Z_{g} & 0 &0  \\
\hline  
(T_g)\pu_i
& \Z_{g} & \Z_{g} & \Z_{g} & \Z_{g} & \Z_{g} & \Z_{g} & \Z_{g} & \Z_{g}\\
\hline  
\hline
\end{array} \]

For later reference during the calculations in this section, we also record the groups $MO_i$ and $MU_i$ corresponding to the $\CR$-modules $S_g$ and $T_g$ in the tables below. Recall that the core of a $\CR$-module $M$ consists of just the complex part of $M$ and the groups of $MO_i$ and $MU_i$ (and the relevant natural transformations).
For $R_g$, we will not make use of the core but we note for completeness that $MO_i = 0$ and $MU_i = 0$ for all $i$.

\[\begin{array}{|c|c|c|c|c|c|c|c|c|c|}  
\hline  \hline 
\text{Core of~} S_g \text{~for~} g ~\text{even} & \makebox[1cm][c]{0} & \makebox[1cm][c]{1} & 
\makebox[1cm][c]{2} & \makebox[1cm][c]{3} 
& \makebox[1cm][c]{4} & \makebox[1cm][c]{5} 
& \makebox[1cm][c]{6} & \makebox[1cm][c]{7} \\
\hline  \hline
MO_i
& 0 & \Z_2 & \Z_2^2 & \Z_2^2 & \Z_2^2 & \Z_2 & 0 &  0  \\ \hline
MU_i
& \Z_2 & \Z_2 & \Z_2 & \Z_2 & \Z_2 & \Z_2 & \Z_2 & \Z_2  \\
\hline  
\hline
\end{array} \]

\[ \begin{array}{|c|c|c|c|c|c|c|c|c|c|}  
\hline  \hline 
\text{Core of~} T_g \text{~for~} g ~\text{even}  & \makebox[1cm][c]{0} & \makebox[1cm][c]{1} & 
\makebox[1cm][c]{2} & \makebox[1cm][c]{3} 
& \makebox[1cm][c]{4} & \makebox[1cm][c]{5} 
& \makebox[1cm][c]{6} & \makebox[1cm][c]{7} \\
\hline  \hline
MO_i 
& 0 & \Z_2 & \Z_2 & \Z_2^2 & \Z_2 & \Z_2 & 0 &  0  \\ \hline
MU_i 
& \Z_2 & \Z_2 & \Z_2 & \Z_2 & \Z_2 & \Z_2 & \Z_2 & \Z_2  \\
\hline  
\hline
\end{array} \]

Given $m, n \in \N_{\geq 2}$, define  $g = \gcd(m-1, n-1)$. From Section~5.2 of \cite{boersema2002} we have 
\begin{equation}
\label{eq:K-tensor}
K\crr( \mathcal{O}_m\pr \otimes\sr \mathcal{O}_n\pr) \cong \begin{cases} R_g & \text{$g$ odd}  \\ 
			S_g & m-1 \equiv n-1 \equiv 2 \pmod 4 \\ 
			T_g & m-1 \equiv 0  \text{~or~} n-1 \equiv 0 \pmod 4. \\     \end{cases}
\end{equation}
In particular, there  are isomorphisms $R_g \cong K\crr(\mathcal{O}\pr_{g+1} \otimes\sr \mathcal{O} \pr_{g+1})$ for $g$ odd;
$T_g \cong K\crr(\mathcal{O}\pr_{g+1} \otimes\sr \mathcal{O} \pr_{g+1})$ for $g \equiv 0 \pmod 4$; and $S_g \cong K\crr(\mathcal{O}\pr_{g+1} \otimes\sr \mathcal{O} \pr_{g+1})$ for $g \equiv 2 \pmod 4$.

\begin{prop}
\label{prop:one-vertex-2-graph}
Let $\Lambda$ be a rank-2 graph with one vertex. Let $m$ be the number of edges of degree $(1,0)$ and let $n$ be the number of edges of degree $(0,1)$. Assume $m, n \geq 2$. Let ${\gamma}$ be an involution on $\Lambda$, and write $g = \gcd(m-1,n-1)$. Then
\[ K\crr( C\sr^*(\Lambda,  \gamma)) \cong \begin{cases}  
	R_g & \text{~if $g$ is odd}, \\
	S_g \text{~or~}  T_g & \text{~if $g$ is even.} \end{cases} \]
\end{prop}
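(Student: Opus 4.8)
The plan is to run the machinery of Theorem~\ref{k=2--chaincomplex} with the combinatorial data of the one-vertex $2$-graph. Since $|\Lambda^0| = 1$, the vertex is necessarily fixed by $\gamma$, so $G_f = \Lambda^0$, $G_1 = G_2 = \emptyset$, and the $\CR$-module $A$ of Theorem~\ref{k=2--chaincomplex} is simply $K\crr(\R)$. The adjacency ``matrices'' are the scalars $M_1 = m$ and $M_2 = n$, so by Definition~\ref{def:phi-i} the maps $\rho^1, \rho^2 \colon K\crr(\R) \to K\crr(\R)$ have complex part $(\rho^1)_0\pu = 1 - m$ and $(\rho^2)_0\pu = 1 - n$ (multiplication by these integers). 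By Lemma~\ref{CRMagic}, each $\rho^i$ is then completely determined; the real parts in each degree can be read off from Figure~\ref{fig1} with $B_{11} = 1 - M_i$ and all other blocks zero (so e.g.\ $(\rho^i)_1\po$ is multiplication by $1-M_i$ on $\Z_2$, hence $0$ if $M_i$ is odd and the identity if $M_i$ is even, etc.).

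The chain complex $\mathcal A^{(0)}$ is then
\[ 0 \to K\crr(\R) \xrightarrow{\partial_2} K\crr(\R)^2 \xrightarrow{\partial_1} K\crr(\R) \to 0 \]
with $\partial_1 = (\rho^1\ \ \rho^2)$ and $\partial_2 = \binom{-\rho^2}{\rho^1}$. The first main step is to compute $H_0, H_1, H_2$ of this complex, degree by degree. In each fixed degree $q$, the complex becomes a complex of abelian groups built from $KO_q(\R)$, and the entries of $\partial_1, \partial_2$ become multiplication by $1-m$ and $1-n$ (or their reductions mod~$2$). For the degrees where $KO_q(\R) \in \{\Z, \Z_2, 0\}$, these homology computations are elementary: over $\Z$ the relevant quantity is $\gcd(m-1, n-1) = g$ and one gets $\coker(\partial_1) \cong \Z_g$ and $\ker(\partial_2) = 0$ (the latter because $m-1, n-1$ are not both zero, as $m, n \geq 2$); over $\Z_2$ one gets answers depending only on the parities of $m-1$ and $n-1$, which (since $g = \gcd(m-1,n-1)$) are governed by whether $g$ is odd or even. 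This yields the $E^2 = E^\infty$ page (note $d^2$ must vanish here: $E^2_{0,q}$ is a quotient of $KO_q(\R)$ and $E^2_{2,q}$ a subgroup of $KO_{q-2}(\R)$, and inspecting degrees one checks no nonzero $d^2 \colon E^2_{2,q} \to E^2_{0,q-1}$ can occur — alternatively, $E^2_{2,q} = 0$ in every relevant degree).

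The second main step is to assemble $K\crr(C\sp*\sr(\Lambda,\gamma))$ from the three-step filtration given by $E^\infty$. Here I would use the comparison with $K\crr(\mathcal O_{g+1}\pr \otimes_\R \mathcal O_{g+1}\pr)$: by Equation~\eqref{eq:K-tensor} this tensor product realizes $R_g$ (for $g$ odd), $S_g$ or $T_g$ (for $g$ even), and since $\gcd(g, g) = g$ it has the \emph{same} $E^2$ page and the same filtration data as our $C\sp*\sr(\Lambda,\gamma)$. For the complex part $KU_*$, the filtration has no extension ambiguity (the only groups appearing are cyclic and the argument is the usual one for Cuntz-algebra-type $K$-theory), giving $KU_i(C\sp*\sr(\Lambda,\gamma)) \cong \Z_g$ for all $i$. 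For the real part, rather than resolving all the extension problems in $KO_*$ directly, I would invoke the \emph{core}: the groups $MO_i$ and $MU_i$ can be computed from $KU_*$ together with the map $\psi$ and the data already in hand, and they are all $2$-torsion. The core exact sequence~\eqref{eq:core-exact-seq}, together with the fact (Theorem~\ref{CR Theorems}(2) via \cite[Theorem 4.2.1]{hewitt}) that the core is a complete invariant for $K\crr$, then pins down $K\crr(C\sp*\sr(\Lambda,\gamma))$. When $g$ is odd one finds $MO_i = MU_i = 0$, forcing $K\crr \cong R_g$ exactly; when $g$ is even the core is consistent with exactly two $\CR$-modules, namely $S_g$ and $T_g$, hence the stated dichotomy.

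The main obstacle I anticipate is the last step in the $g$-even case: controlling the extension problems in $KO_*$ and showing that the remaining ambiguity is \emph{exactly} between $S_g$ and $T_g$ (and not something outside the tensor-product list). The reason the ambiguity genuinely persists is that the $E^\infty$ page — equivalently, the core data computed so far — does not detect the $d^2$ differential finely enough, and for $g$ even the two possible ``shapes'' of the $d^2$-induced extension correspond precisely to the distinction between $S_g$ (realized when $g \equiv 2 \bmod 4$) and $T_g$ (realized when $g \equiv 0 \bmod 4$). Pinning this down cleanly requires carefully comparing the core of our $\CR$-module against the tabulated cores of $S_g$ and $T_g$ and arguing that these are the only $2$-torsion-core-compatible possibilities; everything else (the $E^2$-page computation, the complex part, the odd-$g$ case) should be routine given the tools assembled in Sections~\ref{sec:spectral-sequence} and~\ref{sec:lowrank}.
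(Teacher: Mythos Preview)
Your overall strategy is correct and matches the paper's: compute the chain complex $\mathcal A^{(0)}$ with $A = K\crr(\R)$, read off the $E^2$ page degree by degree, and for $g$ even use the core exact sequence to narrow the possibilities to $S_g$ and $T_g$. The odd case and the endgame for the even case are essentially as in the paper.

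However, there is a genuine error in the middle. You assert that $d^2$ vanishes everywhere, giving $E^2 = E^\infty$, and justify this by saying ``$E^2_{2,q} = 0$ in every relevant degree.'' This is true only when $g$ is odd. When $g$ is even, both $m-1$ and $n-1$ are even, so in real degrees $q=1$ and $q=2$ the chain complex becomes $\Z_2 \xrightarrow{0} \Z_2^2 \xrightarrow{0} \Z_2$, and hence $E^2_{2,1} = E^2_{2,2} = \Z_2$. The differential $d^2_{2,1}\colon E^2_{2,1} \to E^2_{0,2}$ (note the bidegree is $(-2,+1)$, not $(-2,-1)$ as you wrote) is then a map $\Z_2 \to \Z_2$ which may or may not vanish, and this is precisely the source of the $S_g$/$T_g$ ambiguity. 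You seem to recognise this in your final paragraph, but it contradicts your earlier claim that $E^2 = E^\infty$; the proposal is internally inconsistent on this point.

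A related confusion: your parenthetical ``$S_g$ (realized when $g \equiv 2 \bmod 4$) and $T_g$ (realized when $g \equiv 0 \bmod 4$)'' describes which module the particular tensor product $\mathcal O_{g+1}\pr \otimes\sr \mathcal O_{g+1}\pr$ realizes, not which one $C\sp*\sr(\Lambda,\gamma)$ realizes for general $(\Lambda,\gamma)$ with $\gcd(m-1,n-1)=g$. The proposition does not resolve the even-$g$ dichotomy, and neither the paper's argument nor yours can: the spectral sequence input depends only on $m,n$, but the paper exhibits choices of $m,n$ with the same even $g$ for which both $S_g$ and $T_g$ occur (via Equation~\eqref{eq:K-tensor}). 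So the ambiguity is real, and your job is only to show it is \emph{at most} these two --- which the core argument does, once you correctly allow $d^2_{2,1}$ to be either zero or nonzero.
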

Before we begin the proof of Proposition \ref{prop:one-vertex-2-graph}, we pause to make a few comments.   First, note that if $g$ is odd, then $K\crr(C\sr^*(\Lambda,  \gamma))$ depends only on the number of edges of each color, not on the choice of involution or the  factorization rules defining $\Lambda$.  In particular,
Proposition \ref{prop:one-vertex-2-graph}  gives more evidence in support of \cite[Conjecture 5.11]{barlak-omland-stammeier}, which asserts that the $K$-theory of a one-vertex $k$-graph $C\sp*$-algebra should be independent of the factorization rules.

We also wish to remark on the uncertainty of the  statement  of Proposition \ref{prop:one-vertex-2-graph}
regarding the even case.  As $\mathcal O_n$ is the graph $C\sp*$-algebra of the one-vertex graph $E_n$ with $n$   edges, \cite[Corollary 3.5]{kp} tells us that there exists a $2$-graph 
$\Lambda = E_n \times E_m$ such that $|\Lambda^{(1,0)}|=m, |\Lambda^{(0,1)}| = n$ and $C\sp*(\Lambda) \cong \mathcal O_n \otimes \mathcal O_m$.  Therefore, applying Equation \eqref{eq:K-tensor} to $C\sp*\sr(\Lambda,  \id)$,
we see that both $S_g$ and $T_g$ can appear as the $K$-theory of a 2-graph of the type discussed in Proposition \ref{prop:one-vertex-2-graph}. However, we will see in the calculation below that in general it is not clear how to determine which $\CR$-module appears from the spectral sequence. 

We also have the following Corollary to Proposition \ref{prop:one-vertex-2-graph}.
 \begin{cor}
Fix $m, n \in \N_{\geq  2}$; a one-vertex 2-graph $\Lambda$ with $ |\Lambda^{(1,0)}|=m, |\Lambda^{(0,1)}| = n $; and an involution $ \gamma $ on $\Lambda$. If $g = \gcd(m-1, n-1)$ is odd and $C\sp*(\Lambda)$ is simple, then $C\sp*\sr(\Lambda,  \gamma) \cong C\sp*\sr(\Lambda, \rm{id})$.
 \end{cor}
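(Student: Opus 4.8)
The plan is to combine Proposition~\ref{prop:one-vertex-2-graph} with the classification statement Theorem~\ref{CR Theorems}(4). Since $\Lambda$ has a single vertex $v$, that vertex is fixed by every involution, so Proposition~\ref{prop:one-vertex-2-graph} applies both to $(\Lambda,\gamma)$ and to $(\Lambda,\id)$; as $g$ is odd, it yields $K\crr(C\sp*\sr(\Lambda,\gamma))\cong R_g\cong K\crr(C\sp*\sr(\Lambda,\id))$. Writing $A=C\sp*\sr(\Lambda,\gamma)$ and $A'=C\sp*\sr(\Lambda,\id)$, we thus have a $\CR$-module isomorphism $\Phi\colon K\crr(A)\to K\crr(A')$. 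To invoke Theorem~\ref{CR Theorems}(4) it remains to check two things: that $\C\otimes A=\C\otimes A'=C\sp*(\Lambda)$ is a purely infinite simple unital Kirchberg algebra (satisfying the UCT), and that $\Phi$ can be chosen to carry $[1_A]$ to $[1_{A'}]$.

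For the first point, $C\sp*(\Lambda)$ is unital with unit $s_v$ since $\Lambda^0=\{v\}$; it is separable, nuclear, and satisfies the UCT, as for all row-finite $k$-graph $C\sp*$-algebras; and it is simple by hypothesis. Moreover, since $m,n\geq 2$, relation (CK4) expresses $1=s_v=\sum_{\lambda\in v\Lambda^{(1,0)}}s_\lambda s_\lambda^*$ as a sum of $m\geq 2$ mutually orthogonal projections each Murray--von Neumann equivalent to $1$; hence $1$ is properly infinite and $C\sp*(\Lambda)$ is not AF. By the AF-or-purely-infinite dichotomy for simple $k$-graph $C\sp*$-algebras it is therefore purely infinite, as needed.

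For the second point, the key observation is that $c\colon KO_0(-)\to KU_0(-)$ is injective on any $\CR$-module isomorphic to $R_g$: because $g$ is odd, the $2$-torsion natural transformation $\eta$ is zero on $R_g$ (its image lies in groups with no $2$-torsion), so the long exact sequence \eqref{eq:CR-LES} splits into short exact sequences and $c$ is injective in every degree. Now $A$ and $A'$ have \emph{the same} complexification $C\sp*(\Lambda)$ with the same unit, so $c([1_A])=[s_v]=c([1_{A'}])$ inside $KU_0(A)=K_0(C\sp*(\Lambda))=KU_0(A')$. The complex degree-$0$ part $\Phi\pu_0$ is an automorphism of $K_0(C\sp*(\Lambda))\cong\Z_g$, hence multiplication by some unit $a$ mod $g$. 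Multiplication by an integer $t\equiv a^{-1}\pmod g$ is a $\CR$-module endomorphism of $K\crr(A')$, since the natural transformations $r,c,\eta,\beta,\psi$ are all additive, and it is invertible because every group of $R_g$ is annihilated by $g$ and $\gcd(t,g)=1$; call this automorphism $\Psi$. Replacing $\Phi$ by $\Psi\circ\Phi$, we may assume $\Phi\pu_0=\id$. Then $c(\Phi([1_A]))=\Phi\pu_0(c([1_A]))=[s_v]=c([1_{A'}])$, and injectivity of $c$ forces $\Phi([1_A])=[1_{A'}]$. Hence $(K\crr(A),[1_A])\cong(K\crr(A'),[1_{A'}])$, and Theorem~\ref{CR Theorems}(4) yields $A\cong A'$.

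The genuinely delicate point --- the main obstacle --- is the unit matching: the isomorphism supplied by Proposition~\ref{prop:one-vertex-2-graph} need not a priori respect the class of the unit, so the adjustment above is actually needed, and it is precisely the oddness of $g$ (which kills $\eta$ and hence $\ker c$) that makes the adjustment work. Everything else is routine structure theory of $k$-graph $C\sp*$-algebras together with the additivity of the $\CR$-module operations.
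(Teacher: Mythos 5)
Your overall route is the same as the paper's: apply Proposition~\ref{prop:one-vertex-2-graph} to both involutions to get $K\crr(C\sp*\sr(\Lambda,\gamma))\cong R_g\cong K\crr(C\sp*\sr(\Lambda,\id))$, then invoke the classification of real $C\sp*$-algebras whose complexification is a unital purely infinite simple Kirchberg algebra (Theorem~\ref{CR Theorems}(4)). Your unit-matching step is a genuine refinement rather than a deviation: the paper's proof only says the algebra ``is classified by its $K$-theory'' and does not record why the class of the unit can be arranged to match. Your argument -- that $\eta=0$ on $R_g$ because all groups are odd torsion, hence $c$ is injective in every degree by the exactness of \eqref{eq:CR-LES}, and that the degree-zero complex part of the isomorphism can be corrected by multiplication by a unit mod $g$ (which is a $\CR$-module automorphism of $R_g$) -- is correct and fills in exactly the point the paper leaves implicit.

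The one step that does not stand as written is the pure-infiniteness argument. Showing that $1=s_v$ is properly infinite (correct, via (CK4) with $m\geq 2$) gives that $C\sp*(\Lambda)$ is not AF, but the appeal to ``the AF-or-purely-infinite dichotomy for simple $k$-graph $C\sp*$-algebras'' is not available: this dichotomy is a theorem for $1$-graphs, but for higher-rank graphs it is an open problem in general, and general $C\sp*$-theory does not rescue the step either, since a simple unital nuclear $C\sp*$-algebra with a properly infinite unit need not be purely infinite (R{\o}rdam's examples). The conclusion you want is nevertheless true and is exactly what the paper cites: since $C\sp*(\Lambda)$ is simple and its single vertex supports a loop with an entrance (because $m,n\geq 2$), pure infiniteness follows from \cite[Corollary 5.1]{brown-clark-sierakowski} (or from the loop-with-an-entrance criterion of \cite{kp}). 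Replace the dichotomy appeal with that citation and your proof is complete.
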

 \begin{proof}
Recall from  \cite[Proposition 4.8]{kp} (cf.~also \cite[Lemma 3.2]{robertson-sims}) that the factorization rules which define $\Lambda$ will determine  if $C\sp*(\Lambda)$ is simple. 
 When $C\sp*(\Lambda)$ is simple, \cite[Corollary 5.1]{brown-clark-sierakowski} tells us that since $m, n \geq 2$, 
 $C\sp*(\Lambda)$ is purely infinite.  Consequently by  \cite[Theorem 10.2]{brs}, $C\sp*\sr(\Lambda, \gamma)$ is classified by its $K$-theory.  
 
 If $g$ is odd, then  Proposition \ref{prop:one-vertex-2-graph} tells us that this $K$-theory is independent of the involution $ \gamma$, so 
 \[C\sp*\sr(\Lambda,  \gamma) \cong C\sp*\sr(\Lambda, \gamma_{triv})\cong \mathcal O_m^\R \otimes \mathcal O_n^\R\]
 for any involution $ \gamma$ on $\Lambda$.
 \end{proof}

We now undertake the proof of Proposition \ref{prop:one-vertex-2-graph}.

\begin{proof}[Proof of Proposition \ref{prop:one-vertex-2-graph}]
The incidence matrices are $1 \times 1$ matrices, so $1 - M_1^t = 1-n$ and $1 - M_2^t = 1-m$. As $\Lambda^0 = \{ v\} = G_f$, we have $K\crr(B\sr) = A = K\crr(\R)$. Theorem \ref{k=2--chaincomplex} therefore tells us that the chain complex $\mathcal A^{(0)}$ is
\begin{equation} 
0 \rightarrow K\crr(\R) \xrightarrow{
\left( \begin{smallmatrix} -\rho^2 \\ \rho^1 \end{smallmatrix}  \right)} 
	K\crr(\R)^{2} \xrightarrow{ \left( \begin{smallmatrix} \rho^1 &   \rho^2 \end{smallmatrix} \right) }
	K\crr(\R) \rightarrow 0 \; . 
	\label{eq:one-vx-chain-cx}
	\end{equation}
We now use Table \ref{fig1} to compute the individual maps $\rho^i_j$ in each degree $j$.
\begin{align*}
~& \text{ \underline{complex part:}} \\
\text{degree $0$:} \qquad &0 \rightarrow \Z  \xrightarrow{ \left( \begin{smallmatrix} m-1 \\ 1-n \end{smallmatrix}  \right) }
			 ~\Z^2 \xrightarrow{ \left( \begin{smallmatrix} 1-n &   1-m \end{smallmatrix} \right)  } \Z \rightarrow 0 \\
%\text{degree $1$:} \qquad &0 \rightarrow 0 \rightarrow 0 \rightarrow 0 \rightarrow 0 \\		
~ \\	 
~& \text{ \underline{ real part:}} \\
\text{degree $0$:} \qquad &0 \rightarrow \Z  \xrightarrow{ \left( \begin{smallmatrix} m-1 \\ 1-n \end{smallmatrix}  \right) }
			 ~\Z^2 \xrightarrow{ \left( \begin{smallmatrix} 1-n &   1-m \end{smallmatrix} \right)  } \Z \rightarrow 0 \\
\text{degree $1$:} \qquad &0 \rightarrow \Z_2  \xrightarrow{ \left( \begin{smallmatrix} m-1 \\ 1-n \end{smallmatrix}  \right) }
			 ~ \Z_2^2 \xrightarrow{ \left( \begin{smallmatrix} 1-n &   1-m \end{smallmatrix} \right)  } \Z_2 \rightarrow 0 \\
\text{degree $2$:} \qquad &0 \rightarrow \Z_2  \xrightarrow{ \left( \begin{smallmatrix} m-1 \\ 1-n \end{smallmatrix}  \right) }
			 ~\Z_2^2 \xrightarrow{ \left( \begin{smallmatrix} 1-n &   1-m \end{smallmatrix} \right)  } \Z_2 \rightarrow 0 \\
\text{degree $4$:} \qquad &0 \rightarrow \Z  \xrightarrow{ \left( \begin{smallmatrix} m-1 \\ 1-n \end{smallmatrix}  \right) }
			 ~ \Z^2 \xrightarrow{ \left( \begin{smallmatrix} 1-n  &  1-m \end{smallmatrix} \right)  } \Z \rightarrow 0 \\	
\end{align*}
(For any degree not shown, the sequence consists of all trivial groups.)

The $E^2$ page of the spectral sequence has both a real part and a complex part, denoted $(E^2_{i,j})\po$ and $(E^2_{i,j})\pu$, the groups of which are derived from the chain complex above.
In the case that $g  = \gcd (m-1, n-1)$ is odd, we use the first line of the table above to compute that
\begin{equation*}
 (E^2_{i,j})\pu = H_i( (\mathcal{A}^{(0)})_j \pu) 
   = \begin{cases}
  \Z_g, &  \text{ if $i = 0,1$ and $j$ even }\\
  0, & \text{ otherwise.}
  \end{cases}
  \end{equation*}

The other lines of the table reveal that 
\begin{equation}
\label{eq:hlogy-1-vx} (E^2_{i,j})\po = H_i( (\mathcal{A}^{(0)})_j \po) =  \begin{cases}  \Z_{g}  & \text{ if $i = 0,1$ and $j \equiv 0 \pmod 4$ }\\
				0 & \text{otherwise.} \end{cases} 
				\end{equation}

From this data, we obtain the $E^2$ page of the  spectral sequence which converges to $K\crr(C\sp*\sr(\Lambda, \gamma))$.  The left-hand diagram below is the $E^2$ page for the complex $K$-theory and the right-hand diagram is for the real $K$-theory. Notice that the $j$ index is vertical and the $i$ index is horizontal.
The spectral sequence is 0 in all non-pictured columns, and is periodic with period 8 in the vertical direction.

\begin{gather*}
 \text{ \underline{ $E^2_{p,q}$ when $g$ is odd}} \\
\def\vvline{\hfil\kern\arraycolsep\vline\kern-\arraycolsep\hfilneg}
\begin{array}{ cccc }
 \multicolumn
{4}{c}{ \underline{\text{complex part}}} \\
\vspace{.25cm} \\
~ \vvline& \hspace{.3cm} \vdots \hspace{.3cm} &\hspace{.3cm} \vdots \hspace{.3cm} & \hspace{.3cm} \vdots \hspace{.3cm} \\
7  \vvline & 0 & 0 & 0  \\
6   \vvline  &  \Z_g &  \Z_g  & 0 \\
5   \vvline  &  0    &  0   &  0    \\
4   \vvline  &  \Z_{g} & \Z_{g}  & 0   \\
3    \vvline &  0  & 0 &  0   \\
 2   \vvline  &  \Z_g  &  \Z_g   & 0  \\
 1   \vvline  &  0 &  0  &0  \\
0 \vvline & \Z_g & \Z_g & 0 \\ \hline
~ \vvline & 0 & 1 & 2
\end{array}
\hspace{2cm}
\begin{array}{ cccc }
 \multicolumn
{4}{c}{ \underline{\text{real part}}} \\
\vspace{.25cm} \\
~ \vvline & \hspace{.3cm} \vdots \hspace{.3cm} &\hspace{.3cm} \vdots \hspace{.3cm} & \hspace{.3cm} \vdots \hspace{.3cm} \\
7  \vvline & 0 & 0 & 0  \\
6   \vvline  &  0    &  0   &  0    \\
5   \vvline  &  0    &  0   &  0    \\
4   \vvline  &  \Z_g &  \Z_g  & 0 \\
3    \vvline &  0  & 0 &  0   \\
 2   \vvline  & 0    &  0   &  0    \\
 1   \vvline  &  0 &  0  &0  \\
0 \vvline & \Z_g & \Z_g & 0 \\ \hline
~ \vvline & 0 & 1 & 2
\end{array}
\end{gather*}

The $d^2$ map has degree $(-2, 1)$ and is hence equal to 0 everywhere. It follows that $E^2 = E^\infty$ and that
$KO_q( C\sr^*(\Lambda, \gamma) ) $ has a filtration whose factors are the groups in the rightmost table above whose $i$ and $j$ coordinates sum to $q$.  Since, for each $q$, there is at most one nonzero such group, we conclude that
\[ KO_q(C\sr^*(\Lambda, \gamma)) =  \begin{cases}
\Z_{g} & q \equiv 0, 1, 4, 5 \pmod 8 \\
0 & q\equiv 2,3,6,7 \pmod 8. \end{cases} \]
Similarly, $KU_q(C\sr^*(\Lambda, \gamma)) = \Z_g$ for all $q$.
Therefore $K\crr(C\sr^*(\Lambda;\gamma)) \cong R_g$ if $g$ is odd.

Now consider the case that $g$ is even (with $m, n \geq 3$). The computations for $KU_*(C\sp*\sr(\Lambda, \gamma))$ are the same as in the odd case above.  When computing {$H_i(( \A^{(0)})^O_j)$} % hlogy of chain complex
 for even $g$,  we obtain nearly the same formulas as we found in Equation \eqref{eq:hlogy-1-vx} for  the case that $g$ is odd.  The difference arises from the fact that all of the maps in the real part of the chain complex \eqref{eq:one-vx-chain-cx}
in degrees 1 and 2 are zero if $g$ is even. Hence, 
the $E^2$ page of the spectral sequence which converges to $KO_*(C\sp*\sr(\Lambda, \gamma))$ when $g$ is even is as shown:

\begin{gather*}
 \text{ \underline{ $E^2_{p,q}$ when $g$ is even}} \\
\def\vvline{\hfil\kern\arraycolsep\vline\kern-\arraycolsep\hfilneg}
\begin{array}{ cccc }
 \multicolumn
{4}{c}{ \underline{\text{complex part}}} \\
\vspace{.25cm} \\
~ \vvline& \hspace{.3cm} \vdots \hspace{.3cm} &\hspace{.3cm} \vdots \hspace{.3cm} & \hspace{.3cm} \vdots \hspace{.3cm} \\
7  \vvline & 0 & 0 & 0  \\
6   \vvline  &  \Z_g &  \Z_g  & 0 \\
5   \vvline  &  0    &  0   &  0    \\
4   \vvline  &  \Z_{g} & \Z_{g}  & 0   \\
3    \vvline &  0  & 0 &  0   \\
 2   \vvline  &  \Z_g  &  \Z_g   & 0  \\
 1   \vvline  &  0 &  0  &0  \\
0 \vvline & \Z_g & \Z_g & 0 \\ \hline
~ \vvline & 0 & 1 & 2
\end{array}
\hspace{2cm}
\begin{array}{ cccc }
 \multicolumn
{4}{c}{ \underline{\text{real part}}} \\
\vspace{.25cm} \\
~ \vvline & \hspace{.3cm} \vdots \hspace{.3cm} &\hspace{.3cm} \vdots \hspace{.3cm} & \hspace{.3cm} \vdots \hspace{.3cm} \\
7  \vvline & 0 & 0 & 0  \\
6   \vvline  &  0    &  0   &  0    \\
5   \vvline  &  0    &  0   &  0    \\
4   \vvline  &  \Z_g &  \Z_g  & 0 \\
3    \vvline &  0  & 0 &  0   \\
 2   \vvline  & \Z_2    &  \Z_2^2   &  \Z_2    \\
 1   \vvline  & \Z_2    &  \Z_2^2   &  \Z_2    \\
0 \vvline & \Z_g & \Z_g & 0 \\ \hline
~ \vvline & 0 & 1 & 2
\end{array}
\end{gather*}

The map $d^2$ again is equal to 0 everywhere except possibly the map $(d^2_{(2,1)})^O \colon \Z_2 \rightarrow \Z_2$ from degree $(2,1)$ to degree $(0,2)$ -- this map may or may not be the zero map. Then the $E^3$ page of the spectral sequence must look like one of the following. The left version corresponds to the case $(d^2_{(2,1)})^O = 0$ and the right version corresponds to the case $(d^2_{(2,1)})^O \neq 0$. For $n \geq 3$ we have $d^3 = 0$, thus $E^\infty = E^3$. 

\begin{gather*}
\text{ \underline{ real part of $E^\infty_{p,q} $ }}\\
\def\vvline{\hfil\kern\arraycolsep\vline\kern-\arraycolsep\hfilneg}
\begin{array}{ cccc }
 \multicolumn
{4}{c}{ \underline{ \text{case 1: $d^2_{2,1} = 0$}}} \\
\vspace{.25cm} \\
~ \vvline& \hspace{.3cm} \vdots \hspace{.3cm} &\hspace{.3cm} \vdots \hspace{.3cm} & \hspace{.3cm} \vdots \hspace{.3cm} \\
7  \vvline & 0 & 0 & 0  \\
6   \vvline  &  0 &  0  & 0 \\
5   \vvline  &  0    &  0   &  0    \\
4   \vvline  &  \Z_{g} & \Z_{g}  & 0   \\
3    \vvline &  0  & 0 &  0   \\
 2   \vvline  &  \Z_2  &  \Z_2^2   & \Z_2  \\
 1   \vvline  &   \Z_2  &  \Z_2^2   & \Z_2  \\
0 \vvline & \Z_g & \Z_g & 0 \\ \hline
~ \vvline & 0 & 1 & 2
\end{array}
\hspace{2cm}
\begin{array}{ cccc }
 \multicolumn
{4}{c}{ \underline{ \text{case 2: $d^2_{2,1} \neq 0$}}} \\
\vspace{.25cm} \\
~ \vvline & \hspace{.3cm} \vdots \hspace{.3cm} &\hspace{.3cm} \vdots \hspace{.3cm} & \hspace{.3cm} \vdots \hspace{.3cm} \\
7  \vvline & 0 & 0 & 0  \\
6   \vvline  &  0    &  0   &  0    \\
5   \vvline  &  0    &  0   &  0    \\
4   \vvline  &  \Z_g &  \Z_g  & 0 \\
3    \vvline &  0  & 0 &  0   \\
 2   \vvline  & 0   &  \Z_2^2   &  \Z_2    \\
 1   \vvline  & \Z_2    &  \Z_2^2   &  0    \\
0 \vvline & \Z_g & \Z_g & 0 \\ \hline
~ \vvline & 0 & 1 & 2
\end{array}
\end{gather*}

Once the $E^\infty$ groups are settled, this determines $KO_q(C\sp*\sr(\Lambda, \gamma))$ only ``up to extensions'', meaning that there is a filtration of $KO_q(C\sp*\sr(\Lambda, \gamma))$ in which the successive subquotients are isomorphic to 
$E^\infty_{i,j}$ where $i + j = q$. However,
we can deduce some specific information from the spectral sequence, namely that  $KO_n(C\sr^*(\Lambda,  \gamma)) = 0$ for $n = 6,7$, and that $KO_5(C\sp*\sr(\Lambda, \gamma)) = KO_0(C\sp*\sr(\Lambda, \gamma)) = \Z_g$.

To complete the computation of $K\crr(C\sp*\sr(\Lambda, \gamma))$, we now consider the core. 
We claim that the involution $\psi_*$ induced on $KU_*( C\sp*\sr(\Lambda, \gamma))$ by the real structure of $C\sp*\sr(\Lambda, \gamma)$ satisfies $\psi_j = 1$ for $j = 0,1,4,5$ and $\psi_j = -1$ for $j = 2,3,6,7$. To prove this claim, we first observe that for the $\CR$-module $K\crr(\R)$, we have $\psi_\R = 1$ in degree 0.  In the complex part 
\begin{equation}
0 \rightarrow \Z \xrightarrow{ \left( \begin{smallmatrix} m-1 \\ 1-n \end{smallmatrix}  \right) }
			 ~\Z^2 \xrightarrow{ \left( \begin{smallmatrix} 1-n &   1-m \end{smallmatrix} \right)  }\Z \rightarrow 0
\label{eq:one-vx-complex-hlogy}			 
			 \end{equation}
 of the chain complex $\mathcal A^{(0)}$, each copy of $\Z$ represents $KU_0(\R)$.  Thus,  $\psi_\R$ induces the  identity map on the homology groups $H_0, H_1$ of the chain complex \eqref{eq:one-vx-complex-hlogy}.  As $H_0 = (E^2_{0,0})^U = KU_0(C\sp*\sr(\Lambda, \gamma))$ and $H_1 = (E^2_{1,0})^U = KU_1(C\sp*\sr(\Lambda, \gamma))$, we conclude that $\psi_j = 1$ for $j = 0,1$.  The relation $\psi_{j+2} \beta = - \beta \psi_j$ then implies that, as claimed,
 \[ \psi_j = 1 \text{ for }j = 0,1,4,5\quad \text{  and } \quad \psi_j = -1\text{ for }j = 2,3,6,7.\] 

Recall that  $MU_i = ( \ker (1 -\psi_i) )/( \rm{image} (1 + \psi_i))$.
Since $g$ is even, one computes that $MU_0 = \Z_g/2\Z_g \cong \Z_2$, and $MU_2 = \{ 0, g/2\}/\{ 0\} \cong \Z_2$.  Similar computations reveal that  
$MU_i \cong \Z_2 \text{  for all }i.$
  The fact that $KO_i(C\sp*\sr(\Lambda, \gamma)) = 0$ if $i = 6, 7$ implies that $MO_i :=  \rm{image}~\eta_{i-1} \colon KO_{i-1}(C\sr^*(\Lambda,  \gamma)) \rightarrow KO_{i}(C\sr^*(\Lambda,  \gamma)) $ is zero for $i = 0,6,7$. 
  
 The long exact sequence  \eqref{eq:core-exact-seq} implies that $r_6'$ and $c_1'$ are both isomorphisms (thus $MO_1 = MO_5 = \Z_2$) and that $r'_5: MU_5 \to MO_3$ and $r'_6: MU_6 \to MO_4$ must be injective.
From these observations, we obtain the following two segments of sequence \eqref{eq:core-exact-seq}:
  \begin{align}
  0  & \to \Z_2{ \to} MO_4  \xrightarrow{\eta_4'} \Z_2\xrightarrow{c'_5} \Z_2 \xrightarrow{r'_4} MO_2 \to MO_3 \to \Z_2 \to 0 \label{eq:first}\\
  0 & \to \Z_2 \to MO_3 \to MO_4 \xrightarrow{c'_4} \Z_2 \xrightarrow{r'_3} \Z_2 \xrightarrow{\eta'_1}MO_2 \to \Z_2 \to 0
  \label{eq:second}
  \end{align}
  
  Note first that $\eta_4'$ must either be the zero map or be onto.  In the first case, since $r_6'$ is injective, we have $MO_4 = \Z_2$, and $c_5'$ must also be injective (hence an isomorphism). We therefore have $r'_4 = 0$, so Equation \eqref{eq:first}  becomes 
  \[ 0 \to MO_2 \stackrel{\eta_2'}{\to} MO_3 \stackrel{c'_3}{\to} \Z_2 \to 0.\]
  If $\eta_4'$ is onto, then $c_5'$ must be the zero map.  Consequently, $MO_4 = \Z_2^2$ and  Equation \eqref{eq:first} becomes
  \begin{equation}
  \label{eq:first-revised}
  0 \to \Z_2 \stackrel{r'_4}{\to} MO_2 \to MO_3 \to \Z_2 \to 0.\end{equation}

  Similarly,  $\eta'_1$ must be either injective, or the zero map.  In the first case, the fact that each $MO_i$ group is 2-torsion implies that $MO_2 = \Z_2^2$.  Moreover,  $r'_3$ must be the zero map, so Equation \eqref{eq:second} becomes 
  \begin{equation}
  \label{eq:second-revised} 0 \to \Z_2 \to MO_3 \stackrel{\eta'_3}{\to} MO_4 \stackrel{c'_4}{ \to} \Z_2 \to 0.
  \end{equation}
 If $\eta_1' = 0$ then $MO_2 = \Z_2$,  $r_3' = 1$, and $c'_4 =0$, so Equation \eqref{eq:second} becomes 
  \[ 0 \to \Z_2 \to MO_3 {\to} MO_4 \to 0.\]

  Thus, if $\eta_1' = 0$ and $\eta_4' = 0$, the fact that each $MO_i$ group is 2-torsion implies that   $MO_3 = \Z_2^2$.  If $\eta_1' = 0$ and $\eta_4'$ is onto, Equation \eqref{eq:first} becomes 
  \[ 0 \to \Z_2 \to \Z_2 \to MO_3 \to \Z_2 \to 0,\]
  which forces $MO_3 = \Z_2$.  However, Equation \eqref{eq:second} then implies that $MO_4 = 0$, contradicting the fact that (as we observed above) in this case we have $MO_4 = \Z_2^2$. %$\eta_4': MO_4 \to MO_5$ is onto.
  
  If $\eta_1'$ is injective and $\eta_4'=0$, so that $MO_4 = \Z_2$ and $c'_4 = 1$, we must have $\eta_3' = 0$ and hence $MO_3 = \Z_2$.  In other words, $c'_3 = 1$ and $\eta_2' = 0$.  This  forces $MO_2 = 0$, which contradicts the fact that if $\eta_1'$ is injective we have $MO_2 = \Z_2^2$. %injectivity of $\eta_1': \Z_2\to MO_2$.
  
  Finally, suppose $\eta_1'$ is injective and $\eta_4'$ is onto, so that $MO_4 = \Z_2^2 = MO_2$. We conclude from Equations \eqref{eq:first-revised} and \eqref{eq:second-revised} that $MO_3 = \Z_2^2$.
  
  In other words, the $MO_i$ groups are given by the first of the tables below if $\eta_4' = \eta_1' = 0$ and by the second if $\eta_4'$ is onto and $\eta_1'$ is injective; no other options are possible.
\[ \begin{array}{|c|c|c|c|c|c|c|c|c|c|}  
\hline  \hline 
 & \makebox[1cm][c]{0} & \makebox[1cm][c]{1} & 
\makebox[1cm][c]{2} & \makebox[1cm][c]{3} 
& \makebox[1cm][c]{4} & \makebox[1cm][c]{5} 
& \makebox[1cm][c]{6} & \makebox[1cm][c]{7} \\
\hline  \hline
MO_i
& 0 & \Z_2 & \Z_2 & \Z_2^2 & \Z_2 & \Z_2 & 0 &  0  \\
\hline  
\hline
\end{array} \]

\[\begin{array}{|c|c|c|c|c|c|c|c|c|c|}  
\hline  \hline 
 & \makebox[1cm][c]{0} & \makebox[1cm][c]{1} & 
\makebox[1cm][c]{2} & \makebox[1cm][c]{3} 
& \makebox[1cm][c]{4} & \makebox[1cm][c]{5} 
& \makebox[1cm][c]{6} & \makebox[1cm][c]{7} \\
\hline  \hline
MO_i
& 0 & \Z_2 & \Z_2^2 & \Z_2^2 & \Z_2^2 & \Z_2 & 0 &  0  \\
\hline  
\hline
\end{array} \]

As we noted at the beginning of this section, the core of the $\CR$-module $S_g$ coincides with the first table above, and the core of the $\CR$-module $T_g$ coincides with the second. Therefore, by \cite[Theorem~4.2.1]{hewitt},
$K\crr(C\sp*(\Lambda,  \gamma))$ is either isomorphic to $S_g$ or to $T_g$.

Comparing the cardinality of $(S_g)^O_2$, $(T_g)^O_2$, and the two options 
for the  $E^\infty$ page of the spectral sequence converging to $KO_*(C\sp*_\R(\Lambda; \gamma))$, we see that 
when $d^2_{(2,1)} \neq 0$ we have
$K\crr(C\sp*(\Lambda,  \gamma)) \cong S_g$ 
and when $d^2_{(2,1)} = 0$ we have
$K\crr(C\sp*(\Lambda,  \gamma)) \cong T_g$. 
\end{proof}

\subsection{A 3-vertex rank-2 graph} \label{SectionExample2}
In this section, we consider a family of rank-2 graph $\Lambda$ with three vertices and with the following adjacency matrices  
\[M_1 = M_2 = \begin{pmatrix} 1 & 1 & 1 \\ 1 & 0 & n-1 \\ 1 & n-1 & 0 \end{pmatrix} \;  \] for $n \geq 2$.
We also consider an involution $ \gamma$ that swaps the second and third vertices. 
(By comparison, a rank-1 graph with involution and with the same adjacency matrix was considered in Example 6.2 in \cite{boersema-MJM}.)
We do not specify the factorization rules for $\Lambda$, since they do not affect our $K$-theory calculations. They may be any factorization rules that are consistent with the involution $ \gamma$.
We consider the real $C\sp*$-algebra $C\sr^*(\Lambda,  \gamma)$.

\begin{prop} The $\CR$ $K$-theory $K\crr(C\sr^*(\Lambda,  \gamma))$ is isomorphic to one of two $\CR$-modules, $P_{2n}$ or $Q_{2n}$.
\end{prop}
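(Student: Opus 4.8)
The plan is to follow the strategy of Proposition~\ref{prop:one-vertex-2-graph}: identify the chain complex $\mathcal A^{(0)}$ of Theorem~\ref{k=2--chaincomplex}, compute its homology to obtain the $E^2$ page, pin down the differential $d^2$, and then use the core of a $\CR$-module (Section~\ref{sec:examples}) to determine $K\crr(C\sr^*(\Lambda,\gamma))$ up to two possibilities. First I would set up the complex. Since $\gamma$ fixes the first vertex $v_1$ and interchanges $v_2$ and $v_3$, we take $G_f=\{v_1\}$, $G_1=\{v_2\}$, $G_2=\{v_3\}$, so that $A=K\crr(\R)\oplus K\crr(\C)$. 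Because $M_1=M_2$, Definition~\ref{def:phi-i} gives $\rho^1=\rho^2=:\rho$, and $\mathcal A^{(0)}$ reduces to $0\to A\to A^2\to A\to 0$ with $\partial_1=\smh{\rho}{\rho}$ and $\partial_2=\smv{-\rho}{\rho}$. Using the $\CR$-module isomorphism $A^2\to A^2$, $(x,y)\mapsto(x+y,y)$, one reads off in each degree $H_0\cong\coker\rho$, $H_1\cong\coker\rho\oplus\ker\rho$, and $H_2\cong\ker\rho$, so that $E^2_{p,q}\cong H_p(\mathcal A^{(0)})_q$ is built entirely from the kernels and cokernels of $\rho$.

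Next I would compute those kernels and cokernels degree by degree. The complex part $\rho\pu_0=I-M_1^t$ on $\Z^{\Lambda^0}$ has Smith normal form $\mathrm{diag}(1,1,2n)$, so $\coker\rho\pu_q\cong\Z_{2n}$ for $q$ even (the odd complex groups vanish) while $\ker\rho\pu_q=0$ for all $q$. For the real part I would feed the entries $B_{11}=0$, $B_{12}=B_{21}=-1$, $B_{22}=1$, $B_{23}=1-n$ of $B_1=I-M_1^t$ into the table of Figure~\ref{fig1}; Smith normal form of the resulting small integer matrices yields $\coker\rho\po_q=\Z_2,\Z_2,\Z_{2n},0,\Z_2,0,\Z_n,0$ and $\ker\rho\po_q=0,\Z_2,\Z_2,0,0,0,0,0$ in degrees $q=0,\dots,7$. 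Substituting these into $H_0,H_1,H_2$ gives the full $E^2$ page, which is supported in columns $p=0,1,2$ and is periodic of period $8$ in $q$ on its real part and of period $2$ on its complex part.

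Since $d^2$ has bidegree $(-2,1)$ and $E^2$ vanishes outside columns $0,1,2$, the only potentially nonzero differential is $d^2\colon E^2_{2,1}\to E^2_{0,2}$, which is a homomorphism $\Z_2\to\Z_{2n}$ in the real part: it is either zero or sends the generator to $n$. As $k=2$, $E^3=E^\infty$, so exactly two candidate $E^\infty$ pages arise, differing only in $E^\infty_{0,2}$ (equal to $\Z_{2n}$ or to $\Z_n$) and $E^\infty_{2,1}$ ($\Z_2$ or $0$); these two pages will correspond to the $\CR$-modules $P_{2n}$ and $Q_{2n}$. The complex part of $E^\infty$ has at most one nonzero entry in each total degree, so $KU_q(C\sr^*(\Lambda,\gamma))\cong\Z_{2n}$ for every $q$. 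The one genuinely new ingredient compared with Section~\ref{SectionExample1} is the conjugation $\psi$: on the copies of $KU_*(\C)$ occurring in $\mathcal A^{(0)}$ it is the coordinate swap, so the induced involution on $KU_0(C\sr^*(\Lambda,\gamma))\cong\coker(I-M_1^t)$ is the automorphism coming from the transposition of $v_2$ and $v_3$. As $I-M_1^t$ commutes with that transposition it descends to $\Z_{2n}$, and a short computation with the cokernel presentation shows that a generator is anti-invariant; hence $\psi_0=-1$, and the relation $\psi_{q+2}\beta=-\beta\psi_q$ then forces $\psi_q=-1$ for $q\equiv0,1$ and $\psi_q=+1$ for $q\equiv2,3\pmod 4$. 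Consequently $MU_q\cong\Z_2$ for all $q$.

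To finish, I would pass to the core of $K\crr(C\sr^*(\Lambda,\gamma))$. From the spectral sequence one extracts the $KO$-groups that carry no extension ambiguity ($KO_5\cong\Z_2$, $KO_6\cong KO_7\cong\Z_n$, $KO_0\cong\Z_2$) together with the filtration subquotients of $KO_1,\dots,KO_4$, and feeds these, along with $MU_*\cong\Z_2$, into the core long exact sequence~\eqref{eq:core-exact-seq}; as in the one-vertex case, the requirement that each $MO_q$ be $2$-torsion should leave exactly two self-consistent solutions for the groups $MO_q$ and the maps $\eta',c',r'$, one tied to each value of $d^2$. By \cite[Theorem~4.2.1]{hewitt}, matching cores then identifies $K\crr(C\sr^*(\Lambda,\gamma))$ with $P_{2n}$ or with $Q_{2n}$. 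I expect the main obstacle to be precisely this last stage: the spectral sequence pins down $KO_*$ only up to extensions (the filtrations of $KO_1,KO_2,KO_3,KO_4$ each admit more than one possible total group), and resolving them while keeping track of which resolution is compatible with each value of $d^2$ requires a careful traversal of the core exact sequence --- in effect the same bookkeeping that produces the explicit tables defining $P_{2n}$ and $Q_{2n}$.
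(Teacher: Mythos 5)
Your setup and $E^2$ computation are sound and match the paper's: the splitting trick $(x,y)\mapsto(x+y,y)$, exploiting $\rho^1=\rho^2$, is a tidy way to reduce everything to $\ker\rho$ and $\coker\rho$, and the kernels and cokernels you list in each degree reproduce exactly the $E^2$ page obtained in the paper by direct homology computations, as does your identification of the single ambiguous differential $d^2_{(2,1)}\colon \Z_2\to\Z_{2n}$ and of $KU_*\cong\Z_{2n}$ with $\psi_0=-1$. (One small imprecision there: $\psi_{q+2}\beta=-\beta\psi_q$ only propagates $\psi$ within a parity class, so you still need a separate generator computation on $KU_1\cong\ker(B\ B)/\im\smv{-B}{B}$ to get $\psi_1=-1$; this matters because $MU_{\mathrm{odd}}\cong\Z_2$ is not forced by $\psi_1^2=1$ alone on $\Z_{2n}$.)

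The genuine gap is in the final core argument. You propose to feed $MU_*\cong\Z_2$ and the unambiguous $KO$-groups into the core exact sequence and conclude, ``as in the one-vertex case,'' that exactly two self-consistent solutions remain. But in the one-vertex example the decisive input was $MO_0=MO_6=MO_7=0$, which there came for free from $KO_6=KO_7=0$. Here $KO_6\cong KO_7\cong\Z_n$, and when $n$ is even these groups have $2$-torsion, so the vanishing of $MO_0$, $MO_6$, $MO_7$ (equivalently $\eta_5=\eta_6=\eta_7=0$) is not automatic; without it the bookkeeping in the sequence \eqref{eq:core-exact-seq} with $MU_*\cong\Z_2$ does not obviously collapse to two cases. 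The paper closes this by explicitly tracing generators through the chain complex to show that $r_5$, $r_6$, $r_7$ are surjective (using the formulas for $(r_\C)_*$ on the $K\crr(\C)$ summand of $A$), whence $\eta_i=0$ for $i=5,6,7$ by the relation $\eta r=0$, and only then runs the two-case analysis from Section~\ref{SectionExample1}. Your proposal omits this computation entirely, and it is precisely the step that cannot be replaced by order-counting or torsion considerations when $n$ is even; supplying it (or an equivalent determination of $\eta_5,\eta_6,\eta_7$) is needed before the core comparison with $P_{2n}$ and $Q_{2n}$ is legitimate.
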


The groups of the $\CR$-modules $P_{2n}$ and $Q_{2n}$ are given by the following tables. Again, we only record the groups, not the natural transformations, as these are completely determined by the given groups.  The structure of $Q_{2n}$ differs somewhat depending on $n$ being even or odd.

\[ \begin{array}{|c|c|c|c|c|c|c|c|c|c|}  
\hline  \hline 
& \makebox[1cm][c]{0} & \makebox[1cm][c]{1} & 
\makebox[1cm][c]{2} & \makebox[1cm][c]{3} 
& \makebox[1cm][c]{4} & \makebox[1cm][c]{5} 
& \makebox[1cm][c]{6} & \makebox[1cm][c]{7} \\
\hline  \hline
(P_{2n})_i\po
& \Z_2 & \Z_2^2 & \Z_{4n} \oplus \Z_2 & \Z_{4n} \oplus \Z_2 & \Z_2^2  & \Z_2 & \Z_n & \Z_n  \\
\hline  
(P_{2n})_i\pu
& \Z_{2n} & \Z_{2n} & \Z_{2n} & \Z_{2n} & \Z_{2n} & \Z_{2n} & \Z_{2n} & \Z_{2n}  \\
\hline  
\hline
\end{array} \]

\[ \begin{array}{|c|c|c|c|c|c|c|c|c|}  
\hline  \hline 
\text{($n$ even)} & \makebox[1cm][c]{0} & \makebox[1cm][c]{1} 
&\makebox[1cm][c]{2} & \makebox[1cm][c]{3} 
& \makebox[1cm][c]{4} & \makebox[1cm][c]{5} 
& \makebox[1cm][c]{6} & \makebox[1cm][c]{7} \\
\hline  \hline
 (Q_{2n})_i\po
& \Z_2 & \Z_2^2 & \Z_2 \oplus \Z_{2n} & \Z_2 \oplus \Z_{2n} & \Z_2^2  & \Z_2 & \Z_n & \Z_n  \\
\hline  
 (Q_{2n})_i\pu
& \Z_{2n} & \Z_{2n} & \Z_{2n} & \Z_{2n} & \Z_{2n} & \Z_{2n} & \Z_{2n} & \Z_{2n}  \\
\hline  
\hline
\end{array} \]

\[ \begin{array}{|c|c|c|c|c|c|c|c|c|c|}  
\hline  \hline 
\text{($n$ odd)} & \makebox[1cm][c]{0} & \makebox[1cm][c]{1} & 
\makebox[1cm][c]{2} & \makebox[1cm][c]{3} 
& \makebox[1cm][c]{4} & \makebox[1cm][c]{5} 
& \makebox[1cm][c]{6} & \makebox[1cm][c]{7} \\
\hline  \hline
 (Q_{2n})_i\po
& \Z_2 & \Z_4 & \Z_2^2 \oplus \Z_n & \Z_2^2 \oplus \Z_{n} & \Z_4  & \Z_2 & \Z_n & \Z_n  \\
\hline  
 (Q_{2n})_i\pu
& \Z_{2n} & \Z_{2n} & \Z_{2n} & \Z_{2n} & \Z_{2n} & \Z_{2n} & \Z_{2n} & \Z_{2n}  \\
\hline  
\hline
\end{array} \]

The cores of these $\CR$-modules include the groups below: 

\[ \begin{array}{|c|c|c|c|c|c|c|c|c|c|}  
\hline  \hline 
\text{Core of~} P_{2n}   & \makebox[1cm][c]{0} & \makebox[1cm][c]{1} & 
\makebox[1cm][c]{2} & \makebox[1cm][c]{3} 
& \makebox[1cm][c]{4} & \makebox[1cm][c]{5} 
& \makebox[1cm][c]{6} & \makebox[1cm][c]{7} \\
\hline  \hline
MO_i
& 0 & \Z_2 & \Z_2^2 & \Z_2^2 & \Z_2^2 & \Z_2 & 0 &  0  \\
MU_i 
& \Z_2 & \Z_2 & \Z_2 & \Z_2 & \Z_2 & \Z_2 & \Z_2 & \Z_2  \\
\hline  
\hline
\end{array} \]

\[ \begin{array}{|c|c|c|c|c|c|c|c|c|c|}  
\hline  \hline 
\text{Core of~} Q_{2n}     & \makebox[1cm][c]{0} & \makebox[1cm][c]{1} & 
\makebox[1cm][c]{2} & \makebox[1cm][c]{3} 
& \makebox[1cm][c]{4} & \makebox[1cm][c]{5} 
& \makebox[1cm][c]{6} & \makebox[1cm][c]{7} \\
\hline  \hline
MO_i
& 0 & \Z_2 & \Z_2 & \Z_2^2 & \Z_2 & \Z_2 & 0 &  0  \\
MU_i 
& \Z_2 & \Z_2 & \Z_2 & \Z_2 & \Z_2 & \Z_2 & \Z_2 & \Z_2  \\
\hline  
\hline
\end{array} \]

We note that there is a $\CR$-module isomoprhism 
\[  P_{2n} \cong \Sigma^{-2} K\crr( \mathcal{E}_{2n+1}) \oplus \Sigma^{-3} K\crr( \mathcal{E}_{2n+1}) \; , \] 
where $\mathcal{E}_{2n+1}$  is the exotic Cuntz algebra described in Section~11 of \cite{brs}.
{However, to our knowledge, the $\CR$-modules $Q_{2n}$ have not previously been discussed in the literature.}

\begin{proof}

We will develop the chain complex, and subsequent spectral sequence, as in Theorem~\ref{k=2--chaincomplex} to compute 
$K\crr(C\sr^*(\Lambda,  \gamma))$.
The chain complex is
\[ 0 \rightarrow A \xrightarrow{\partial_2} A^2 \xrightarrow{\partial_1} A \rightarrow 0\]
where $A = K\crr(\R) \oplus K\crr(\C)$.
Using Theorem \ref{k=2--chaincomplex}
and the fact that for $i = 1,2$ we have 
\[ \rho^i = B =  I_3 - M_i  = \begin{pmatrix} 0 & -1 & -1 \\ -1 & 1 &1- n \\ -1 & 1-n & 1 \end{pmatrix}  \; ,\]
we can analyze the groups and maps of this chain complex in each grading, complex and real parts, as below.

\begin{align*}
~& \text{ \underline{complex part:}} \\
\text{ degree $0$} : \quad & 0 \rightarrow \Z^3 
	\xrightarrow{\begin{pmatrix}  -B \\ B \end{pmatrix} }   \Z^6 
	\xrightarrow{\begin{pmatrix} B & B \end{pmatrix}}  \Z^3 \rightarrow 0 \\
%0 : \quad & 0 \rightarrow 0  \rightarrow   0 \rightarrow   0 \rightarrow   0  \\ 
~\\
%%%%
~& \text{ \underline{ real part:}} \\
\text{ degree $0$} : \quad & 0 \rightarrow \Z^2 
	\xrightarrow{\begin{pmatrix} 0 & 2 \\ 1 & n-2 \\ 0 & -2 \\ -1 & 2-n \end{pmatrix}}   \Z^4 
	\xrightarrow{\begin{pmatrix} 0 & -2 & 0 & -2 \\ -1 & 2-n & -1 & 2-n \end{pmatrix}}  \Z^2 \rightarrow 0 \\
%%%%
\text{ degree $1$}: \quad & 0 \rightarrow \Z_2 \xrightarrow{\begin{pmatrix} 0 \\ 0 \end{pmatrix}} 
	\Z_2^2 \xrightarrow{\begin{pmatrix} 0 & 0 \end{pmatrix}} \Z_2 \rightarrow 0
	 \\ % corresponds to fixed vertices
%%%%%%
\text{ degree $2:$} \quad & 0 \rightarrow 
	( \Z_2 \oplus \Z) 
	\xrightarrow{\begin{pmatrix} 0 & 1 \\ 0 & -n \\ 0 & -1 \\ 0 & n \end{pmatrix}}
	(\Z_2 \oplus \Z)^2 
	\xrightarrow{\begin{pmatrix} 0 & -1 &0 & -1 \\ 0 & n & 0 & n  \end{pmatrix}}
	(\Z_2 \oplus \Z)
	\rightarrow 0 \\
%%%%%%
%3 : \quad & 0 \rightarrow 0  \rightarrow   0 \rightarrow   0 \rightarrow   0 \\
%%%%%
\text{ degree $4$} : \quad & 0 \rightarrow \Z^2 
	\xrightarrow{\begin{pmatrix} 0 & 1 \\ 2 & n-2 \\ 0 & -1 \\ -2 & 2-n \end{pmatrix}}   \Z^4 
	\xrightarrow{\begin{pmatrix} 0 & -1 & 0 & -1 \\ -2 & 2-n & -2 & 2-n \end{pmatrix}}  \Z^2 \rightarrow 0 \\
%%%%%%
%5 : \quad & 0 \rightarrow 0  \rightarrow   0 \rightarrow   0  \rightarrow   0 \\
%%%%%
\text{ degree $6$}: \quad  & 0 \rightarrow \Z \xrightarrow{ \begin{pmatrix} -n \\ n \end{pmatrix} } \Z^2 \xrightarrow{ \begin{pmatrix} n & n \end{pmatrix} }  \Z \rightarrow 0 \\
%%%%%%
\end{align*} 

The Smith normal form of $B$ is 
\[\snf(B) %= \snf(B_2) 
= \begin{pmatrix} 1 & 0 & 0 \\ 0 & 1 &0 \\ 0 & 0 & 2n \end{pmatrix}  \; . \]
From this, it easily follows that 
\begin{align*}
(E_{0,0}^2)\pu &= \text{coker}\, (B \, B) \cong \Z_{2n} \, , \\
(E_{1,0}^2)\pu &= \ker (B \, B) /\text{image}\, \begin{pmatrix}
-B \\ B 
\end{pmatrix} \cong \Z_{2n} \, , \\
(E_{2,0}^2)\pu &= \ker  \begin{pmatrix}
-B \\ B \end{pmatrix} = 0
\end{align*}

For the real part, we work out the homology of the exact sequences associated to the ``real part'' above to obtain $(E^2_{p,q})\po$ (or simply $E^2_{p,q}$, as we will denote it when it is clear). We will walk through the details of this for the first three rows and leave the rest to the reader. The Smith normal form of the matrix $ \left( \begin{smallmatrix} 0 & 2 \\ 1 & n-2 \end{smallmatrix} \right)$ is $ \left( \begin{smallmatrix} 1 & 0 \\ 0 & 2 \end{smallmatrix} \right)$ for all $n$. It follows that in the real part in degree 0 (that is, when $q = 0$) we have $E^2_{0,0} = E^2_{1,0} = \Z_2$, and $E^2_{2,0} = 0$. 

When $q = 1$ we have $\partial_1 = \partial_2 = 0$, so it immediately follows that $E_{0,1}^2 = \Z_2, E_{1,1}^2 = \Z_2^2,$ and $E_{1,2}^2 = \Z_2$.

For the next row (when $q = 2$), we first observe that 
\[\text{image}\, \begin{pmatrix} 0 & -1 &0 & -1 \\ 0 & n & 0 & n  \end{pmatrix} = \{ ( [1], (2k+1) n) : k \in \Z\} \cup \{ ([0], 2kn) : k\in \Z)\} \subseteq \Z_2 \oplus \Z.\]
In particular, the sum $([1], 1)$ of the two generators $([1], 0)$ and $([0], 1)$ of $\Z_2 \oplus \Z$ lies in $\im \begin{pmatrix} 0 & -1 &0 & -1 \\ 0 & n & 0 & n  \end{pmatrix}$.
Consequently,  
\[ E_{0,2}^2 = \coker \begin{pmatrix} 0 & -1 &0 & -1 \\ 0 & n & 0 & n  \end{pmatrix} = \langle [([0], 1)] \rangle = \Z_{2n}.\]
  
To show that $E^2_{1,2} = \Z_2 \oplus \Z_{2n}$, we note that
\[ \ker \partial_1 = \ker \begin{pmatrix} 0 & -1 & 0 & -1 \\ 0 & n & 0 & n \end{pmatrix}
= \{ ( x, y, z, -y) \mid x,z \in \Z_2, y \in \Z \} \]
while
\[ \im \partial_2
	= \{ ([x], -nx, [x], nx) \mid x \in \Z \}  \; .\]
{Consequently, $E^2_{1,2} = \ker \partial_1/ \im \partial_2 = \langle [(0,0,1,0)], [(0,1,0,-1)]\rangle \cong \Z_2 \oplus \Z_{2n}$ because $(0, n, 0, -n) \not\in \im \partial_2$ but $(0,2n, 0, -2n)$ is.}

Finally, note that 
\[ E^2_{0,2} = \ker \partial_2 = \ker \begin{pmatrix} 0 & 1 \\ 0 & -n \\ 0 & 1 \\ 0 & n \end{pmatrix}
	= \{ (x, 0) \mid x \in \Z_2 \} = \Z_2 \, .\]

The $E^2_{p,q}$ groups of the spectral sequence  converging to $KO_*(C\sp*\sr(\Lambda, \gamma))$ are shown on the left below. 

From this and similar calculations for $3 \leq q \leq 7$ we obtain the $E^2$ page of the spectral sequence as shown.

\begin{gather*}
\text{ \underline{ $E^2_{p,q} $ }}\\
\def\vvline{\hfil\kern\arraycolsep\vline\kern-\arraycolsep\hfilneg}
\begin{array}{ cccc }
 \multicolumn
{4}{c}{ \underline{ \text{real part}}} \\
\vspace{.25cm} \\
~ \vvline& \hspace{.3cm} \vdots \hspace{.3cm} &\hspace{.3cm} \vdots \hspace{.3cm} & \hspace{.3cm} \vdots \hspace{.3cm} \\
7  \vvline & 0 & 0 & 0  \\
6   \vvline  &  \Z_n &  \Z_n  & 0 \\
5   \vvline  &  0    &  0   &  0    \\
4   \vvline  &  \Z_{2} & \Z_{2}  & 0   \\
3    \vvline &  0  & 0 &  0   \\
 2   \vvline  &  \Z_{2n}  &  \Z_2 \oplus \Z_{2n}   & \Z_2  \\
 1   \vvline  &   \Z_2  &  \Z_2^2   & \Z_2  \\
0 \vvline & \Z_2 & \Z_2 & 0 \\ \hline
~ \vvline & 0 & 1 & 2
\end{array}
\hspace{2cm}
\begin{array}{ cccc }
 \multicolumn
{4}{c}{ \underline{ \text{complex part}}} \\
\vspace{.25cm} \\
~ \vvline & \hspace{.3cm} \vdots \hspace{.3cm} &\hspace{.3cm} \vdots \hspace{.3cm} & \hspace{.3cm} \vdots \hspace{.3cm} \\
7  \vvline & 0 & 0 & 0  \\
6   \vvline  &  \Z_{2n} &  \Z_{2n}  & 0 \\
5   \vvline  &  0    &  0   &  0    \\
4   \vvline  &  \Z_{2n} &  \Z_{2n}  & 0 \\
3    \vvline &  0  & 0 &  0   \\
2   \vvline  &  \Z_{2n} &  \Z_{2n}  & 0 \\
 1   \vvline  & 0    &  0   &  0    \\
0   \vvline  &  \Z_{2n} &  \Z_{2n}  & 0 \\ \hline
~ \vvline & 0 & 1 & 2
\end{array}
\end{gather*}

The map $d^2$ is forced to be 0 everywhere except possibly the map $d^2_{(2,1)} \colon \Z_2 \rightarrow \Z_{2n}$ from degree $(2,1)$ to degree $(0,2)$ in the real case.

The complex spectral sequence (on the right) tells us $KU_i(C\sr^* (\Lambda,  \gamma)) \cong \Z_{2n}$ for all $i$.
Thus $KU_0(C\sp*\sr(\Lambda,\gamma)) = KU_1(C\sp*\sr(\Lambda, \gamma)) \cong \Z_{2n}$. 
It follows that the complex $C\sp*$-algebra $C\sp*(\Lambda)$ is $KK$-equivalent to 
$\mathcal{O}_{2n+1} \otimes \mathcal{O}_{2n+1}$.
 
With more work, we can identify the maps $\psi$, by tracing the elements $KU_*(C\sr^* (\Lambda,  \gamma))$ as they arise from the chain complex through the spectral sequence. In the $i = 0$ case we have that $KU_0(C\sr^* (\Lambda,  \gamma))$ is isomorphic to $A\pu_0/\im B = \Z^3/\im B$, and the generator of $KU_0(C\sr^* (\Lambda,  \gamma)) \cong \Z_{2n}$ is represented by the element $(0, 1, 0)$ which is equivalent to the element $(0,0,-1)$ (since $(0, -1, -1)$ is in the image of $B$). 
Similarly, one computes that 
\[\ker ( B \ B) = \{ (x, y, z, -x, -y, -z) \in \Z^6\},\]
 so $[(0,1,0,0,-1,0)] = [(0,0,-1, 0, 0, 1)]$ generates $KU_1(C\sp*\sr(\Lambda, \gamma)) = \ker (B \ B)/\text{image}\begin{pmatrix}
-B \\ B
\end{pmatrix}$.

As $A_0\pu = KU_0(\R) \oplus KU_0(\C)$, the fact that $(\psi_\C)_0 = \begin{pmatrix}
0& 1 \\ 1 & 0 \end{pmatrix}$
on $KU_0(\C) = \Z^2$ implies that
$(\psi_A)_0(x,y,z) = (x,z,y)$. Thus, the involution $\psi_0$ on $KU_0(C\sr^* (\Lambda,  \gamma)) \cong A\pu_0/\im B \cong \Z^3/\im B$ induced by $\psi_A$ 
satisfies $\psi_0([0,1,0]) = [0, 0, 1].$ 
It follows that $\psi_0$ is given by multiplication by $-1$ in $KU_0(C\sr^*(\Lambda,  \gamma)) = \Z_{2n}$. {A similar analysis} also shows that $\psi = -1$  in $KU_1( C\sr^*(\Lambda,  \gamma)) = \Z_{2n}$. Using the fact that $\psi$ anticommutes with the Bott isomorphism, that is 
$\psi \beta = - \beta \psi$, we find that $\psi_i = -1$ for $i = 0,1,4,5$ and $\psi_i = 1$ for $i = 2,3,6,7$.

In addition to $\psi$, it would be possible to compute the action of most of the natural transformations $r, c, \eta$ in this way, based on the corresponding actions in $A$. Alternatively, once we have computed a few of these natural transformations,  
we can complete the calculation of $K\crr(C\sp*\sr(\Lambda, \gamma))$ using the long exact sequence \eqref{eq:CR-LES}  and the core exact sequence \eqref{eq:core-exact-seq}.

 From the $E^2$ page of the spectral sequence for $KO_*(C\sp*\sr(\Lambda, \gamma))$ we  see immediately that  $KO_0(C\sr^*(\Lambda,  \gamma)) \cong \Z_2$ and $KO_1(C\sr^*(\Lambda,  \gamma))$ is either isomorphic to $\Z_4$ or to $\Z_2^2$. 
Less immediately, we  also find that $\eta_0$ and $\eta_1$ are non-trivial.

To see that $\eta_0$ is non-trivial, observe that  
\[ KO_0(C\sr^*(\Lambda,  \gamma)) \cong (KO_0(\R) \oplus KO_0(\C) )/\im \begin{pmatrix}
0 & -2 & 0 & -2 \\ -1 & 2-n & -1 & 2-n
\end{pmatrix} \] 
is generated by $[(1,0)]$. 
As $(\eta_\R)_0 ([1]) \in KO_1(\R)$ is the non-trivial element of $(A_1)\po = \Z_2$, which is the $E^2_{0,1}$ group of the spectral sequence converging to $KO_*(C\sp*\sr(\Lambda, \gamma))$, we conclude that $\eta_0([1,0])$ is a non-trivial element of $KO_1( C\sr^*(\Lambda,  \gamma))$. Therefore $\eta_0 \neq 0$. In addition, since $(\eta_\R)_1: \Z_2 \to \Z_2$ 
is non-trivial and the image in $E^2_{2,0} = \coker \begin{pmatrix}
0 & -1 & 0 & -1 \\ 0 & n & 0 & n 
\end{pmatrix} $ of the generator $([1], 0)$ of $\Z_2 \subseteq \Z_2 \oplus \Z$ is nontrivial, we conclude that $\eta_1: KO_1(C\sr^*(\Lambda,  \gamma)) \to KO_2(C\sr^*(\Lambda, \gamma))$  is also nontrivial.

Now we claim that $r_i \colon KU_i(C\sr^*(\Lambda,  \gamma)) \rightarrow KO_i(C\sr^*(\Lambda,  \gamma))$ is surjective for $i = 5,6,7$.  
In degree 6, since $A = K\crr(\R) \oplus K\crr(\C)$ and $(r_\C)_6 = \begin{pmatrix}
-1 & 1
\end{pmatrix}$, we conclude that $(r_A)_6 \colon \Z^3 \rightarrow \Z$ on $KO_*(A)$ is given by $(x,y,z) \mapsto z-y$. Now, recall that the generator of $KU_6(C\sr^*(\Lambda,  \gamma)) = \coker\begin{pmatrix}
B & B
\end{pmatrix}= \Z_{2n}$ is represented by $(0,1,0)$.  Thus, $r_6: KU_6(C\sr^*(\Lambda,  \gamma)) \to KO_6(C\sr^*(\Lambda, \gamma))$ satisfies $r_6([0,1,0]) = [-1] \in \Z_n \cong KO_6(C\sp*\sr(\Lambda, \gamma))$.    Thus $r_6$ is onto.

To see that $r_5$ is onto, recall that for $i$ odd
\[KU_i(C\sp*\sr(\Lambda,\gamma)) = \ker (B \ B ) /\text{image}\begin{pmatrix}
-B \\ B
\end{pmatrix} = \Z_{2n} \]
is generated by $g = [(0,1,0,0,-1,0)]$.
Also, for $i = 5$ we find (referring to the degree 4 part of the chain complex)
\[KO_5(C\sp*\sr(\Lambda,\gamma)) = \ker (\partial_1)_4 / \im (\partial_2)_4 = \Z_2 \]
and the non-trivial element can be determined to be represented by $h = [(0, 1, 0, -1)]$.
The map $r_5 \colon KU_5(C\sp*\sr(\Lambda,\gamma)) \rightarrow KO_5(C\sp*\sr(\Lambda,\gamma))$ is then induced by the map
$(r_A)_4: \Z^6 \to \Z^4,$ which is given by the formula
\[ (x,y,z,u,v,w) \mapsto (x, y+z, u, v+w) \; .\]
since $(r\sc)_4 = \begin{pmatrix}
1 & 1
\end{pmatrix}: \Z^2 \to \Z$.
Thus $r_5(g) = [(0, 1, 0, -1)]$, so $r_5$ is surjective.

Following a similar argument, we can show that $r_7 \colon KU_7(C\sp*\sr(\Lambda,\gamma)) \rightarrow KO_7(C\sp*\sr(\Lambda,\gamma))$ is onto. As $KU_7(C\sp*\sr(\Lambda, \gamma)) = {\ker \begin{pmatrix} B & B
\end{pmatrix} }/{\im\begin{pmatrix}
-B \\B
\end{pmatrix}}$,   $r_7$ is induced from $(r_{A^2})_6: \Z^6 \to \Z^2$, and $(r_{A^2})_6(x,y,z,u,v,w) = (z-y, w-v)$.  Therefore, using the generator $g$ of $KU_7(C\sp*\sr(\Lambda,\gamma))$ identified above, 
\[ r_6(g) = [(1, -1)] \in {\ker \begin{pmatrix}
n & n 
\end{pmatrix}}\left/{\im \begin{pmatrix}
-n \\ n
\end{pmatrix}}\right.= KO_6(C\sp*\sr(\Lambda, \gamma)) .\]
As $[(1,-1)]$ generates $KO_6(C\sp*\sr(\Lambda,\gamma))$, we conclude that $r_7$
 is also surjective.

Since $r_i$ is surjective for $i = 5,6,7$ it immediately follows from \eqref{eq:natural-transformations} that $\eta_i = 0$ for $i =5,6,7$.  

Now, we turn to the core of the exact sequence. Given that we've already computed $KU_i(C\sr^*(\Lambda,  \gamma)) = \Z_{2n}$ and $\psi_i = \pm 1$, Equation \eqref{eq:core-gps} implies that 
$MU_i(C\sr^*(\Lambda,  \gamma)) = {\Z_{2}}$ for all $i$. The fact that $\eta_i = 0$ for $5 \leq i \leq 7$ implies that 
$MO_i(C\sr^*(\Lambda,  \gamma)) = 0$ for $i = 0,6,7$. From this information, the long exact sequence \eqref{eq:core-exact-seq} relating $MO_i$ and $MU_i$ can be used to determine that $MO_*$ must be one of the following (using the same argument as used in the previous section). 

\[ \begin{array}{|c|c|c|c|c|c|c|c|c|c|}  
\hline  \hline 
  & \makebox[1cm][c]{0} & \makebox[1cm][c]{1} & 
\makebox[1cm][c]{2} & \makebox[1cm][c]{3} 
& \makebox[1cm][c]{4} & \makebox[1cm][c]{5} 
& \makebox[1cm][c]{6} & \makebox[1cm][c]{7} \\
\hline  \hline
MO_i
& 0 & \Z_2 & \Z_2^2 & \Z_2^2 & \Z_2^2 & \Z_2 & 0 &  0  \\
\hline  
\hline
\end{array} \]
or
\[ \begin{array}{|c|c|c|c|c|c|c|c|c|c|}  
\hline  \hline 
  & \makebox[1cm][c]{0} & \makebox[1cm][c]{1} & 
\makebox[1cm][c]{2} & \makebox[1cm][c]{3} 
& \makebox[1cm][c]{4} & \makebox[1cm][c]{5} 
& \makebox[1cm][c]{6} & \makebox[1cm][c]{7} \\
\hline  \hline
MO_i
& 0 & \Z_2 & \Z_2 & \Z_2^2 & \Z_2 & \Z_2 & 0 &  0  \\
\hline  
\hline
\end{array} \]

The former possibility coincides with the core of $P_{2n}$.  Comparing the group $(P_{2n})_3^O$ with the groups $(E^O)^3_{p,q}$ of the spectral sequence converging to $KO_*(C\sp*\sr(\Lambda, \gamma))$ for $p+q=3$, we see that this possibility coincides with the case when $d^2_{2,1} = 0$. Therefore, in that case
$K\crr(C\sr^*(\Lambda,  \gamma)) = P_{2n}$. The latter case occurs when $d^2_{2,1} \neq 0$ and yields 
$K\crr(C\sr^*(\Lambda,  \gamma)) = Q_{2n}$. 
\end{proof}

If the factorization rules of $\Lambda$ are such that $C\sp*(\Lambda)$ is simple and purely infinite, then the complex \calg ~$C\sp*(\Lambda)$ is isomorphic to a matrix algebra over $\mathcal{O}_{2n+1} \otimes \mathcal{O}_{2n+1}$. 
A little more work will be necessary to track the class of $[1]$ in $KU_0(C\sp*\sr(\Lambda, \gamma))$ to determine the value of $k$ in the isomorphism
$C\sp*(\Lambda) \cong M_k(\mathcal{O}_{2n+1} \otimes \mathcal{O}_{2n+1})$.
The real \calg ~$C\sp*\sr(\Lambda, \gamma)$ is then a real structure of $M_k(\mathcal{O}_{2n+1} \otimes \mathcal{O}_{2n+1})$ but is not isomorphic (nor stably isomorphic) to
$\mathcal{O}\pr_{2n+1} \otimes\sr \mathcal{O}\pr_{2n+1}$ or any other tensor product of real Cuntz algebras. This follows, for example, from the fact that $KO_7(C\sp*\sr(\Lambda, \gamma)) \cong \Z_2$, but $KO_7(-)$ is trivial for any tensor product of real Cuntz algebras. Furthermore, $C\sp*\sr(\Lambda, \gamma)$ is not isomorphic to any real $C\sp*$-algebra arising from a rank-1 graph with involution, as $KO_7(-)$ is torsion-free for such a $C\sp*$-algebra by Corollary~4.3 of \cite{boersema-MJM}.

\subsection{Another 3-vertex 2-graph} \label{SectionExample3}
In this section we examine another 2-graph for which the two adjacency matrices are not the same.
Fix an integer $n \geq 2$.  Let $\Lambda$ be a rank-2 graph with three vertices with adjacency matrices
\[ M_1 =  \begin{pmatrix} 1 & 1 & 1 \\ 1 & 0 & n-1 \\ 1 & n-1 & 0 \end{pmatrix} \quad \text{and} \quad 
	M_2 = \begin{pmatrix} 1 & 1 & 1 \\ 1 &  n-1 & 0 \\ 1 & 0 & n-1  \end{pmatrix} \]
 and with an involution $\gamma$ that swaps the second and third vertices.

\begin{prop}
Fix an integer $n \geq 2$.
If $n$ is odd, then $K\crr( C\sp* (\Lambda,  \gamma)) \cong S_2$ or $T_2$.
If $n$ is even, then
\[
K\crr( C\sp* (\Lambda,  \gamma))  \cong \Sigma K\crr( \mathcal{O}\pr_3) \oplus \Sigma^{-2} K\crr( \mathcal{O}\pr_3) 
 \cong \Sigma^{-1} K\crr( \mathcal{E}_3) \oplus  \Sigma^4 K\crr( \mathcal{E}_3) \; .  \]
\end{prop}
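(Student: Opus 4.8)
The plan is to run the same machine as in Sections~\ref{SectionExample1} and \ref{SectionExample2}. Since $\gamma$ fixes $v_1$ and interchanges $v_2$ and $v_3$, we put $G_f=\{v_1\}$, $G_1=\{v_2\}$, $G_2=\{v_3\}$, so the building block is $A=K\crr(\R)\oplus K\crr(\C)$ and Theorem~\ref{k=2--chaincomplex} produces the chain complex
\[ \mathcal{A}^{(0)}\colon\qquad 0\to A\xrightarrow{\partial_2} A^2\xrightarrow{\partial_1} A\to 0, \]
with $\partial_1=\smh{\rho^1}{\rho^2}$ and $\partial_2=\smv{-\rho^2}{\rho^1}$. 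First I would write out $\rho^1$ and $\rho^2$: because $M_1$ and $M_2$ are symmetric the complex parts are $B_i=I_3-M_i$, and the real parts in each degree follow from Figure~\ref{fig1} once the block data are recorded---for $\rho^1$ one has $B_{11}=0$, $B_{12}=B_{21}=-1$, $B_{22}=1$, $B_{23}=1-n$, while $\rho^2$ has the same $B_{11},B_{12},B_{21}$ but $B_{22}=2-n$ and $B_{23}=0$. The crucial observation is that $\rho^1$ and $\rho^2$ have identical real parts in degrees $0,1,4$ and differ only in degrees $2$ and $6$, where the distinguishing entry is multiplication by $n$ (for $\rho^1$) versus by $2-n$ (for $\rho^2$).

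Next I would compute the homology of $\mathcal{A}^{(0)}$ degree by degree, using Smith normal forms, to obtain the $E^2$ page. On the complex side one finds $\coker\smh{B_1}{B_2}\cong\Z_2$ and $\ker\smh{B_1}{B_2}/\im\smv{-B_2}{B_1}\cong\Z_2$, so $KU_j(C\sr^*(\Lambda,\gamma))\cong\Z_2$ for every $j$; in particular $C\sp*(\Lambda)$ is $KK$-equivalent to $\mathcal{O}_3\otimes\mathcal{O}_3$, and since every $KU_j$ is $\Z_2$ we automatically get $MU_j\cong\Z_2$ for all $j$ (one can also read off $\psi$ by tracing generators through $\mathcal{A}^{(0)}$, exactly as in Section~\ref{SectionExample2}). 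The real side is where the parity of $n$ enters: because $\gcd(n,2-n)=\gcd(n,2)$, the homology of the degree-$2$ and, especially, degree-$6$ strands of $\mathcal{A}^{(0)}$ is strictly larger when $n$ is even---the degree-$6$ strand has trivial homology for $n$ odd but contributes copies of $\Z_2$ for $n$ even---and this is ultimately what makes $KO_6$ and $KO_7$ nonzero in the even case.

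To finish, recall that $k=2$ forces $E^3=E^\infty$ and that the only possibly nonzero differential is the real-part map $d^2_{2,q}\colon(E^2_{2,q})\po\to(E^2_{0,q+1})\po$. I would then pass to the core: feeding the computed groups together with $\psi$ (and enough of $r,c,\eta$, obtained as in Section~\ref{SectionExample2}) into the core long exact sequence \eqref{eq:core-exact-seq} narrows the groups $MO_i$ to a short list, exactly as in Sections~\ref{SectionExample1} and \ref{SectionExample2}. For $n$ odd the surviving ambiguity is precisely the $S_2$-versus-$T_2$ alternative of Proposition~\ref{prop:one-vertex-2-graph}: the spectral sequence does not determine $d^2_{2,q}$, and the two choices realize the cores of $S_2$ and $T_2$ respectively. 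For $n$ even the extra $\Z_2$'s coming from the degree-$6$ strand rigidify the core so that it---and hence, by \cite[Theorem~4.2.1]{hewitt}, $K\crr(C\sr^*(\Lambda,\gamma))$ itself---is uniquely determined; comparing the resulting group data and core identifies it with $\Sigma K\crr(\mathcal{O}\pr_3)\oplus\Sigma^{-2}K\crr(\mathcal{O}\pr_3)$. The second displayed isomorphism I would verify separately, by writing out both $\Sigma K\crr(\mathcal{O}\pr_3)\oplus\Sigma^{-2}K\crr(\mathcal{O}\pr_3)$ and $\Sigma^{-1}K\crr(\mathcal{E}_3)\oplus\Sigma^{4}K\crr(\mathcal{E}_3)$ from the $\CR$-modules $K\crr(\mathcal{O}\pr_3)$ and $K\crr(\mathcal{E}_3)$ of \cite[Theorem~11.1]{brs} and matching cores via \cite[Theorem~4.2.1]{hewitt}.

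The main obstacle is the case analysis inside the core: as in Section~\ref{SectionExample1}, running \eqref{eq:core-exact-seq} to pin down exactly how much ambiguity survives---none when $n$ is even, the $S_2/T_2$ choice when $n$ is odd---is the delicate step, now complicated by the extra degree-$6$ contributions that must be tracked. A secondary difficulty is that the direct-sum description in the even case is not a visible splitting of $\mathcal{A}^{(0)}$, since the maps $\rho^i$ entangle the $K\crr(\R)$ and $K\crr(\C)$ summands of $A$ through complexification; the decomposition must therefore be recognized after the fact, at the level of the computed $\CR$-module and its core, rather than derived structurally.
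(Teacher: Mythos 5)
Your setup (the chain complex of Theorem \ref{k=2--chaincomplex}, the complex-part computation giving $KU_j\cong\Z_2$ for all $j$, the observation that only the degree-$6$ strand distinguishes the parities and produces the extra $\Z_2$'s making $KO_6,KO_7$ nonzero for $n$ even, and the reduction of the odd case to the $S_2$/$T_2$ alternative of Section \ref{SectionExample1}) matches the paper. The genuine gap is in the even case: you assert that ``the extra $\Z_2$'s rigidify the core so that it is uniquely determined,'' but the mechanism you point to --- running the core sequence \eqref{eq:core-exact-seq} ``exactly as in Sections \ref{SectionExample1} and \ref{SectionExample2}'' --- cannot by itself deliver uniqueness: in both of those sections that analysis terminates with \emph{two} admissible $MO_*$ tables, and the ambiguity (equivalently, the value of $d^2_{(2,1)}$) is never resolved. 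What actually closes the even case in the paper is a different piece of bookkeeping: from $E^\infty$ one knows $KO_i\cong\Z_2$ for $i=5,6,7,0$ and $|KO_1|=|KO_4|=4$, and then the long exact sequence \eqref{eq:CR-LES} together with the relations \eqref{eq:natural-transformations} (in particular $\eta^3=0$, applied after showing $\eta_7$ and $\eta_0$ are injective --- which is exactly where the new groups $KO_6,KO_7\cong\Z_2$ enter) forces $|\ker\eta_1|=2$ and hence $|KO_2|=|KO_3|=4$. This simultaneously shows $d^2_{(2,1)}\neq 0$ and determines all the maps $\eta,r,c$, yielding an $MO_*$ table with $MO_0,MO_6\neq 0$, which differs from the cores of both $S_2$ and $T_2$ and matches $\Sigma K\crr(\mathcal{O}_3\pr)\oplus\Sigma^{-2}K\crr(\mathcal{O}_3\pr)$. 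Without an argument of this kind (or an explicit generator-tracing computation of enough of $r,c,\eta$ in degrees $5$ through $0$), the claim that the ambiguity is ``none when $n$ is even'' is exactly the hard step, and your proposal leaves it unproved.

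Two smaller corrections: the homology of the degree-$2$ strand does \emph{not} depend on the parity of $n$ --- in both cases it is $\Z_2,\ \Z_2\oplus\Z_{?}$... more precisely $E^2_{0,2}\cong\Z_2$, $E^2_{1,2}\cong\Z_2^2$, $E^2_{2,2}\cong\Z_2$ for every $n\geq 2$ --- so only the degree-$6$ strand distinguishes the two cases (your conclusion is unaffected, but the stated reason is partly wrong). Also, since $KU_i(C\sp*\sr(\Lambda,\gamma))\cong\Z_2$, the involution $\psi$ is automatically the identity (as $\psi^2=\id$ on a group with trivial automorphism group), so no generator-tracing is needed for $\psi$ here; the tracing effort, if you use it, should instead be spent on the maps $r$ and $\eta$ in the high degrees.
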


We find it intriguing that for $n$ even, the choice of $n$ has no impact on the $\CR$ $K$-theory groups of the real $C\sp*$-algebra.
For all odd integers $n$, there are only two possible $\CR$-modules that can be realized by $K\crr(C\sp*(\Lambda, \gamma))$.

\begin{proof}
Again, we use Theorem \ref{k=2--chaincomplex}. The chain complex $\mathcal A^{(0)}$ used to build the spectral sequence to compute $K\crr( C\sp*(\Lambda,  \gamma))$ has the following components,
based on the matrices below:
\[ \rho^1 = B_1 = I_3 - M_1  =  \begin{pmatrix} 0 & -1 & -1 \\ -1 & 1 & 1-n \\ -1 & 1-n & 1 \end{pmatrix} \quad \text{and} \quad 
	\rho^2 = B_2 = I_3 - M_2 = \begin{pmatrix} 0 & -1 & -1 \\ -1 &  2-n & 0 \\ - 1 & 0 & 2-n  \end{pmatrix} .\]
%for $n \geq 2$.

\begin{align*}
~& \text{ \underline{complex part:}} \\
0 : \quad & 0 \rightarrow \Z^3 
	\xrightarrow{\begin{pmatrix}  -B_2 \\ B_1 \end{pmatrix} }   \Z^6 
	\xrightarrow{\begin{pmatrix} B_1 & B_2 \end{pmatrix}}  \Z^3 \rightarrow 0 \\
~\\
%%%%
~& \text{ \underline{ real part:}} \\
0 : \quad & 0 \rightarrow \Z^2 
	\xrightarrow{\begin{pmatrix} 0 & 2 \\ 1 & n-2 \\ 0 & -2 \\ -1 & 2-n \end{pmatrix}}   \Z^4 
	\xrightarrow{\begin{pmatrix} 0 & -2 & 0 & -2 \\ -1 & 2-n & -1 & 2-n \end{pmatrix}}  \Z^2 \rightarrow 0 \\
%%%%
1: \quad & 0 \rightarrow \Z_2 \xrightarrow{\begin{pmatrix} 0 \\ 0 \end{pmatrix}} 
	\Z_2^2 \xrightarrow{\begin{pmatrix} 0 & 0 \end{pmatrix}} \Z_2 \rightarrow 0
	 \\ % corresponds to fixed vertices
%%%%%%
2: \quad & 0 \rightarrow 
	( \Z_2 \oplus \Z) 
	\xrightarrow{\begin{pmatrix} 0 & 1 \\ 0 & n-2 \\ 0 & -1 \\ 0 & n \end{pmatrix}}
	(\Z_2 \oplus \Z)^2 
	\xrightarrow{\begin{pmatrix} 0 & -1 &0 & -1 \\ 0 & n & 0 & 2-n  \end{pmatrix}}
	(\Z_2 \oplus \Z)
	\rightarrow 0 \\
%%%%%%
%%%%%
4 : \quad & 0 \rightarrow \Z^2 
	\xrightarrow{\begin{pmatrix} 0 & 1 \\ 2 & n-2 \\ 0 & -1 \\ -2 & 2-n \end{pmatrix}}   \Z^4 
	\xrightarrow{\begin{pmatrix} 0 & -1 & 0 & -1 \\ -2 & 2-n & -2 & 2-n \end{pmatrix}}  \Z^2 \rightarrow 0 \\
%%%%%%
%%%%%
6: \quad  & 0 \rightarrow \Z \xrightarrow{ \begin{pmatrix} n-2 \\ n \end{pmatrix} } \Z^2 \xrightarrow{ \begin{pmatrix} n & 2-n \end{pmatrix} }  \Z \rightarrow 0 \\
%%%%%%
\end{align*} 

The Smith normal forms of the matrices in the complex part of the chain complex are 
\[ \snf(B_1, B_2) = \begin{pmatrix} 1 & 0 & 0 & 0 & 0 & 0 \\ 0 & 1 & 0 & 0 & 0 & 0 \\ 0 & 0 & 2 & 0 & 0 & 0 \end{pmatrix}  
\quad \text{and} \quad \snf\left( \begin{matrix} - B_2\\ B_1 \end{matrix} \right)  
	= \begin{pmatrix} 1 & 0 & 0 \\ 0 & 1 &0 \\ 0 & 0 & 2 \\ 0 & 0 & 0 \\ 0 & 0 & 0 \\ 0 & 0 & 0 \end{pmatrix} \]
Then we have 
$\text{coker}\, (B_1 \, B_2) \cong \Z_{2}$,
$\ker (B_1 \, B_2) / \im \begin{pmatrix}-B_2 \\ B_1 \end{pmatrix} \cong \Z_2$, 
and $\ker  \begin{pmatrix} -B_2 \\ B_1  \end{pmatrix} = 0$.
Thus, the $E^2$ groups of the chain complex computing $KU_*(C\sp*\sr(\Lambda,\gamma))$ satisfy 
\[ (E^2_{p,q})^U = \begin{cases}
\Z_2 & p\in \{ 0,1\}, q \text{ even}\\
0 & \text{otherwise.}
\end{cases}\]
Consequently, $KU_0(C\sp*\sr(\Lambda,\gamma)) \cong KU_1(C\sp*\sr(\Lambda, \gamma)) \cong \Z_{2}$.
It follows that the complex $C\sp*$-algebra $C\sp*(\Lambda)$ has the same $K$-theory as
$\mathcal{O}_{3} \otimes \mathcal{O}_{3}$.

From the chain complexes exhibited above, we can compute that the $E^2$ page of the spectral sequence computing 
$KO_*(C\sp*\sr(\Lambda, \gamma))$ is  given by one of the following tables. The left-hand table corresponds to the case where $n$ is odd and the right-hand table corresponds to the case where $n$ is even.  It is a remarkable fact that, even though the matrices $\rho^i$ look very different when $n=2$, the $E^2$ page in this case is the same as for any other even $n$.

\begin{gather*}
\text{ \underline{ real part of $E^2_{p,q} $ }}\\
\def\vvline{\hfil\kern\arraycolsep\vline\kern-\arraycolsep\hfilneg}
\begin{array}{ cccc }
 \multicolumn
{4}{c}{ \underline{ \text{case 1: $n$ is odd}}} \\
\vspace{.25cm} \\
~ \vvline& \hspace{.3cm} \vdots \hspace{.3cm} &\hspace{.3cm} \vdots \hspace{.3cm} & \hspace{.3cm} \vdots \hspace{.3cm} \\
7  \vvline & 0 & 0 & 0  \\
6   \vvline  &  0 & 0 & 0  \\
5   \vvline  &  0    &  0   &  0    \\
4   \vvline  &  \Z_{2} & \Z_{2}  & 0   \\
3    \vvline &  0  & 0 &  0   \\
 2   \vvline  & \Z_2  &  \Z_2^2   & \Z_2  \\
  1   \vvline  &   \Z_2  &  \Z_2^2   & \Z_2  \\
0 \vvline & \Z_2 & \Z_2 & 0 \\ \hline
~ \vvline & 0 & 1 & 2
\end{array}
\hspace{2cm}
\begin{array}{ cccc }
 \multicolumn
{4}{c}{ \underline{ \text{case 2: $n$ is even}}} \\
\vspace{.25cm} \\
~ \vvline & \hspace{.3cm} \vdots \hspace{.3cm} &\hspace{.3cm} \vdots \hspace{.3cm} & \hspace{.3cm} \vdots \hspace{.3cm} \\
7  \vvline & 0 & 0 & 0  \\
6   \vvline  &  \Z_{2} &  \Z_{2}  & 0 \\
5   \vvline  &  0    &  0   &  0    \\
4   \vvline  &  \Z_{2} & \Z_{2}  & 0   \\
3    \vvline &  0  & 0 &  0   \\
 2   \vvline  & \Z_2  &  \Z_2^2   & \Z_2  \\
  1   \vvline  &   \Z_2  &  \Z_2^2   & \Z_2  \\
0 \vvline & \Z_2 & \Z_2 & 0 \\ \hline
~ \vvline & 0 & 1 & 2
\end{array}
\end{gather*}

As an illustration of how this spectral sequence was obtained, we explain the computations of $E^2_{p, 2}$ for $p = 0,1,2$.  We have 

\[E^2_{0,2} = \coker (\partial_1)_2 =
 \coker \begin{pmatrix}
0 & -1 & 0 & -1 \\ 0 & n & 0 & 2-n
\end{pmatrix} =  (\Z_2 \oplus \Z)/G \]
where $G$ is the subgroup of $\Z_2 \oplus \Z)$ generated by $([1], n)$ and by $([0], 2)$.
This in turn is isomorphic to $(\Z_2 \oplus \Z_2)/G'$ where $G'$ is the subgroup of $\Z_2 \oplus \Z_2$ generated by $( [1], [n])$.
Note that in $\Z_2 \oplus \Z_2$, $([1], [n])$ is equal to $([1], [0])$ or $([1], [1])$, depending on whether $n$ is even or odd. In either case, the quotient is isomorphic to $\Z_2$.

To compute $E^2_{1,2} = \ker (\partial_1)_2/\im (\partial_2)_2 $, we first compute that 
\begin{align*}
%\ker \partial_1 &= 
\ker \begin{pmatrix}
0 & -1 & 0 & -1 \\ 0 & n & 0 & 2-n \end{pmatrix} 
&= \{ ([x], y, [z], w) \mid  y + w \equiv 0 \pmod 2, ~n y =  (n-2) w \} \; \\
&= \{ ([x], k(n-2), [z], kn) \mid  k \in \Z\}.
\end{align*}
Now, observe that 
\[ \im ( \partial_2)_2 = \im \begin{pmatrix}
0 & 1 \\ 0 & n-2 \\ 0 & -1 \\ 0 & n  
\end{pmatrix} = \{ (  [x], (n-2) x, [x], n x ) \mid x \in \Z\}
\subseteq (\Z_2 \oplus \Z)^2 \; . \]
Thus a generic element  in 
$E^2_{1,2}$ can be written as
\[ [ ([x], y, [z], w) ] = [ ([x], k(n-2), [z], kn ) ] = [( [k+x],0, [k + z], 0 )] \; ,  \]
since $([1], n-2, [1], n) \in \im (\partial_2)_2$ and hence
$[(0, n-2, 0, n)] = [( [1], 0, [1], 0 )]$.
It now follows easily that $E^2_{1,2} \cong \Z_2^2$.

Finally, 
\[E^2_{2,2} = \ker (\partial_2)_2 =  \ker \begin{pmatrix}
0 & 1 \\ 0 & n-2 \\ 0 & -1 \\ 0 & n  
\end{pmatrix} = \{ ([x], y) \mid y = 0 \}
\cong \Z_2 \; . \]

Leaving it to the reader to calculate $E^2_{p,q}$ for the remaining values of $p,q$, we turn now to analyze the spectral sequence from the $E^2$ stage.
In the case where $n$ is odd, the spectral sequence is the same as one that we saw in Section~\ref{SectionExample1}, 
so, as there,  we can conclude that either 
$K\crr( C\sp* (\Lambda,  \gamma)) \cong S_2$ (if $d_{(2,1)}^2 \not= 0$) or $K\crr( C\sp* (\Lambda,  \gamma)) \cong T_2$ (if $d^2_{(2,1)}=0$).

In the case where $n$ is even, there is some work to do. Observe first that once again, we have $d^2_{(i,j)} =0 $ for all $(i,j)$ with the possible exception of $d^2_{(2,1)}$. Thus, for all $(i,j) \not\in \{ (0,2), (2,1)\}$ we have $E^2_{ij} = E^\infty_{ij}$.  It follows that 
$KO_i( C\sp* (\Lambda,  \gamma))
	\cong \Z_2$ for $i = 5,6,7,0$, and
that $|KO_i( C\sp* (\Lambda,  \gamma))| = 4$ for $i = 1,4$.
If $d^2_{(2,1)}=0$ then 
$|KO_2 (C\sp* \sr(\Lambda,  \gamma))| = |KO_3(C\sp*\sr (\Lambda,  \gamma))| = 8$, and if $d^2_{(2,1)} \not= 0$ then $KO_2(C\sp*\sr(\Lambda, \gamma)) \cong KO_3(C\sp*\sr (\Lambda,\gamma)) \cong \Z^2_2$.

Now we will determine the maps $\eta$, $r$, and $c$, and also determine that $|KO_2 (C\sp*\sr (\Lambda,  \gamma))| = |KO_3(C\sp*\sr (\Lambda,  \gamma))| = 4$ 
(and hence that $d^2_{(2,1)} \neq 0$).
We start with the following segments of the long exact sequence \eqref{eq:CR-LES} relating $KO_*(C\sp*\sr (\Lambda,  \gamma))$ and $KU_*(C\sp* \sr(\Lambda,  \gamma))$:
\begin{align*} &KO_0(C\sp*\sr (\Lambda,  \gamma)) \xrightarrow{\eta_0} 
KO_1(C\sp*\sr (\Lambda,  \gamma)) \xrightarrow{c_1}
KU_1(C\sp*\sr (\Lambda,  \gamma)) \\
\text{and~~}
&KU_4(C\sp*\sr (\Lambda,  \gamma)) \xrightarrow{r_4}
KO_4(C\sp*\sr (\Lambda,  \gamma)) \xrightarrow{\eta_4} 
KO_5(C\sp*\sr (\Lambda,  \gamma)) \; .
\end{align*}
Since $|KO_1(C\sp*\sr (\Lambda,  \gamma))| = 4$ and $KO_0(C\sp*\sr (\Lambda,  \gamma)) \cong KU_2(C\sp* (\Lambda,  \gamma)) \cong \Z_2$ it follows that $\eta_0$ must be injective and $c_1$ must be surjective. Similarly $r_4$ must be injective and $\eta_4$ must be surjective.

Since $\eta_0$ is injective,  $r_0 = 0$ and $c_2$ is surjective. Since $c_1$ is surjective,  $r_7$ must be 0.
Continuing to use the long exact sequence \eqref{eq:CR-LES}, 
we find that 
\[ 
r_7 = 0, \quad  
\eta_7 = 1, \quad c_0 = 0, \quad r_6 = 1, \quad \eta_6 = 0, \quad c_7 = 1, \quad r_5 = 0, \quad  \eta_5 = 1, \quad c_6 = 0.\]
Also, since $\eta_4$ is surjective, we know that $c_5 = 0$.

Now $\eta_7$ and $\eta_0$ are both injective. It follows that $\eta_1: KO_1(C\sp*\sr(\Lambda, \gamma)) \to KO_2(C\sp*\sr(\Lambda, \gamma))$ cannot be injective also, due to the relation $\eta^3 = 0$.
So $| \ker \eta_1 |$ is either equal to 2 or to 4. But $| \ker \eta_1 | = | \im r_1 |$ and the latter cannot be equal to 4, since $KU_1(C\sp*\sr (\Lambda, \gamma) = \Z_2$. Thus $| \ker \eta_1 | = 2$.
Then the exact sequence
\[ 0 \rightarrow KO_1(C\sp*\sr (\Lambda,  \gamma))/\ker \eta_1 \xrightarrow{\eta_1} KO_2(C\sp*\sr(\Lambda,  \gamma)) \xrightarrow{c_2} KU_2(C\sp*\sr (\Lambda,  \gamma)) \rightarrow 0 \; \]
implies that $|KO_2(C\sp*\sr (\Lambda,  \gamma))| = 4$ (since the groups on the left and the right each have order 2). 
Thus $r_1$ is an injection and $c_3 = 0$. 
Moreover, the fact that $|KO_2(C\sp*\sr(\Lambda, \gamma))| \not= 8$ implies that $d^2_{(2,1)}\not= 0$ and consequently   $KO_2(C\sp*\sr(\Lambda, \gamma)) \cong \Z_2^2 \cong KO_3(C\sp*\sr(\Lambda, \gamma))$.  As $c_3 = 0$, we must have $\eta_2: KO_2(C\sp*\sr(\Lambda, \gamma)) \to  KO_3(C\sp*\sr(\Lambda, \gamma))$ onto, which implies that $\eta_2$ is an isomorphism and consequently $r_2=0$ and $c_4$ is onto.

As $\eta_2$ and $\eta_4$ are both surjective, the relation $\eta^3 = 0$  implies that $\eta_3:\Z_2^2 \to KO_4(C\sp*\sr(\Lambda, \gamma))$ cannot be surjective.
So $| \im \eta_3 |$ is equal to 0 or to 2. But if $| \im \eta_3 | = 0$, then $c_4: KO_4(C\sp*\sr(\Lambda, \gamma)) \to KU_4(C\sp*\sr(\Lambda,\gamma)) \cong \Z_2$ would be injective, which is not possible. 
Therefore $| \im \eta_3 | = | \ker c_4 | = 2$.

Now, we have calculated all of the groups $KO_*(C\sp*\sr (\Lambda,  \gamma))$, at least up to order, and the action of the maps $\eta, r, c$.  This enables us to compute the core of the $\CR$-module $K\crr(C\sp* \sr(\Lambda,  \gamma))$.

Recall from \eqref{eq:natural-transformations} that $\psi: KU_*(C\sp*\sr(\Lambda, \gamma))  \to KU_*(C\sp*\sr(\Lambda, \gamma)) $ satisfies $\psi^2 = 1$.  As $KU_i(C\sp*\sr(\Lambda, \gamma)) \cong \Z_2$ for each $i$, we may conclude that $\psi = 1$ for all $i$ in this case.  
It follows that $MU_i = (\ker(1 - \psi_i))/(\text{image}(1+\psi_i)) = KU_i(C\sp*\sr(\Lambda, \gamma))  \cong \Z_2$ for all $i$.

Furthermore, our descriptions of the maps $\eta_i$ above reveal that the groups $MO_i = \im \eta_{i-1}$ are as follows.
\[ \begin{array}{|c|c|c|c|c|c|c|c|c|c|}  
\hline  \hline 
  & \makebox[1cm][c]{0} & \makebox[1cm][c]{1} & 
\makebox[1cm][c]{2} & \makebox[1cm][c]{3} 
& \makebox[1cm][c]{4} & \makebox[1cm][c]{5} 
& \makebox[1cm][c]{6} & \makebox[1cm][c]{7} \\
\hline  \hline
MO_i
& \Z_2 & \Z_2 & \Z_2 & \Z_2^2 & \Z_2 & \Z_2 & \Z_2 &  0  \\ %\hline 
%MU_i
%& \Z_2 & \Z_2 & \Z_2 & \Z_2 & \Z_2 & \Z_2 & \Z_2 & \Z_2 \\
\hline  
\hline
\end{array} \]

Now from the $K$-theory calculations in \cite[Section~5.1, Table~5]{boersema2002} and \cite[Section~11, Table~2]{brs}, we find the the core of $K\crr( \mathcal{O}\pr_3) \cong K\crr( \mathcal{E}_3)$ is given by

\[ \begin{array}{|c|c|c|c|c|c|c|c|c|c|}  
\hline  \hline 
  & \makebox[1cm][c]{0} & \makebox[1cm][c]{1} & 
\makebox[1cm][c]{2} & \makebox[1cm][c]{3} 
& \makebox[1cm][c]{4} & \makebox[1cm][c]{5} 
& \makebox[1cm][c]{6} & \makebox[1cm][c]{7} \\
\hline  \hline
MO_i
& 0 & \Z_2 & \Z_2 & \Z_2 & \Z_2 & 0 & 0 &  0  \\ \hline 
MU_i
& \Z_2 & 0 & \Z_2 & 0 & \Z_2 & 0 & \Z_2 & 0 \\
\hline  
\hline
\end{array} \]

By comparing cores, we conclude that
\begin{align*}
K\crr( C\sp*\sr (\Lambda, \gamma)) & \cong \Sigma K\crr( \mathcal{O}\pr_3) \oplus \Sigma^{-2} K\crr( \mathcal{O}\pr_3) \\
& \cong \Sigma^{-1} K\crr( \mathcal{E}_3) \oplus  \Sigma^4 K\crr( \mathcal{E}_3)  \; .
\end{align*} 
That is,
\[ \begin{array}{|c|c|c|c|c|c|c|c|c|c|}  
\hline  \hline 
& \makebox[1cm][c]{0} & \makebox[1cm][c]{1} & 
\makebox[1cm][c]{2} & \makebox[1cm][c]{3} 
& \makebox[1cm][c]{4} & \makebox[1cm][c]{5} 
& \makebox[1cm][c]{6} & \makebox[1cm][c]{7} \\
\hline  \hline
KO_i(C\sp*\sr(\Lambda, \gamma))
& \Z_2 & \Z_4 & \Z_2^2  & \Z_2^2   & \Z_{4} & \Z_2  &  \Z_2 & \Z_2  \\
\hline  
KU_i(C\sp*\sr(\Lambda, \gamma))
& \Z_{2} & \Z_{2} & \Z_{2} & \Z_{2} & \Z_{2} & \Z_{2} & \Z_{2} & \Z_{2}  \\
\hline  
\hline
\end{array}\]

\end{proof}

\section{Questions}
\label{sec:questions}

Our investigation has highlighted many unanswered questions about higher-rank graph $C\sp*$-algebras and about the spectral sequence of Theorem \ref{sp-seq1} that computes $K\crr_*(C\sp*\sr(\Lambda, \gamma))$. First of all, Theorem \ref{sp-seq1} gives no information about the differential $d^r$ of the spectral sequence. How is this map determined by the higher-rank graph with involution $(\Lambda, \gamma)$? Can we compute $d^r$ from the combinatorial data -- the adjacency matrices, the factorization rule, the involution -- of $(\Lambda, \gamma)$? 

A related question is to better understand the role that $\gamma$ plays in these constructions. In setting up the spectral sequence, it is important to know which vertices are fixed and which are not fixed by $\gamma$. But beyond that, the action of $\gamma$ on the edges %(of non-zero degree) 
does not seem to play a role (unless it plays a role in determining the differential maps $d^r$ in a way that we are not aware of -- see the previous paragraph).

In fact, we know from \cite[Theorem 2.4]{boersema-MJM} that in the case of a 1-graph the isomorphism class of the real $C\sp*$-algebra $C\sp*\sr(\Lambda, \gamma)$ may depend on the action of $\gamma$ on the vertices of $\Lambda$ but not on the way $\gamma$ acts on the edges of $\Lambda$. We found that the proof of this theorem does not extend in an obvious way to the case of $k$-graphs where $k \geq 2$, but on the other hand we have no counter-examples to the analogous statement. How does $\gamma$ affect the $K$-theory of $C\sp*\sr (\Lambda, \gamma)$ and indeed how does $\gamma$ affect the isomorphism class of $C\sp*\sr(\Lambda, \gamma)$?

Another question concerns the functoriality of the spectral sequence. Specifically, suppose that $(\Lambda, \gamma)$ is a rank-$k$ graph with involution. Then for any $0 \leq \ell \leq k$, there is an obvious rank-$\ell$ graph $(\Lambda', \gamma')$ with involution:
\[ \Lambda' = \{ \lambda \in \Lambda: d(\lambda \in \N^\ell  = \{ (x_1, \dots, x_\ell, 0, \dots, 0) \mid x_i \in \N \} \subseteq \N^k \}\; .\] 
We define $\gamma'$ to be the restriction of $\gamma$.
There is an obvious corresponding map $i \colon C\sp*\sr(\Lambda', \gamma') \rightarrow C\sp*\sr(\Lambda, \gamma)$, which induces   a map  on $K$-theory:
\[ i_* \colon K\crr(C\sp*\sr(\Lambda', \gamma')) \rightarrow K\crr(C\sp*\sr(\Lambda, \gamma)) \; . \]
On the purely algebraic level, there is consequently  a homomorphism
from the chain complex associated to $C\sp*\sr(\Lambda', \gamma')$ to that associated to $C\sp*\sr(\Lambda, \gamma)$ that commutes with the differentials $\partial$. We conjecture that this map on the level of the chain complexes induces a map on the level of spectral sequences which commutes with the differentials $d^r$ and that it converges in the appropriate sense to the map $i_*$ on $K$-theory.

In particular, taking $\ell = 0$, this conjecture will provide a way to identify the class of any projection $[p_v]$ in $KO_0(C\sp*\sr(\Lambda, \gamma) )$ when $v$ is a vertex in $\Lambda$ fixed by $\gamma$, or the class of $[p_v + p_{\gamma(v)}]$ when $v$ is not fixed by $\gamma$. It would also provide a way to identify the class of the identity in $KO_0(C\sp*\sr(\Lambda, \gamma) )$ when $\Lambda$ is finite, which is part of the Elliot invariant when $C\sp*\sr(\Lambda, \gamma) )$ is simple and purely infinite. Such a result would be a direct generalization of Theorem~4.5 of \cite{boersema-MJM} and Theorem~3.2 of \cite{raeburn-szyman}. 

Finally, we wonder if our spectral sequence can be used to characterize the $\CR$-modules that can arise as $K\crr(C\sp*\sr(\Lambda, \gamma))$ where $(\Lambda, \gamma)$ is a rank $k$-graph. Corollary~4.3 of  \cite{boersema-MJM}  gives a necessary condition for a given $\CR$-module to be isomorphic to $K\crr(C\sp*\sr(\Lambda, \gamma))$, but we do not have a complete characterization, even when $\Lambda$ is a rank-$1$ graph. Which real Kirchberg algebras can be realized as $C\sp*\sr(\Lambda, \gamma)$ for some directed graph with involution $(\Lambda, \gamma)$? More generally, which real Kirchberg algebras can be realized as $C\sp*\sr(\Lambda, \gamma)$ for some higher rank graph with involution $(\Lambda, \gamma)$? In particular, the original question that motived this work is still unanswered: can we find concrete representations of the exotic real Cuntz algebras $\mathcal{E}_n$ using a family of higher-rank graphs with involution?

\bibliographystyle{amsalpha}
\bibliography{eagbib}
\end{document}